\colorlet{darkblue}{blue!70!black}
\colorlet{darkred}{red!70!black}
\colorlet{darkgreen}{green!70!black}
\colorlet{darkwhite}{white!65!black}
\colorlet{darkmagenta}{magenta!70!black}
\colorlet{pink}{green!20!magenta}
\colorlet{lightcyan}{cyan!15!white}
\colorlet{lightyellow}{yellow!30!white}
\colorlet{lightcyanb}{lightcyan!50!black}
\newenvironment{tric}
    {\begin{tikzpicture}[scale= 0.5,line width=0.35mm,draw=darkblue,double distance=0.35mm,
        baseline={([yshift=-.8ex]current bounding box.center)}] }
    {\end{tikzpicture}}
\newtheorem{thm}{Theorem}[section]
\newtheorem{lemma}[thm]{Lemma}
\newtheorem{proposition}[thm]{Proposition}
\newtheorem{notation}[thm]{Notation}
\theoremstyle{definition}
\newtheorem{definition}[thm]{Definition}
\newtheorem{defn}[thm]{Definition}
\theoremstyle{remark}
\newtheorem{remark}[thm]{Remark}
\def\C{{\mathbb C}}
\newcommand{\FRep}{\mathbf{FundRep}}
\newcommand{\Web}{\mathbf{Web}_q}
\newcommand{\spn}[1][2n]{\mathfrak{sp}_{#1}}
\newcommand{\End}{{\rm End}}
\newcommand{\klabel}{k}
\newcommand{\PowerSum}{Q_{2n}^{(N,k)}}
\title{Type $C$ skein modules and transparent elements}
\date{}
\author[Higgins]{Vijay Higgins}
\address{Department of Mathematics, University of California, Los Angeles, CA 90095, USA}
\email{higginsv@math.ucla.edu}
\author[Wu]{Haihan Wu}
\address{Department of Mathematics, Johns Hopkins University, Baltimore, MD 21218 USA}
\email{hwu125@jhu.edu}
\thanks{
	2020 {\em Mathematics Classification:} Primary 57K31. Secondary 17B37.\\
	{\em Key words and phrases: Skein modules at roots of unity, webs, power sum polynomials}}
\begin{document}

\begin{abstract}
We study skein modules using $Sp(2n)$ webs. We define multivariable analogues of Chebyshev polynomials in the Type $C$ setting and use them to construct transparent elements in the skein module at roots of unity. Our arguments are diagrammatic and make use of an explicit braiding formula for $1$ and $k$ labeled strands and an analogue of Kuperberg's tetravalent vertex in the annular setting.
\end{abstract}

\maketitle

\tableofcontents

\section{Introduction}

\def\Braid
{\begin{tric}
\draw (-1.5,-1.5)--(1.5,1.5);
\draw[double=darkblue,ultra thick,white,line width=3pt] (-1.5,1.5)--(1.5,-1.5);
\draw (-1.5,-1.5) node[below,black,scale=0.7]{$1$};
\draw (1.5,-1.5) node[below,black,scale=0.7]{$k$};
\end{tric}}

\def\Braida
{\begin{tric}
\draw[scale=0.7] (-1.732,-2)--(0,-1)--(1.732,-2) (1.732,2)--(0,1)--(-1.732,2) 
(-1.732,-2)node[below,black,scale=0.7]{$1$}
(1.732,-2)node[below,black,scale=0.7]{$k$}
(1.732,2)node[above,black,scale=0.7]{$1$}
(-1.732,2) node[above,black,scale=0.7]{$k$}
(0,-1)--(0,1)node[left,midway,black,scale=0.7]{$k+1$};
\end{tric}
}

\def\Braidb
{\begin{tric}
\draw[scale=0.7] (-2,-1.732)--(-1,0)--(-2,1.732) (2,1.732)--(1,0)--(2,-1.732) 
(-2,-1.732)node[below,black,scale=0.7]{$1$}
(-2,1.732)node[above,black,scale=0.7]{$k$}
(2,1.732)node[above,black,scale=0.7]{$1$}
(2,-1.732) node[below,black,scale=0.7]{$k$}
(-1,0)--(1,0)node[below,midway,black,scale=0.7]{$k-1$};
\end{tric}
}

\def\Braidc
{\begin{tric}
\draw[scale=0.7] (-1.732,-2)--(0,-1)--(1.732,-2) (1.732,2)--(0,1)--(-1.732,2) 
(-1.732,-2)node[below,black,scale=0.7]{$1$}
(1.732,-2)node[below,black,scale=0.7]{$k$}
(1.732,2)node[above,black,scale=0.7]{$1$}
(-1.732,2) node[above,black,scale=0.7]{$k$}
(0,-1)--(0,1)node[left,midway,black,scale=0.7]{$k-1$};
\end{tric}
}

Associated to each Dynkin diagram is a quantum group $U_q(\mathfrak{g}).$ The category of representations of $U_q(\mathfrak{g})$ admits a diagrammatic description, as explained by Reshetikhin and Turaev \cite{RT90}. These diagrammatic descriptions have been explored to varying levels of detail, depending on $\mathfrak{g}.$ In \cite{Kup94,Kup96}, Kuperberg initiated the program to provide explicit combinatorial descriptions of representation categories by using diagrams called webs, which are graphs such that each edge carries the label of a fundamental representation of $U_q(\mathfrak{g}).$

Webs and their skein relations are powerful objects when viewed either as topological or algebraic tools. They contain enough topological data to define quantum invariants of links and 3-manifolds  while containing enough algebraic data to recover the representation category of the corresponding quantum group. Consequently, the combinatorial web categories provide for powerful constructions which are intermediate between the representation category of $U_q(\mathfrak{g})$ and the topological invariants that they define. For example, the web category constitutes the underlying 1-category inside of the 2-category of foams \cite{LQR15,QR16}. For another example, and which is the focus of this paper, the skein algebra of webs on a surface associates an algebra to a surface which is related to both the character variety of the surface and the corresponding quantum group itself.

For the case $\mathfrak{g}=\mathfrak{sl}_2,$ there is only one fundamental representation of $U_q(\mathfrak{sl}_2)$ and the webs do not require vertices. The corresponding combinatorial category is the ubiquitous Temperley-Lieb category. The problem of describing the combinatorial web category for each $\mathfrak{g}$ is an ongoing and active program. Kuperberg described the webs for each $\mathfrak{g}=\mathfrak{sl}_3,\mathfrak{sp}_4,\mathfrak{g}_2$. In type $A,$ the webs were described by \cite{MOY98,Sik05,Mor07,CKM14}. Webs for type $C$ were developed by Bodish-Elias-Rose-Tatham in \cite{BERT21}, and are the subject of this paper. In the case of type $C_2,$ skein algebras and cluster algebras were studied by Ishibashi-Yuasa in \cite{IY25}. The Type $C$ webs have been used by Kenyon-Wu in \cite{KW24} to study dimer models, generalizing a study of Douglas-Kenyon-Shi in the Type $A$ case \cite{DKS24}. Webs for the orthogonal group were described by Bodish-Wu in \cite{BW23}, which marks progress towards webs for type B and D. Progress has also been made for type $F_4$ \cite{SW24}.

For $\mathfrak{g}=\mathfrak{sl}_2,$ the corresponding link invariant is the Kauffman bracket, an unoriented framed version of the Jones polynomial. The skein algebra corresponding to $U_q(\mathfrak{sl}_2)$ is often referred to as the Kauffman bracket skein algebra. It has been a popular object to study in the last decade and several major results are known \cite{BW11,BW16,FKBL19,GJS19}. Skein algebras are generally noncommutative and their centers are generally trivial when $q$ is a generic paremeter. However, when $q$ is specialized to a root of unity, there is a rich center. The work of Bonahon-Wong \cite{BW16} showed that when $q$ is a root of unity of order $N,$ the central elements can be obtained by threading the Chebyshev polynomial $T_N$ along knots. The work of Frohman-Kania-Bartoszynska-L\^{e} \cite{FKBL19} shows that the whole center can be obtained in this way. We expect the polynomials we construct in this paper will play a similar role for the type $C$ case.

From another perspective, when $q$ is generic, the Chebyshev skeins associated to both $T_n$ and the closely related polynomials $S_n$ arise in the study of positive bases for skein algebras \cite{Thur14,Queff22,MQ23}, which have relations to both canonical bases for cluster algebras and a categorification of skein modules of surfaces via foams.

For some cases other than $\mathfrak{g}=\mathfrak{sl}_2,$ analogues of the polynomials $T_n$ and $S_n$ and their corresponding Chebyshev elements have been studied. The analogues of Jones-Wenzl projecors for $\mathfrak{g}=\mathfrak{sl}_3,\mathfrak{sp}_4,\mathfrak{g}_2$ have been explicitly described in \cite{OY97,Kim07,Bod22,BW23b}. The annular closure of such projectors should produce the analogues of the polynomials $S_n$ for these $\mathfrak{g}.$ Analogues of the polynomials $T_n$ are referred to as power-sum polynomials, and have been used in \cite{BH24,BGBHHMP23} to construct central elements in skein algebras for the case of $SL_n$ and $G_2.$ Analogues of power-sum polynomials also arise in the study of HOMFLY-PT and Kauffman skein algebras \cite{MS17,MPS23}.

The goal of this paper is to describe the $Sp_{2n}$ analogues of the power sum elementary polynomials previously described for the case of $SL_n$ in  \cite{BH24}, and then to use them to construct central elements in $Sp_{2n}$ skein algebras at roots of unity.

\subsection{Main results}

We construct a polynomial in $n$ variables $Q_{2n}^{(N,k)} \in \mathbb{Z}[f_1,\dots,f_n]$ for each $k=1,\dots n.$ We then show that when $q$ is a suitable root of unity, each polynomial can be used to construct transparent elements of the Type $C_n$ skein module by way of threading the polynomial along a framed knot.

\begin{thm}(Theorem \ref{main theorem})
When $q^{2N}=1,$ the element obtained by threading $Q_{2n}^{(N,k)}$ along any framed knot $K$ in the skein module $\mathcal{S}_q^{C_n}(M)$ of an oriented $3\text{-manifold } M$ is a transparent element of the skein module. When the module $M$ is a thickened surface $\Sigma \times (0,1)$ the element constructed in this way is a central element of the skein algebra of $\Sigma.$
\end{thm}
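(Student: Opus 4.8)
The plan is to prove the stronger, purely diagrammatic statement that a single strand labeled $1$ can be slid freely across the threaded element, to reduce transparency to this, and then to obtain centrality formally; the argument follows the template of Bonahon--Wong \cite{BW16} for $\mathfrak{sl}_2$ and of \cite{BH24} for $SL_n$, the new ingredients being the Type $C$ braiding formula and the recursion for $Q_{2n}^{(N,k)}$.

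First I would recall that a skein $z$ in $\mathcal{S}_q^{C_n}(M)$ is transparent precisely when an arbitrary web can be isotoped past $z$, and observe that it suffices to check this against a single strand labeled $1$: in the Type $C_n$ web category of \cite{BERT21} every object $V_{\omega_k}$ is a direct summand of $V_{\omega_1}^{\otimes k}$ via the appropriate projector, so once a $1$-labeled strand passes through $z$, so does $V_{\omega_1}^{\otimes k}$, hence the retract $V_{\omega_k}$, and hence any web built out of such objects. Since transparency is a local condition, a general-position argument then reduces the theorem to the following model computation in a ball: cable $K$ by the parallel copies prescribed by the monomials of $Q_{2n}^{(N,k)}$ (substituting, for each variable $f_i$, the $i$-labeled copy of $K$ pushed off along the framing), let a single $1$-labeled strand cross this cabling, and show that the ``over'' tangle equals the ``under'' tangle in the skein module. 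Equivalently, writing $\widehat{Q}$ for the element of the solid-torus skein algebra obtained by threading $Q_{2n}^{(N,k)}$ along the core, one must show that dragging a meridional $1$-labeled strand around $\widehat{Q}$ returns it unchanged.

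To carry this out I would invoke the explicit braiding formula for a $1$-labeled strand crossing a $k$-labeled strand. Because the braiding acts by a scalar on each summand of $V_{\omega_1}\otimes V_{\omega_k}$, each crossing resolves into a fixed short list of web terms --- fusion through a $(k{+}1)$- or $(k{-}1)$-labeled edge, together with the symplectic-contraction terms that have no Type $A$ counterpart --- with coefficients that are monomials in $q$ fixed by the ribbon twists. Resolving every crossing of the meridian with the parallel copies of $K$ turns both the over- and the under-version into state sums over the same collection of webs, which differ only through products of these eigenvalues; the analogue of Kuperberg's tetravalent vertex in the annulus is the bookkeeping device that collapses these sums and converts the comparison into a single recursion in the label $k$ and the cabling degree. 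Now the recursion defining $Q_{2n}^{(N,k)}$ --- a Type $C$ analogue of the Newton/Chebyshev recursion relating power sums to the fundamental-representation variables $f_i$ --- does the work: it is engineered so that, after this resolution, the ``over'' state sum minus the ``under'' state sum telescopes into a $\mathbb{Z}[q^{\pm 1}]$-linear combination of strictly smaller cablings of $K$ whose coefficients are all divisible by $q^{2N}-1$. Imposing $q^{2N}=1$ annihilates every such term and leaves only the identity contribution, so the meridian passes through $\widehat{Q}$ untouched; by the reduction above, threading $Q_{2n}^{(N,k)}$ along any framed knot in any oriented $3$-manifold $M$ yields a transparent element of $\mathcal{S}_q^{C_n}(M)$. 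Finally, when $M=\Sigma\times(0,1)$ the module carries the stacking product, and transparency lets one isotope a representative of any skein $w$ from above the threaded knot to below it by passing its strands through $\widehat{Q}$; hence $\widehat{Q}\,w = w\,\widehat{Q}$ and the threaded element is central.

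The main obstacle is the matching in the last step: controlling the extra symplectic-contraction terms (absent in Type $A$), tracking the precise $q$-exponents produced by the braiding formula so that the surviving coefficients are genuinely multiples of $q^{2N}-1$ rather than merely of $q^{N}-1$, and verifying that the annular tetravalent vertex satisfies the web relations needed to manipulate the state sums. This is exactly the point at which the explicit $1$-versus-$k$ braiding formula and the combinatorics of the polynomials $Q_{2n}^{(N,k)}$ must be used in full.
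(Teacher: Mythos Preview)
Your outline matches the paper's strategy at the coarse level: reduce transparency to sliding a single $1$-labeled strand past the threaded knot, work in the marked annulus $\mathcal{A}_{1,1}$, and show that the ``over'' and ``under'' versions differ by something divisible by $q^{N}-q^{-N}$. The reduction step is essentially the paper's Lemma~\ref{transparency1}, and your identification of the annular tetravalent vertex as the bookkeeping device is on target.

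Where your sketch diverges from a proof is in the passage you flag yourself: the comparison of the over and under state sums. You describe this as a telescoping recursion ``in the label $k$ and the cabling degree'' whose surviving coefficients are divisible by $q^{2N}-1$, but this is not how the argument actually closes, and a direct state-sum approach along these lines would be very hard to control. The paper's mechanism is more structural. One builds an algebra homomorphism $\varphi^*\colon \mathbb{Z}[\lambda_n^{\pm 1},F_{1,n-1},\dots,F_{n-1,n-1}]\to\mathcal{A}_{1,1}$ sending $\lambda_n\mapsto qT$ and $F_{j,n-1}\mapsto X_j$, and proves (Theorem~\ref{phi star}) that $\varphi^*(F_{k,n})=l_k^*$; the companion map $\varphi_*$ for the under-crossing is obtained by $q\mapsto q^{-1}$. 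The entire comparison then reduces to the single identity
\[
\varphi^*(F_{k,n}^{(N)})-\varphi_*(F_{k,n}^{(N)})=(q^{N}-q^{-N})(T^{N}-T^{-N})\,Q_{2(n-1)}^{(N,k-1)}(X_1,\dots,X_{n-1}),
\]
with no further recursion needed. The reason this collapses so cleanly is a fact you do not isolate: the annular elements $X_k$ are \emph{invariant} under $q\leftrightarrow q^{-1}$ (this is built into Definition~\ref{VertDef} and is the content of Lemma~\ref{4vertexsym}), so $\varphi^*$ and $\varphi_*$ agree on every $F_{j,n-1}$ and differ only on $\lambda_n^{\pm 1}$. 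Without this $q$-symmetry of the $X_k$, the ``symplectic-contraction terms'' you worry about would indeed proliferate and there would be no telescoping. So the gap in your proposal is not the overall plan but the missing structural lemma that the annular tetravalent vertices are $q$-palindromic; once you have that, the computation is one line rather than a state sum.
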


We call the polynomials $Q_{2n}^{(N,k)}$ power fundamental polynomials, and they are Type $C$ analogues of the power elementary polynomials $\hat{P}_{2n}^{(N,k)}$ for Type $A$ constructed in \cite{BH24}. Instead of constructing the polynomials $Q_{2n}^{(N,k)}$ from scratch, we obtain them by taking a certain specialization of a linear combination of the Type $A$ polynomials $\hat{P}_{2n}^{(N,k)}$. Our technique of obtaining Type $C_n$ polynomials from Type $A_{2n}$ might be related to the technique of folding, which has recently been successful in \cite{BER24} to obtain Type $B_n$ link homology from Type $A_{2n-1}.$

The proof of our main theorem is diagrammatic and relies on computations with Type $C$ webs in an annulus. In \cite{BERT21}, a formula giving the braiding of two strands labeled by $1$ was given. Our computations required us to braid strands labeled by $1$ and $k$ for any $1 \leq k \leq n.$ To this end we obtained the following closed formula.

\begin{thm}(Theorem \ref{Braiding theorem})
In the Type $C$ skein algebra, the braiding formula for strands labeled by $1$ and $k$ is given by
\begin{equation*}
       (-1)^k \Braid :=   \Braida - q^{-1} \Braidb  - \frac{q^{n-k+1}}{[n-k+1]} \Braidc
   \end{equation*}
\end{thm}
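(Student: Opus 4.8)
The plan is to pin down the claimed braiding morphism by exploiting that $\Hom(1\otimes k,\, 1\otimes k)$ in the Type $C$ web category is low-dimensional, and that the braiding is characterized by a short list of defining properties. First I would identify a basis for this Hom-space: any morphism $1\otimes k \to 1\otimes k$ factors through the isotypic decomposition of $V_1\otimes V_k$, which for $1\le k\le n$ decomposes (generically in $q$) into a small number of summands — the "merge'' channel $V_{k+1}$, the "split'' channel $V_{k-1}$, and (because $V_1$ is the vector representation and $V_k$ a fundamental) possibly a contribution coming from the symplectic form, detected by the cap–cup composed through the trivial representation only when $k=1$. For $k\ge 2$ the relevant trivalent vertices $1\otimes k \to k+1$ and $1\otimes k\to k-1$ (read off from \cite{BERT21}) give exactly the two diagrams $\Braidb$-type and $\Braidc$-type, together with the identity $\Braida$-type picture once the middle strand is resolved, so the three web diagrams on the right-hand side span the space. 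Hence the braiding must be \emph{some} linear combination $\alpha\,\Braida + \beta\,\Braidb + \gamma\,\Braidc$, and the content of the theorem is the evaluation of $\alpha,\beta,\gamma$.

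Next I would determine the coefficients by imposing the axioms that characterize the Reshetikhin–Turaev braiding. The cleanest constraints are: (i) \textbf{invertibility / Reidemeister II}, i.e. composing the claimed crossing with its mirror (obtained by $q\mapsto q^{-1}$ and reversing the sign convention) must yield the identity $1\otimes k$; (ii) \textbf{naturality}, i.e. sliding a trivalent vertex $k\otimes 1 \to k+1$ (or $\to k-1$) through the crossing exchanges it with the crossing on the merged strand, which reduces the $(1,k)$ case to known lower crossings by induction on $k$; and (iii) the \textbf{twist/ribbon eigenvalues}: precomposing and postcomposing with the projectors onto $V_{k\pm1}$ inside $V_1\otimes V_k$ must return $q$ raised to the appropriate quadratic-Casimir difference, which for $\spn[2n]$ with the standard normalization gives the factors $q^{-1}$ on the $V_{k-1}$-channel and the factor with $[n-k+1]$ in the denominator on the other (this is where the fusion/Jones–Wenzl coefficient $\tfrac{[n-k+1]}{[n-k+2]}$-type quantity enters, matching the $\tfrac{q^{n-k+1}}{[n-k+1]}$ in the statement). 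Combining (i)–(iii) overdetermines $(\alpha,\beta,\gamma)$ and forces the stated values, with the overall sign $(-1)^k$ fixed by tracking the $k=1$ base case back to the braiding formula already recorded in \cite{BERT21}.

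An alternative, more self-contained route — which I would actually carry out in the paper for rigor — is an explicit induction on $k$. The base case $k=1$ is the known formula. For the inductive step, write the $k$-labeled strand as the image of a Jones–Wenzl-type idempotent on $k$ parallel $1$-strands, drag the single strand across one $1$-strand at a time using the $k=1$ formula, and repeatedly apply the Type $C$ skein relations (the square/bigon reductions and the relation expressing a $1$-strand bigon in terms of $[n-k+1]$-type coefficients) to collapse the resulting tangle back into the three standard diagrams. The bookkeeping collapses telescopically, and the coefficient $\tfrac{q^{n-k+1}}{[n-k+1]}$ emerges from the recursion $c_k = $ (something) $\cdot c_{k-1}$ governing how the "turnback'' term rescales when one more strand is absorbed into the cable.

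The main obstacle I anticipate is step (iii)/the turnback coefficient: controlling the $\Braidc$ term. Its coefficient is not forced by the generic semisimple representation theory alone in an obvious way — it is sensitive to the precise normalization of the Type $C$ trivalent vertices and of the fundamental-representation idempotents in \cite{BERT21}, and the appearance of $[n-k+1]$ in the denominator means one must verify that this quantum integer is invertible in the relevant setting and track its origin through the cabling carefully (a sign or normalization error here is easy to make and hard to detect). Establishing that the three diagrams are genuinely linearly independent for all $1\le k\le n$ (so that the coefficients are well-defined and unique) is a secondary point that should be addressed, e.g. by pairing against the evaluation/trace forms or by a dimension count of $\Hom(1\otimes k,1\otimes k)$.
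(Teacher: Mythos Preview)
Your inductive route is the right instinct and is essentially what the paper does, but the mechanism you propose is heavier than necessary. The paper does \emph{not} cable the $k$-strand into $k$ parallel $1$-strands through a Jones--Wenzl--type idempotent. Instead it uses the recursive definition of the braiding already built into \cite{BERT21}: equation~\eqref{klbraiding} expresses the $(1,k{+}1)$ crossing as $\tfrac{1}{[k{+}1]}$ times a diagram in which the $(k{+}1)$-strand is split once, by a single trivalent vertex, into $k$ and $1$. One then applies the base case (the $(1,1)$ braiding \eqref{InitialBraid}) and the inductive hypothesis (the $(1,k)$ formula) simultaneously; this produces seven concrete webs. Each of these seven is reduced, using the flow-vertex relations \eqref{FlowVert}, \eqref{FlowAssoci}, \eqref{VanishBigon} and the defining relations (\ref{defskein}c)--(\ref{defskein}e) and \eqref{deducedskein}, to a combination of $\IndBraida$, $\IndBraidb$, $\IndBraidc$. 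The final step is a pair of quantum-integer identities (verified symbolically), which is exactly where the obstacle you flagged---the $\tfrac{q^{n-k+1}}{[n-k+1]}$ coefficient---gets resolved. So your telescoping intuition is correct, but the recursion is ``split off one strand via a trivalent vertex'' rather than ``resolve a cabled idempotent,'' and the latter would force you to control the idempotent under repeated application of the $(1,1)$ formula, which is substantially messier.

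Your first, axiomatic approach is plausible but has a gap in the representation theory. For $2\le k\le n{-}1$ the decomposition of $V_{\varpi_1}\otimes V_{\varpi_k}$ has \emph{three} irreducible summands, not two: besides $V_{\varpi_{k+1}}$ and $V_{\varpi_{k-1}}$ there is a middle piece (of highest weight $\varpi_1+\varpi_{k-1}$), and this is why the Hom space is genuinely $3$-dimensional---not because of a cap--cup through the trivial representation, which as you note only occurs for $k=1$. Moreover, the three diagrams $\Braida$, $\Braidb$, $\Braidc$ are not the isotypic projectors; they are related to them by a nontrivial change of basis (this is essentially the content of relations (\ref{defskein}e) and \eqref{deducedskein}). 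So pinning the coefficients via ribbon eigenvalues would still require you to compute that change of basis, which is comparable in difficulty to the direct reduction the paper performs. Either way you end up doing a web computation of roughly the same size; the paper's choice has the advantage of staying entirely inside the diagrammatic calculus with no appeal to the semisimple decomposition.
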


To align the combinatorics of the skein theory with that of the Type $C$ power fundamental polynomials, we employ the use of a certain family of elements $X_k$ in the annulus which arise from an annular generalization of the $4\text{-valent}$ vertex used by Kuperberg. Kuperberg famously introduced a $4\text{-valent}$ vertex to construct bases of $Sp_4$ web spaces in \cite{Kup96}. An $Sp_{2n}$ analogue of Kuperberg's $4\text{-valent}$  vertex was used in \cite{BERT21} to construct bases of $Sp_{2n}$ web spaces. Our element $X_1$ in the annulus comes directly from the vertex used in \cite{BERT21} while our elements $X_k$ for $k \geq 2$ are defined in the annular setting only and thus represent a new kind of generalization. 

\subsection{Acknowledgments.}
V.H. was partially supported by National Science Foundation grant DMS-$2136090$. H.W. was supported by National Science Foundation grant CCF-$2317280$ and Simons Foundation grant $994328$. We would like to thank Elijah Bodish and Monica Vazirani for helpful conversations.

\section{Type C web category and skein algebra}

\subsection{Type C webs}

We now recall the definition of the Type $C$ web category from \cite{BERT21}, denoted by $\textbf{Web}_q(\mathfrak{sp}_{2n}).$

\def\Vertc
{\begin{tric}
\draw [scale=0.8] (0,0)--(90:1) (0,0)--(210:1) (0,0)--(330:1);
\draw (90:1)node[black,anchor=south,scale=0.7]{$k+1$}
      (210:1)node[black,anchor=north,scale=0.7]{$1$}
      (330:1)node[black,anchor=north,scale=0.7]{$k$}; 
\end{tric}
}

\def\Vertd
{\begin{tric}
\draw [scale=0.8] (0,0)--(90:1) (0,0)--(210:1) (0,0)--(330:1);
\draw (90:1)node[black,anchor=south,scale=0.7]{$k+1$}
      (210:1)node[black,anchor=north,scale=0.7]{$k$}
      (330:1)node[black,anchor=north,scale=0.7]{$1$}; 
\end{tric}
}

\def\Skeinak
{\begin{tric}
\draw (0.7,0) circle (0.7);
\draw (0,0)node[left,black,scale=0.7]{$k$}; 
\end{tric}
}

\def\Skeinaone
{\begin{tric}
\draw (0.7,0) circle (0.7);
\draw (0,0)node[left,black,scale=0.7]{$1$}; 
\end{tric}
}

\def\Skeingg
{\begin{tric}
\draw[scale=0.8] (0,-1)--(0,0) (0,0)..controls(0.7,0.5)and(0.7,1.5)..(0,1.5)..controls(-0.7,1.5)and(-0.7,0.5)..(0,0);
\draw (0,-0.8)node[below,black,scale=0.7]{$2$};
\draw (-0.4,0.7)node[left,black,scale=0.7]{$1$};
\end{tric}
}

\def\Skeinbigon
{\begin{tric}
\draw[scale= 0.8] (0,0.7)--(0,1.5) (0,-0.7)--(0,-1.5)
        (0,1.5)node[above,black,scale=0.7]{$k$}
        (0,-1.5)node[below,black,scale=0.7]{$k$}
      (0,0.7)..controls(-0.5,0.7)and(-0.5,-0.7)..(0,-0.7) node[left,midway,black,scale=0.7]{$1$} (0,0.7)..controls(0.5,0.7)and(0.5,-0.7)..(0,-0.7)
      node[right,midway,black,scale=0.7]{$k-1$} ;
\end{tric}
}

\def\Skeinbigona
{\begin{tric}
\draw [scale=0.8] (0,1.5)--(0,-1.5)  node[below,black,scale=0.7]{$k$} ;
\end{tric}
}

\def\FlowSkeinbigon
{\begin{tric}
\draw[scale= 0.8] (0,0.7)--(0,1.5) (0,-0.7)--(0,-1.5)
        (0,1.5)node[above,black,scale=0.7]{$k+1$}
        (0,-1.5)node[below,black,scale=0.7]{$k-1$}
      (0,0.7)..controls(-0.5,0.7)and(-0.5,-0.7)..(0,-0.7) node[left,midway,black,scale=0.7]{$1$} (0,0.7)..controls(0.5,0.7)and(0.5,-0.7)..(0,-0.7)
      node[right,midway,black,scale=0.7]{$k$} ;
\end{tric}
}

\def\SkeinIH
{\begin{tric}
\draw[scale=0.7] (-2,-1.732)--(-1,0)--(-2,1.732) (2,1.732)--(1,0)--(2,-1.732) 
(-2,-1.732)node[below,black,scale=0.7]{$k$}
(-2,1.732)node[above,black,scale=0.7]{$1$}
(2,1.732)node[above,black,scale=0.7]{$1$}
(2,-1.732) node[below,black,scale=0.7]{$k$}
(-1,0)--(1,0)node[below,midway,black,scale=0.7]{$k+1$};
\end{tric}
}

\def\GrSkeinIH
{\begin{tric}
\draw[darkgreen,scale=0.7] (-2,-1.732)--(-1,0)--(-2,1.732) (2,1.732)--(1,0)--(2,-1.732) 
(-2,-1.732)node[below,black,scale=0.7]{$k$}
(-2,1.732)node[above,black,scale=0.7]{$1$}
(2,1.732)node[above,black,scale=0.7]{$1$}
(2,-1.732) node[below,black,scale=0.7]{$k$}
(-1,0)--(1,0)node[below,midway,black,scale=0.7]{$k+1$};
\end{tric}
}

\def\SkeinIHa
{\begin{tric}
\draw [scale=0.7]  (-1.74,2)--(0,1)--(1.74,2) 
(1.74,-3)--(0,0) node[right,pos=0.7,black,scale=0.7]{$1$}
(0,0)--(-1.74,-3) node[left,pos=0.3,black,scale=0.7]{$1$}
(0,0)--(0,1) node[left,midway,black,scale=0.7]{$2$}
(-1.74,-3) node[below,black,scale=0.7]{$k$}
(1.74,-3) node[below,black,scale=0.7]{$k$}
(-1.74,2) node[above,black,scale=0.7]{$1$}
(1.74,2) node[above,black,scale=0.7]{$1$}
(1.16,-2)--(-1.16,-2)node[below,midway,black,scale=0.7]{$k-1$};
\end{tric}
}

\def\GrSkeinIHa
{\begin{tric}
\draw [darkgreen, scale=0.7]  (-1.74,2)--(0,1)--(1.74,2) 
(1.74,-3)--(0,0) node[right,pos=0.7,black,scale=0.7]{$1$}
(0,0)--(-1.74,-3) node[left,pos=0.3,black,scale=0.7]{$1$}
(0,0)--(0,1) node[left,midway,black,scale=0.7]{$2$}
(-1.74,-3) node[below,black,scale=0.7]{$k$}
(1.74,-3) node[below,black,scale=0.7]{$k$}
(-1.74,2) node[above,black,scale=0.7]{$1$}
(1.74,2) node[above,black,scale=0.7]{$1$}
(1.16,-2)--(-1.16,-2)node[below,midway,black,scale=0.7]{$k-1$};
\end{tric}
}

\def\SkeinIHb
{\begin{tric}
\draw[scale=0.7] (-2,-1.732)--(-1,0)--(-2,1.732) (2,1.732)--(1,0)--(2,-1.732) 
(-2,-1.732)node[below,black,scale=0.7]{$k$}
(-2,1.732)node[above,black,scale=0.7]{$1$}
(2,1.732)node[above,black,scale=0.7]{$1$}
(2,-1.732) node[below,black,scale=0.7]{$k$}
(-1,0)--(1,0)node[below,midway,black,scale=0.7]{$k-1$};
\end{tric}
}

\def\GrSkeinIHb
{\begin{tric}
\draw[darkgreen,scale=0.7] (-2,-1.732)--(-1,0)--(-2,1.732) (2,1.732)--(1,0)--(2,-1.732) 
(-2,-1.732)node[below,black,scale=0.7]{$k$}
(-2,1.732)node[above,black,scale=0.7]{$1$}
(2,1.732)node[above,black,scale=0.7]{$1$}
(2,-1.732) node[below,black,scale=0.7]{$k$}
(-1,0)--(1,0)node[below,midway,black,scale=0.7]{$k-1$};
\end{tric}
}

\def\SkeinIHc
{\begin{tric}
\draw  [scale=0.7] 
(-2,2)..controls(-1,1)and(1,1)..(2,2) node[above,black,midway,scale=0.7]{$1$}
(2,-2)..controls(1,-1)and(-1,-1)..(-2,-2) node[below,black,midway,scale=0.7]{$k$};
\end{tric}
}

\def\GrSkeinIHc
{\begin{tric}
\draw  [darkgreen,scale=0.7] 
(-2,2)..controls(-1,1)and(1,1)..(2,2) node[above,black,midway,scale=0.7]{$1$}
(2,-2)..controls(1,-1)and(-1,-1)..(-2,-2) node[below,black,midway,scale=0.7]{$k$};
\end{tric}
}

\def\Skeinassocia
{\begin{tric}
\draw[scale=0.8] (0,0)..controls(0,0.5)and(0.2,0.7)..(0.5,1)
      (1,0)..controls(1,0.5)and(0.8,0.7)..(0.5,1)
      (0.5,1)..controls(0.5,1.5)and(0.3,1.7)..(0,2) 
              node[right,black,midway, scale=0.7]{$k+1$}
      (-1,0)..controls(-1,1)and(-0.6,1.5)..(0,2)
      (0,2)--(0,3)
      (0,0)node[below,black,scale=0.7]{$k$}
      (1,0)node[below,black,scale=0.7]{$1$}
      (-1,0)node[below,black,scale=0.7]{$1$}
      (0,3)node[above,black,scale=0.7]{$k+2$};
\end{tric}
}

\def\Skeinassoci
{\begin{tric}
\draw[scale=0.8] (0,0)..controls(0,0.5)and(-0.2,0.7)..(-0.5,1)
      (-1,0)..controls(-1,0.5)and(-0.8,0.7)..(-0.5,1)
      (-0.5,1)..controls(-0.5,1.5)and(-0.3,1.7)..(0,2) 
              node[left,black,midway, scale=0.7]{$k+1$}
      (1,0)..controls(1,1)and(0.6,1.5)..(0,2)
      (0,2)--(0,3)
      (0,0)node[below,black,scale=0.7]{$k$}
      (-1,0)node[below,black,scale=0.7]{$1$}
      (1,0)node[below,black,scale=0.7]{$1$}
      (0,3)node[above,black,scale=0.7]{$k+2$};
\end{tric}
}

Let $\mathcal{R}$ denote a commutative unital integral domain containing an invertible element $q \in \mathcal{R}$ such that $[k]^{-1} \in \mathcal{R},$ for $k= 1,2,\dots ,n,$ where the quantum integer $[n]$ is the Laurent polynomial expressed by $[n]:= \dfrac{q^n-q^{-n}}{q-q^{-1}}$. 
Denote by $ [n]! :=  [1] [2] [3] ... [n]$.

\begin{defn}[{\cite[Definition 1.1]{BERT21}}] 
 The pivotal $\mathcal{R}$-linear category $\textbf{Web}_q(\mathfrak{sp}_{2n}) $ is the category whose objects are generated by the self-dual objects $ k \in \{1,2,...,n \}$, and whose morphism spaces are spanned by $C_n$ webs with internal vertices of the following forms:
 \begin{align*}
     \Vertc \in {\text{Hom}}_{\textbf{Web}_q(\mathfrak{sp}_{2n})}(1\otimes k, k+1)\ \ ,  \ \ \ \ \ \ \ \ \ \ \ \ \ \  
     \Vertd \in {\text{Hom}}_{\textbf{Web}_q(\mathfrak{sp}_{2n})}(k \otimes 1, k+1) \ 
 \end{align*}
for $0 \leq k \leq n-1$, modulo the following relations with the conventions that any strand carrying the label $0$ may be erased and a diagram is zero if it contains a strand of label $l$ with either $l>n$ or $l<0.$

\begin{align} 
  \label{defskein}
&\text{(\ref{defskein}a)} \Skeinaone = -\frac{[n][2n+2]}{[n{+}1]},  \ \ 
\text{(\ref{defskein}b)}\Skeingg=\ 0 \   ,\ \ 
\text{(\ref{defskein}c)}\Skeinbigon  \ = \ [k] \ \Skeinbigona \ \  , \ \ 
\text{(\ref{defskein}d)}\Skeinassoci =\Skeinassocia  
\\
& \text{(\ref{defskein}e)} \SkeinIH =  \SkeinIHa  
-\frac{[n{-}k]}{[n{-}k{+}1]}\SkeinIHb
  +  \frac{[n{-}k]}{[n]}  \SkeinIHc. \notag
\end{align}
\end{defn}

\def\IndBraida
{\begin{tric}
\draw[scale=0.7] (-1.732,-2)--(0,-1)--(1.732,-2) (1.732,2)--(0,1)--(-1.732,2) 
(-1.732,-2)node[below,black,scale=0.7]{$1$}
(1.732,-2)node[below,black,scale=0.7]{$k+1$}
(1.732,2)node[above,black,scale=0.7]{$1$}
(-1.732,2) node[above,black,scale=0.7]{$k+1$}
(0,-1)--(0,1)node[left,midway,black,scale=0.7]{$k+2$};
\end{tric}
}

\def\IndBraidc
{\begin{tric}
\draw[scale=0.7] (-1.732,-2)--(0,-1)--(1.732,-2) (1.732,2)--(0,1)--(-1.732,2) 
(-1.732,-2)node[below,black,scale=0.7]{$1$}
(1.732,-2)node[below,black,scale=0.7]{$k+1$}
(1.732,2)node[above,black,scale=0.7]{$1$}
(-1.732,2) node[above,black,scale=0.7]{$k+1$}
(0,-1)--(0,1)node[left,midway,black,scale=0.7]{$k$};
\end{tric}
}

\def\NBraida
{\begin{tric}
\draw[scale=0.5] (-1.732,-2)--(0,-1)--(1.732,-2) (1.732,2)--(0,1)--(-1.732,2) 
(-1.732,-2)node[below,black,scale=0.7]{$1$}
(1.732,-2)node[below,black,scale=0.7]{$n$}
(1.732,2)node[above,black,scale=0.7]{$1$}
(-1.732,2) node[above,black,scale=0.7]{$n$}
(0,-1)--(0,1)node[left,midway,black,scale=0.7]{$n+1$};
\end{tric}
}

\def\IndBraidb
{\begin{tric}
\draw[scale=0.7] (-2,-1.732)--(-1,0)--(-2,1.732) (2,1.732)--(1,0)--(2,-1.732) 
(-2,-1.732)node[below,black,scale=0.7]{$1$}
(-2,1.732)node[above,black,scale=0.7]{$k+1$}
(2,1.732)node[above,black,scale=0.7]{$1$}
(2,-1.732) node[below,black,scale=0.7]{$k+1$}
(-1,0)--(1,0)node[below,midway,black,scale=0.7]{$k$};
\end{tric}
}

\def\IndBraidbH
{\begin{tric}
\draw[scale=0.7] (-2,-1.732)--(-1,0)--(-2,1.732) (2,1.732)--(1,0)--(2,-1.732) 
(-2,-1.732)node[below,black,scale=0.7]{$1$}
(-2,1.732)node[above,black,scale=0.7]{$k+1$}
(2,1.732)node[above,black,scale=0.7]{$1$}
(2,-1.732) node[below,black,scale=0.7]{$k+1$}
(-1,0)--(1,0)node[below,midway,black,scale=0.7]{$k+2$};
\end{tric}
}

\def\Braidd
{\begin{tric}
\draw[scale=0.7] (-2,-1.732)--(-1,0)--(-2,1.732) (2,1.732)--(1,0)--(2,-1.732) 
(-2,-1.732)node[below,black,scale=0.7]{$1$}
(-2,1.732)node[above,black,scale=0.7]{$k$}
(2,1.732)node[above,black,scale=0.7]{$1$}
(2,-1.732) node[below,black,scale=0.7]{$k$}
(-1,0)--(1,0)node[below,midway,black,scale=0.7]{$k+1$};
\end{tric}
}

We note that the expression on the right side of Equation (\ref{defskein}a) is to be interpreted as the Laurent polynomial in generic $q$ before specialization to our $q \in \mathcal{R}$ and so is defined even if $[n+1]$ is not invertible in $\mathcal{R}.$

The category $\FRep(U_q(\spn))$ is the full subcategory of the representation category of the quantum group $U_q(\spn)$ whose objects are tensor powers of fundamental representations of $U_q(\spn).$ The main theorem in \cite[Theorem 1.4]{BERT21} shows that when $\mathcal{R}=\mathbb{C}(q),$ there is an equivalence of $\C(q)$-linear ribbon categories 
\[
\Phi \colon \Web(\spn) \to \FRep(U_q(\spn))
\]
sending $k$ to $V_{\varpi_k},$ the $k\text{-th}$ fundamental representation of $U_q(\spn).$ We will discuss the braiding for Type $C$ webs in Section \ref{braiding formulas section}.

The following skein relation follows from the defining relations. It will play a role in our construction.

\begin{proposition}[{\cite[Theorem 3.6]{BERT21}}] 

\begin{equation} \label{deducedskein}
    \Braida + \frac{[n-k]}{[n-k+1]} \Braidc = \Braidd + \frac{[n-k]}{[n-k+1]} \Braidb
\end{equation}

\end{proposition}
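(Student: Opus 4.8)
The plan is to derive Equation~(\ref{deducedskein}) from relation~(\ref{defskein}e) by a pair of ``$H$-to-$I$'' manipulations, using the defining vertices and the bigon relation~(\ref{defskein}c) to rewrite the cubic terms. First I would take relation~(\ref{defskein}e), which expresses the $H$-web $\SkeinIH$ (with middle strand $k+1$) as a linear combination of three terms: a web $\SkeinIHa$ with an internal $2$-labeled edge, the $I$-web $\SkeinIHb$ with middle strand $k-1$, and the split term $\SkeinIHc$. The key observation is that the first term $\SkeinIHa$ can itself be simplified: the little triangle with a $2$-edge and two $1$-edges is, up to the bigon/associativity relations (\ref{defskein}c,d), proportional to the $I$-web $\Braidd$ whose middle strand carries label $k$. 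Concretely, one caps off or slides the $2$-labeled edge using the vertex $\Vertc$ and relation~(\ref{defskein}c) with $k=2$, picking up a factor coming from $[2]$ or a ratio of quantum integers; I expect the coefficient to work out so that $\SkeinIHa$ contributes exactly the $\Braidd$ term plus possibly a multiple of the split term $\SkeinIHc$.

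Next I would perform the analogous computation with the roles of the two $1$-labeled boundary strands interchanged (equivalently, apply the mirror of relation~(\ref{defskein}e), which holds by the pivotal/rotational symmetry of $\Web_q(\spn)$ recorded in \cite{BERT21}). Writing $\Braida$ in terms of the $H$-web with middle label $k+1$ via this mirrored relation, and writing $\Braidd$ in terms of the $H$-web with middle label $k-1$ via relation~(\ref{defskein}e) read in the other direction, the two resulting expansions share the same ``split'' term $\SkeinIHc$ and the same $I$-webs $\Braidb$ and $\Braidc$ but with the coefficients $\frac{[n-k]}{[n-k+1]}$ attached in a way that produces the claimed identity after collecting terms. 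In other words, (\ref{deducedskein}) should be exactly the statement that the $H$-to-$I$ expansion is consistent when read from either side — a ``square is commutative'' type relation.

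The main obstacle will be bookkeeping the quantum-integer coefficients correctly: the triangle-with-$2$-edge simplification, the normalization of the trivalent vertices, and the shift $k \mapsto k\pm 1$ in the index of $\frac{[n-k]}{[n-k+1]}$ all have to be tracked precisely, and a sign or an off-by-one in the quantum integers would break the cancellation. I would manage this by first doing the $n=k$ or small-$k$ cases by hand to fix conventions, then checking that the general coefficients reduce to those. A secondary point to be careful about is that relation~(\ref{defskein}e) as stated has $1$ and $k$ on specific sides of the $H$; I must verify that the version with $1$ and $k$ swapped is genuinely available (it is, by rotating the diagram $180^\circ$, since all objects are self-dual and the category is pivotal), so that both expansions can be written down. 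Once the coefficients are verified, rearranging the two expansions into the form $\Braida + \frac{[n-k]}{[n-k+1]}\Braidc = \Braidd + \frac{[n-k]}{[n-k+1]}\Braidb$ is purely formal.
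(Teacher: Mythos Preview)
The paper does not prove this proposition; it is simply cited from \cite[Theorem~3.6]{BERT21}, so there is no in-paper argument to compare your sketch against.

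On your sketch itself: the instinct to derive (\ref{deducedskein}) from (\ref{defskein}e) is sound, but there is a genuine gap. Relation~(\ref{defskein}e) is an identity in $\Hom_{\Web(\spn)}(k\otimes k,\,1\otimes 1)$ --- its four boundary points carry labels $k,k,1,1$ as you go around --- whereas the webs in (\ref{deducedskein}) live in $\Hom_{\Web(\spn)}(1\otimes k,\,k\otimes 1)$, with boundary labels $1,k,1,k$. No rotation or mirror of (\ref{defskein}e) lands in this second space: a $180^{\circ}$ rotation fixes the boundary pattern, and a $90^{\circ}$ rotation gives $k,1,1,k$ or its reverse. In particular, your assertion that the first term on the right of (\ref{defskein}e) simplifies to a multiple of one of the $H$-webs in (\ref{deducedskein}) cannot be correct as stated, since those diagrams have different boundary data and lie in different morphism spaces. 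To salvage the approach you would need an additional step --- bend one boundary strand using the pivotal structure, or equivalently pre- or post-compose (\ref{defskein}e) with a suitable cap or vertex, so as to move into the correct Hom space --- and then track how each term transforms under that operation. That step, not the quantum-integer bookkeeping you flag, is the main missing ingredient in your plan.
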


\subsection{Braiding formulas}\label{braiding formulas section}

The following skein relation gives the definition of the braiding of two strands, both labeled by $1.$

\def\BraidOneOne
{\begin{tric}
\draw (1.5,-1.5)--(-1.5,1.5);
\draw[double=darkblue,ultra thick,white,line width=3pt] (1.5,1.5)--(-1.5,-1.5);
\draw (-1.5,-1.5) node[below,black,scale=0.7]{$1$};
\draw (1.5,-1.5) node[below,black,scale=0.7]{$1$};
\end{tric}}

\def\BraidOneOneInv
{\begin{tric}
\draw (-1.5,-1.5)--(1.5,1.5);
\draw[double=darkblue,ultra thick,white,line width=3pt] (-1.5,1.5)--(1.5,-1.5);
\draw (-1.5,-1.5) node[below,black,scale=0.7]{$1$};
\draw (1.5,-1.5) node[below,black,scale=0.7]{$1$};
\end{tric}}

\def\OneOneTwo
{\begin{tric}
\draw[scale=0.7] (-1.732,-2)--(0,-1)--(1.732,-2) (1.732,2)--(0,1)--(-1.732,2) 
(-1.732,-2)node[below,black,scale=0.7]{$1$}
(1.732,-2)node[below,black,scale=0.7]{$1$}
(1.732,2)node[above,black,scale=0.7]{$1$}
(-1.732,2) node[above,black,scale=0.7]{$1$}
(0,-1)--(0,1)node[left,midway,black,scale=0.7]{$2$};
\end{tric}
}

\def\IdWeb
{\begin{tric}
\draw   
(-1.5,1.5)..controls(-0.7,0.7)and(-0.7,-0.7)..(-1.5,-1.5) 
(1.5,-1.5)..controls(0.7,-0.7)and(0.7,0.7)..(1.5,1.5) ;
\draw (-1.5,-1.5) node[below,black,scale=0.7]{$1$};
\draw (1.5,-1.5) node[below,black,scale=0.7]{$1$};
\end{tric}
}

\def\CupCap
{\begin{tric}
\draw 
(-1.5,1.5)..controls(-0.7,0.7)and(0.7,0.7)..(1.5,1.5) 
(1.5,-1.5)..controls(0.7,-0.7)and(-0.7,-0.7)..(-1.5,-1.5) ;
\draw (-1.5,-1.5) node[below,black,scale=0.7]{$1$};
\draw (1.5,-1.5) node[below,black,scale=0.7]{$1$};
\end{tric}
}

\begin{proposition} 
[{\cite[Porism 5.4]{BERT21}}] 
The elements
\begin{equation} \label{InitialBraid}
\BraidOneOne=q\IdWeb +\cfrac{q^{-n}}{[n]} \OneOneTwo - \CupCap  
\end{equation}
and 
\begin{equation*}
\BraidOneOneInv= q^{-1} \IdWeb + \cfrac{q^n}{[n]}\OneOneTwo - \CupCap 
\end{equation*}
in $\End_{\Web(\spn)}(1 \otimes 1)$ are inverse isomorphisms that are sent by the functor $\Phi$ to the 
braiding $\beta_{V_1,V_1}$ and its inverse.
\end{proposition}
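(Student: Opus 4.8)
There are two claims to establish: that the two displayed webs are mutually inverse isomorphisms, and that $\Phi$ carries them to $\beta_{V_1,V_1}^{\pm 1}$. Both are statements about the single endomorphism algebra $A := \End_{\Web(\spn)}(1\otimes 1)$, so I would begin by describing $A$ explicitly. For $n\ge 2$ the space $A$ is spanned by the three webs $\IdWeb$, $\OneOneTwo$, and $\CupCap$ (for $n=1$ the middle web vanishes and $A$ is two-dimensional, matching $\End_{U_q}(V_1\otimes V_1)$), and I would read off its multiplication table from the defining relations: stacking $\CupCap$ on $\CupCap$ creates an internal circle of label $1$, which by the circle relation (\ref{defskein}a) contributes the scalar $-[n][2n{+}2]/[n{+}1]$; stacking $\OneOneTwo$ on $\OneOneTwo$ creates an internal bigon with legs $1,1$ inside a label-$2$ strand, which by the bigon relation (\ref{defskein}c) with $k=2$ contributes $[2]$; and $\OneOneTwo\circ\CupCap = \CupCap\circ\OneOneTwo = 0$ because a label-$2$ strand capped off to nothing is the lollipop of (\ref{defskein}b) (equivalently $\Hom(\mathbf 1,2)=0$). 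Recording any residual signs carefully, this gives a complete presentation of $A$.

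The first claim is then a direct computation: expanding the product of $q\,\IdWeb + \tfrac{q^{-n}}{[n]}\OneOneTwo - \CupCap$ with $q^{-1}\IdWeb + \tfrac{q^{n}}{[n]}\OneOneTwo - \CupCap$ in the multiplication table, the $\IdWeb$-coefficient works out to $1$ and the remaining coefficients cancel, and symmetrically for the product in the opposite order. Hence the two elements are two-sided inverses of one another in $A$, in particular isomorphisms.

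For the second claim I would exploit semisimplicity. Under $\Phi$ the object $1\otimes 1$ becomes $V_{2\varpi_1}\oplus V_{\varpi_2}\oplus V_0$ (the symmetric square $V_{2\varpi_1}$, the antisymmetric square $V_{\varpi_2}\oplus V_0$), so $A$ is a commutative semisimple algebra with orthogonal idempotents $e_{2\varpi_1}, e_{\varpi_2}, e_0$. From the multiplication table, $\CupCap$ is a scalar multiple of the trivial idempotent $e_0$ and $\OneOneTwo$ is a scalar multiple of $e_{\varpi_2}$, while $\IdWeb = e_{2\varpi_1}+e_{\varpi_2}+e_0$; thus the displayed element is already diagonal in the $e_\lambda$, and it suffices to match its three eigenvalues with those of $\beta_{V_1,V_1}$. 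Since $\Phi$ is a functor of ribbon categories, $\beta_{V_1,V_1}$ also acts by a scalar on each summand, and these scalars are fixed by the ribbon structure: on $V_\lambda\subseteq V_1\otimes V_1$ the braiding acts by $\pm q^{\frac12(\langle\lambda,\lambda+2\rho\rangle - 2\langle\varpi_1,\varpi_1+2\rho\rangle)}$, with sign $+$ when $V_\lambda$ lies in the symmetric square and $-$ when it lies in the antisymmetric square. Inserting the type-$C_n$ data ($\rho=(n,n-1,\dots,1)$ in standard coordinates, $\langle e_i,e_j\rangle = \delta_{ij}$) evaluates the three scalars, which I would then check agree with the eigenvalues read off from the expansion of the displayed element in the $e_\lambda$. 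Running the same comparison with the crossing reversed identifies the second web with $\beta_{V_1,V_1}^{-1}$.

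\textbf{Expected main obstacle.} The only delicate step is the eigenvalue comparison in the last paragraph: one must be careful about the $\pm$ signs, which encode the symplectic Frobenius–Schur sign already visible in the negative value of the circle in (\ref{defskein}a), and about normalizing the invariant form so that the exponents come out as the integer powers of $q$ appearing in the formula. Once the idempotent decomposition of $A$ is in hand, the rest is bookkeeping with the defining relations.
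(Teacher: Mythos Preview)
The paper does not supply a proof of this proposition: it is quoted verbatim as Porism~5.4 of \cite{BERT21} and used thereafter without argument. There is therefore no proof in the present paper against which to compare your proposal.

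For what it is worth, your approach is a standard and workable one. The mutual-inverse check is exactly the computation you describe, using (\ref{defskein}a), (\ref{defskein}b), and (\ref{defskein}c) to obtain the multiplication table of $\End_{\Web(\spn)}(1\otimes 1)$. The identification with $\beta_{V_1,V_1}$ via the idempotent decomposition $V_1\otimes V_1 \cong V_{2\varpi_1}\oplus V_{\varpi_2}\oplus V_0$ is also the natural argument; your observation that $\CupCap$ and the $2$-labeled $H$-web factor through the trivial object and the object $2$ respectively, hence are scalar multiples of $e_0$ and $e_{\varpi_2}$, is correct and makes the diagonalization immediate. The caveat you flag about signs and the normalization of the form is genuine and is indeed where the bookkeeping lives, but there is no missing idea.
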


\def\Braidre
{\begin{tric}
\draw (1.5,-1.5)--(-1.5,1.5);
\draw[double=darkblue,ultra thick,white,line width=3pt] (-1.5,-1.5)--(1.5,1.5);
\draw (-1.5,-1.5) node[below,black,scale=0.7]{$1$};
\draw (1.5,-1.5) node[below,black,scale=0.7]{$k$};
\end{tric}}

\def\BraidP
{\begin{tric}
\draw (-1.5,-1.5)--(1.5,1.5);
\draw[double=darkblue,ultra thick,white,line width=3pt] (-1.5,1.5)--(1.5,-1.5);
\draw (-1.5,-1.5) node[below,black,scale=0.7]{$1$};
\draw (1.5,-1.5) node[below,black,scale=0.7]{$k+1$};
\end{tric}}

\def\BraidPa
{\begin{tric}
\draw (-1.5,-1.5)--(1.5,1.5);
\draw[double=darkblue,ultra thick,white,line width=3pt] (-0.5,0.5)--(1.5,-1.5);
\draw (-1.5,-1.5) node[below,black,scale=0.7]{$1$};
\draw (1.5,-1.5) node[below,black,scale=0.7]{$k+1$};
\draw (-0.5,0.5)..controls(-1.5,0.5)..(-1.5,1.5) node[pos=0.3,below,black,scale=0.7]{$1$}
      (-0.5,0.5)..controls(-0.5,1.5)..(-1.5,1.5) node[midway,above,black,scale=0.7]{$k$}
      (-1.5,1.5)--(-2,2);  
\end{tric}}

\def\BraidPb
{\begin{tric}
\draw (-1.5,-1.5)--(1.5,1.5) (0.7,-0.7)--(1.5,-1.5) (-0.7,0.7)--(-1.5,1.5);
\draw[double=darkblue,ultra thick,white,line width=3pt] (-0.7,0.7)..controls(0.7,0.7)..(0.7,-0.7) node[pos=0.2,above,black,scale=0.7]{$k$}
(-0.7,0.7)..controls(-0.7,-0.7)..(0.7,-0.7)
node[pos=0.8,below,black,scale=0.7]{$1$};
\draw (-1.5,-1.5) node[below,black,scale=0.7]{$1$};
\draw (1.5,-1.5) node[below,black,scale=0.7]{$k+1$};
\end{tric}}

\def\BraidLK
{\begin{tric}
\draw (-1.5,-1.5)--(1.5,1.5);
\draw[double=darkblue,ultra thick,white,line width=3pt] (-1.5,1.5)--(1.5,-1.5);
\draw (-1.5,-1.5) node[below,black,scale=0.7]{$l+1$};
\draw (1.5,-1.5) node[below,black,scale=0.7]{$k+1$};
\end{tric}}

\def\BraidLKa
{\begin{tric}
\draw (-1.7,-1.7)--(-1.2,-1.2) (1.7,1.7)--(1.2,1.2) (1.7,-1.7)--(1.2,-1.2) (-1.7,1.7)--(-1.2,1.2);

\draw (1.2,1.2)..controls(-0.5,0.5)..(-1.2,-1.2) node[pos=0.15,above,black,scale=0.7]{$l$}
(1.2,1.2)..controls(0.5,-0.5)..(-1.2,-1.2)
node[pos=0.85,below,black,scale=0.7]{$1$};

\draw[double=darkblue,ultra thick,white,line width=3pt] (-1.2,1.2)..controls(0.5,0.5)..(1.2,-1.2) node[pos=0.2,above,black,scale=0.7]{$k$}
(-1.2,1.2)..controls(-0.5,-0.5)..(1.2,-1.2)
node[pos=0.85,below,black,scale=0.7]{$1$};

\draw (-1.7,-1.7) node[below,black,scale=0.7]{$l+1$};
\draw (1.7,-1.7) node[below,black,scale=0.7]{$k+1$};
\draw (-1.7,1.7) node[above,black,scale=0.7]{$k+1$};
\draw (1.7,1.7) node[above,black,scale=0.7]{$l+1$};
\end{tric}}

Consequently, the braiding of strands labeled by $k,l$ can be expressed by the following relation.

\begin{definition}[{\cite[Section 5.9]{BERT21}}]
 The braiding of two arbitrarily labeled strands can be defined inductively by the following formula.
 \begin{equation}\label{klbraiding}
 \BraidLK :=\frac{1}{[l+1][k+1]} \BraidLKa, \quad 0 \leq k,l \leq n-1.    
 \end{equation}
 
\end{definition}

 A simple closed formula for the braiding of strands labeled $k$ and $l$ analogous to those in \cite{MOY98,CKM14} appears to be an open problem and should involve the more general flow vertices with incident strands labeled by $a,b,a+b$ in \cite[Definition 3.2]{BERT21}. However, our results will only require such a formula in the special case that $l=1.$ We obtain the following formula, and provide its full proof in Appendix \ref{Braidproof}.

\begin{thm}\label{Braiding theorem}
       \begin{equation} \label{1kbrading}
       (-1)^k \Braid :=   \Braida - q^{-1} \Braidb  - \frac{q^{n-k+1}}{[n-k+1]} \Braidc
   \end{equation}
\end{thm}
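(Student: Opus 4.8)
The plan is to prove (\ref{1kbrading}) by induction on $k$, using the inductive definition (\ref{klbraiding}) of the braiding together with the defining relations of $\Web(\spn)$. For the base case $k=1$ the statement should reduce to the known formula (\ref{InitialBraid}) for $\BraidOneOne$ (equivalently its inverse $\BraidOneOneInv$): one checks that setting the rung label to $0$ turns $\Braida$, $\Braidb$, $\Braidc$ into $\OneOneTwo$, $\IdWeb$, $\CupCap$ respectively, taking care with the normalization of the non-generating trivalent vertex with boundary $(1,k,k-1)$ through which $\Braidc$ is built. It is reassuring, if somewhat tedious, to also confirm $k=2$ by hand to anchor the induction.

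For the inductive step I would suppose (\ref{1kbrading}) holds for the crossing of strands labeled $1$ and $k$, with $1\le k\le n-1$, and specialize the recursion (\ref{klbraiding}) to $l=0$. This presents the braiding of a $1$-labeled strand with a $(k+1)$-labeled strand as $\tfrac{1}{[k+1]}$ times the web obtained by splitting the $(k+1)$-strand into a $k$-labeled and a $1$-labeled strand (via a rotation of the generating vertex $\Vertd$), sliding the $1$-labeled strand past the $k$-labeled strand and then past the remaining $1$-labeled strand, and re-merging. The crossings that appear here are precisely the "under" crossing $\Braid$ for label $k$ and the crossing $\BraidOneOneInv$, so the inductive hypothesis and (\ref{InitialBraid}) apply verbatim. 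Substituting the three-term inductive formula and the three-term expansion of $\BraidOneOneInv$ then produces a linear combination of about nine basic webs, each sandwiched between the splitting and merging vertices of the $(k+1)$-strand.

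Each of these webs must then be reduced to the three target webs of the $(k+1)$-case — the $I$-web factoring through $k+2$, the $H$-web with $k$-labeled rung, and the $I$-web with $k$-labeled rung — using the associativity relation (\ref{defskein}d) to fuse adjacent vertices, the bigon relation (\ref{defskein}c) to delete $\{1,j{-}1\}$-bigons at the price of a factor $[j]$, the relations (\ref{defskein}a,b) to evaluate closed pieces, and, crucially, the square relation (\ref{defskein}e) together with the deduced relation (\ref{deducedskein}) to rewrite the $H$-type and "mixed" webs that do not immediately appear in canonical form. The $\CupCap$-term of (\ref{InitialBraid}) contributes a turnback that, after (\ref{defskein}c), feeds the $I$-web with $k$-rung; the $\OneOneTwo$-term feeds, through (\ref{defskein}d)–(\ref{defskein}e), the $I$-web through $k+2$ and the lower webs (and any web acquiring a strand of label $n+1$ vanishes, which is what makes the $k=n$ endpoint come out right). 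Collecting terms should yield the overall sign $(-1)^{k+1}$, the stable coefficient $-q^{-1}$ on the $H$-web with $k$-rung, and the coefficient $-q^{n-k}/[n-k]$ on the $I$-web with $k$-rung, the latter being forced by an elementary quantum-integer identity relating $q^{n-k+1}/[n-k+1]$ (from the hypothesis), $q^{\pm n}/[n]$ (from (\ref{InitialBraid})), and $[n-k]/[n-k+1]$, $[n-k]/[n]$ (from (\ref{defskein}e) and (\ref{deducedskein})).

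The hard part will be this combined reduction and bookkeeping in the inductive step: every web in the expansion has to be driven to the canonical triple, which is not automatic, since several of them reach canonical form only after a carefully chosen application of (\ref{defskein}e) and (\ref{deducedskein}), and one must simultaneously verify the quantum-integer identity that advances the coefficient of the lower $I$-web from level $k$ to level $k+1$ as well as the running sign. As independent checks I would confirm that (\ref{1kbrading}) and the analogous formula for the opposite crossing compose to the identity (the Reidemeister II move), and that the right-hand side has the expected eigenvalues on the three summands $V_{\varpi_1+\varpi_k}$, $V_{\varpi_{k+1}}$, $V_{\varpi_{k-1}}$ of $V_1\otimes V_k$ (up to sign $q$, $q^{-k}$, and $q^{-(2n-k+2)}$, as dictated by the ribbon twist of $U_q(\spn)$). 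Indeed, this eigenvalue decomposition would give an alternative, more representation-theoretic derivation of (\ref{1kbrading}) once $\Braida$, $\Braidb$, $\Braidc$ are expressed in terms of the canonical projections onto those summands via (\ref{defskein}c) and (\ref{defskein}e).
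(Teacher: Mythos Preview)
Your proposal is correct and follows essentially the same route as the paper. The paper's proof in Appendix~\ref{Braidproof} is precisely the induction you describe: base case $k=1$ from (\ref{InitialBraid}), inductive step by specializing (\ref{klbraiding}) to $l=0$, expanding the two crossings via the inductive hypothesis and the $1$--$1$ braiding, and then reducing the resulting seven nonzero webs (two of your nine vanish immediately by (\ref{VanishBigon})) to the canonical triple using (\ref{defskein}c--e), (\ref{deducedskein}), and the flow-vertex relations (\ref{FlowVert})--(\ref{FlowAssoci}); the final quantum-integer identities advancing the coefficients from level $k$ to $k+1$ are checked by Sage rather than by hand.
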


\begin{proof}
See Appendix \ref{Braidproof}.
\end{proof}

\begin{proposition}
   \begin{equation*} 
       (-1)^k \Braidre :=   \Braida - q \Braidb  - \frac{q^{-n+k-1}}{[n-k+1]} \Braidc
   \end{equation*}
\end{proposition}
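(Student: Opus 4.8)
The plan is to deduce this formula from Theorem~\ref{Braiding theorem} by applying the bar involution $q\mapsto q^{-1}$, which interchanges over- and under-crossings. Concretely, let $\overline{(\,\cdot\,)}$ be the monoidal functor on $\Web(\spn)$ that is the identity on every object and on every web, and that acts on the scalars occurring in a morphism by the ring automorphism sending $q$ to $q^{-1}$. This is well defined because every coefficient appearing in the defining relations (\ref{defskein}a)--(\ref{defskein}e) is a ratio of quantum integers, and each $[m]=(q^{m}-q^{-m})/(q-q^{-1})$ is invariant under $q\mapsto q^{-1}$; hence all the relations are preserved. (If one prefers not to assume that $\mathcal{R}$ itself admits such an automorphism, one runs the argument over the generic coefficient ring $\mathbb{Z}[q^{\pm1},[2]^{-1},\dots,[n]^{-1}]$ and specializes at the end, since both Equation~(\ref{1kbrading}) and the asserted identity are equalities of Laurent polynomials in $q$.)

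The key point is that $\overline{(\,\cdot\,)}$ sends each positive crossing to the corresponding negative crossing. For two strands labeled $1$ this is immediate: applying $q\mapsto q^{-1}$ to $\BraidOneOne=q\,\IdWeb+\frac{q^{-n}}{[n]}\,\OneOneTwo-\CupCap$ of Equation~(\ref{InitialBraid}) produces exactly the inverse crossing $q^{-1}\,\IdWeb+\frac{q^{n}}{[n]}\,\OneOneTwo-\CupCap=\BraidOneOneInv$ recorded just after it, since $[n]$ is bar-invariant. For arbitrary labels, the crossing of strands $l+1$ and $k+1$ is defined inductively by Equation~(\ref{klbraiding}) out of $1,l$- and $1,k$-crossings and of webs, with the bar-invariant coefficient $1/([l+1][k+1])$; inverting (\ref{klbraiding}) gives the same inductive expression for the inverse crossing but with each constituent $1,l$- and $1,k$-crossing replaced by its inverse. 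Hence, by induction on the labels, the bar image of every positive crossing is the corresponding negative one; in particular $\overline{\Braid}$, the bar image of the $1,k$-crossing of Theorem~\ref{Braiding theorem}, is the opposite crossing $\Braidre$.

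It now suffices to apply $\overline{(\,\cdot\,)}$ to the identity of Theorem~\ref{Braiding theorem}. The webs $\Braida,\Braidb,\Braidc$ are fixed, the sign $(-1)^{k}$ is fixed, the quantum integer $[n-k+1]$ is fixed, while $q^{-1}\mapsto q$ and $q^{n-k+1}\mapsto q^{-(n-k+1)}=q^{-n+k-1}$. This yields
\[
(-1)^{k}\,\Braidre=\Braida-q\,\Braidb-\frac{q^{-n+k-1}}{[n-k+1]}\,\Braidc,
\]
which is the assertion. There is no serious obstacle here; the only thing requiring attention is bookkeeping of conventions --- one must confirm that the diagram $\Braidre$ drawn in the statement is indeed the opposite crossing of $\Braid$, and that (\ref{klbraiding}) interacts with inversion as claimed --- both of which are routine comparisons of pictures. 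For completeness, an alternative self-contained route avoids the bar involution and instead verifies directly that the proposed element is a two-sided inverse of the element of Theorem~\ref{Braiding theorem} under composition (a Reidemeister~II identity), by expanding the resulting product of sixteen webs and reducing with the bigon relation (\ref{defskein}c), relation (\ref{defskein}e), the deduced relation (\ref{deducedskein}), and the loop value (\ref{defskein}a); the bar-involution argument is preferable precisely because it sidesteps this computation.
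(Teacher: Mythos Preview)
Your proof is correct and follows the same approach as the paper's: both argue that the involution $q\mapsto q^{-1}$ interchanges the positive and negative crossings (first for $1,1$-crossings via (\ref{InitialBraid}), then inductively via (\ref{klbraiding})), and then apply this involution to the formula of Theorem~\ref{Braiding theorem}. Your write-up is simply more detailed, making explicit the bar-invariance of the defining relations and the inductive step that the paper leaves implicit.
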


\begin{proof}
    The two braidings in equation (\ref{InitialBraid}) satisfy the property that one crossing is sent to the other by making the switch $q \leftrightarrow q^{-1}.$ Since the braiding in (\ref{klbraiding}) is defined inductively in terms of the braiding of $1\text{-labeled}$ strands in equation (\ref{InitialBraid}), the braiding obtained by switching all crossings in (\ref{klbraiding}) is obtained by making the switch $q \leftrightarrow q^{-1}.$
\end{proof}

\subsection{Type C skein algebra}

In this section, we provide the definition of the $C_n$ skein module of an oriented 3-manifold and $C_n$ skein algebra of a surface. Let $M$ be an oriented $3\text{-manifold}.$

\begin{definition}
A $C_n$ web $W$ in $M$ is an embedding of a closed $C_n$ web equipped with a ribbon structure, which is a thin oriented surface in $M$ which deformation retracts onto $W.$ By convention, we allow the empty web and webs without vertices, which are the same as framed links.
\end{definition}

\begin{definition}
We define the $C_n$ skein module of $M$ denoted by $\mathcal{S}_q^{C_n}(M)$ to be the module spanned over $\mathcal{R}$ by isotopy classes of $C_n$ webs in $M$ modulo the relations (\ref{defskein}a)-(\ref{defskein}e),  \eqref{InitialBraid}, and \eqref{klbraiding}.
\end{definition}

\begin{definition}
In the case that $M=\Sigma \times (0,1)$ is a thickened oriented surface, the skein module of $M$ carries an algebra structure called the skein algebra of $\Sigma,$ denoted by $\mathcal{S}_q^{C_n}(\Sigma)$. In the skein algebra, the product of two webs $[W_1][W_2]$ is given by isotoping $W_1$ so that it is contained in $\Sigma \times (1/2,1),$ isotoping $W_2$ so that is contained in $\Sigma \times (0,1/2)$ and then taking $W_1W_2=[W_1\cup W_2]\in \mathcal{S}_q^{C_n}(\Sigma).$ 
\end{definition}

When working with a skein algebra of a surface, we will often work with projections of the webs onto the surface and perform computations with such web diagrams. We note that the naturality of the braiding in $\Web(\spn)$ follows from that of $\FRep(U_q(\spn))$, and consequently the web relations imply the following types of Reidemeister moves hold for all strand labels:

%%%%%%%%%%%%%%%%%%%%%%%%%%%%%
%%%%%%%%%%%%%%%%%%%%%%%%%%%%%
%%%%%%%%%%%%%%%%%%%%%%%%%%%%%
%%%%%%%%%%%%%%%%%%%%%%%%%%%%%
%%%%%%%%%%%%%%%%%%%%%%%%%%%%%

\def\ReideA
{\begin{tric}
\draw  (-1.5,1.5)..controls(0,-1.5)and(1.5,-1.5)..(1.5,0);
\draw[double=darkblue,ultra thick,white,line width=3pt] (-1.5,-1.5)..controls(0,1.5)and(1.5,1.5)..(1.5,0);
\end{tric}
}

\def\ReideotherA
{\begin{tric}
\draw (-1.5,-1.5)..controls(0,1.5)and(1.5,1.5)..(1.5,0);
\draw [double=darkblue,ultra thick,white,line width=3pt] (-1.5,1.5)..controls(0,-1.5)and(1.5,-1.5)..(1.5,0);
\end{tric}
}

\def\ReideAa
{\begin{tric}
\draw  [scale=0.7] 
(-2,2)..controls(-1,1)and(-1,-1)..(-2,-2) node[left,black,midway,scale=0.7]{$1$}
(1,-2)..controls(-0.5,-2)and(-0.5,2)..(1,2) node[right,black,midway,scale=0.7]{$1$}
(1,-2)..controls(2.5,-2)and(2.5,2)..(1,2) ;
\end{tric}
}

\def\ReideAb
{\begin{tric}
\draw[scale=0.7] (-1.732,-2)--(0,-1)  (0,1)--(-1.732,2) 
(0,-1)..controls(2,-1)and(2,1)..(0,1) node[left,black,midway,scale=0.7]{$1$}
(-1.732,-2)node[below,black,scale=0.7]{$1$}
(-1.732,2) node[above,black,scale=0.7]{$1$}
(0,-1)--(0,1)node[left,midway,black,scale=0.7]{$2$};
\end{tric}
}

\def\ReideAc
{\begin{tric}
\draw  [scale=0.7] 
(-2,2)..controls(-0.5,1)..(1,1) node[above,black,midway,scale=0.7]{$1$}
(-2,-2)..controls(-0.5,-1)..(1,-1)
(1,1)..controls(2,1)and(2,-1)..(1,-1); 
\end{tric}
}

\def\ReideAhalf
{\begin{tric}
\draw[scale=0.7] (-1.5,-1.5)..controls(-1.5,-1)and(-1,-0.5)..(0,-0.5)
                 (1.5,-1.5)..controls(1.5,-1)and(1,-0.5).. (0,-0.5)
                 (0,-0.5)--(0,1)node[left,black,midway,scale=0.7]{$2$}; 

\draw[scale=0.7] (-1.5,-1.5)..controls(-1.5,-3)and(1.5,-3) ..(1.5,-4.5)
node[below,black,scale=0.7]{$1$};
\draw[scale=0.7, double=darkblue,ultra thick,white,line width=3pt] (1.5,-1.5)..controls(1.5,-3)and(-1.5,-3) ..(-1.5,-4.5)node[below,black,scale=0.7]{$1$};
\end{tric}
}

\def\ReidemCa
{\begin{tric}
\draw  (-2,2)--(2,-2);
\draw [double=darkblue,ultra thick,white,line width=3pt] (0,-2)..controls(-2,-1)and(-2,1)..(0,2);
\draw [double=darkblue,ultra thick,white,line width=3pt](-2,-2)--(2,2) ;
\end{tric}
}

\def\ReidemCb
{\begin{tric}
\draw  (-2,2)--(2,-2);
\draw [double=darkblue,ultra thick,white,line width=3pt] (0,-2)..controls(2,-1)and(2,1)..(0,2);
\draw [double=darkblue,ultra thick,white,line width=3pt](-2,-2)--(2,2) ;
\end{tric}
}

\def\ReidemDa
{\begin{tric}
\draw        (90:0)--(90:2) 
           (306:2)--(90:0)--(234:2); 
\draw [double=darkblue,ultra thick,white,line width=3pt]   (162:2)..controls(126:1.6)and(54:1.6)..(18:2);
\end{tric}
}

\def\ReidemDb
{\begin{tric}
\draw        (90:0)--(90:2) 
           (306:2)--(90:0)--(234:2) ;
\draw [double=darkblue,ultra thick,white,line width=3pt] (162:2)..controls(234:1.7)and(306:1.7)..(18:2);
\end{tric}
}

\def\ReidemDaUnder
{\begin{tric}
\draw (162:2)..controls(126:1.6)and(54:1.6)..(18:2);
\draw    [double=darkblue,ultra thick,white,line width=3pt]                 (90:0)--(90:2) 
           (306:2)--(90:0)--(234:2) ;
\end{tric}
}

\def\ReidemDbUnder
{\begin{tric}
\draw (162:2)..controls(234:1.7)and(306:1.7)..(18:2);
\draw     [double=darkblue,ultra thick,white,line width=3pt]
           (90:0)--(90:2) 
           (306:2)--(90:0)--(234:2) ;
\end{tric}
}

\def\ReideAhalfother
{\begin{tric}
\draw[scale=0.7] (-1.5,-1.5)..controls(-1.5,-1)and(-1,-0.5)..(0,-0.5)
                 (1.5,-1.5)..controls(1.5,-1)and(1,-0.5).. (0,-0.5)
                 (0,-0.5)--(0,1)node[left,black,midway,scale=0.7]{$2$}; 

\draw[scale=0.7]  (1.5,-1.5)..controls(1.5,-3)and(-1.5,-3) ..(-1.5,-4.5) 
node[below,black,scale=0.7]{$1$};
\draw[scale=0.7, double=darkblue,ultra thick,white,line width=3pt]  (-1.5,-1.5)..controls(-1.5,-3)and(1.5,-3) ..(1.5,-4.5)node[below,black,scale=0.7]{$1$};
\end{tric}
}

\def\ReideB
{\begin{tric}
\draw [scale=0.7] (1.5,1.5)..controls(1.5,0)and(-1.5,0) ..(-1.5,-1.5) ;
\draw[scale=0.7, double=darkblue,ultra thick,white,line width=3pt] (-1.5,1.5)..controls(-1.5,0)and(1.5,0) ..(1.5,-1.5) ;

\draw[scale=0.7] (-1.5,-1.5)..controls(-1.5,-3)and(1.5,-3) ..(1.5,-4.5) ;
\draw[scale=0.7, double=darkblue,ultra thick,white,line width=3pt] (1.5,-1.5)..controls(1.5,-3)and(-1.5,-3) ..(-1.5,-4.5);
\end{tric}
}

\def\ReideBmirror
{\begin{tric}
\draw [scale=0.7] (-1.5,1.5)..controls(-1.5,0)and(1.5,0) ..(1.5,-1.5) ;
\draw[scale=0.7, double=darkblue,ultra thick,white,line width=3pt] (1.5,1.5)..controls(1.5,0)and(-1.5,0) ..(-1.5,-1.5) ;

\draw[scale=0.7] (1.5,-1.5)..controls(1.5,-3)and(-1.5,-3) ..(-1.5,-4.5) ;
\draw[scale=0.7, double=darkblue,ultra thick,white,line width=3pt] (-1.5,-1.5)..controls(-1.5,-3)and(1.5,-3) ..(1.5,-4.5) ;
\end{tric}
}

\def\ReideBa
{\begin{tric}
\draw  [scale=0.7] 
(-2,2)..controls(-1,1)and(-1,-1)..(-2,-2) 
(2,-2)..controls(1,-1)and(1,1)..(2,2) ;
\end{tric}
}

\def\SmallTrianglea
{\begin{tric}
\draw [scale=0.7] 
(1.16,-2) node[below,black,scale=0.7]{$1$}
(1.16,-2)--(0,0) 
(-1.16,-2) node[below,black,scale=0.7]{$1$}
(0,0)--(-1.16,-2)
(0,0)--(0,2) node[left,midway,black,scale=0.7]{$2$};
\end{tric}
}

\def\String
{\begin{tric}
\draw  (0,1.5)--(0,-1.5);
\end{tric}
}

 \begin{align}
    \ReideB = \ \ReideBa  \ \ \ \ \ \ &, \ \ \ \ \ \ \ \ReideBmirror = \ \ReideBa  \label{ReideTwo} \\
   \ReidemCa &= \ReidemCb \label{ReideThree} \\
\ReidemDa &= \ \ \ReidemDb \ \  .  \label{ReidemPar}
\end{align}

%%%%%%%%%%%%%%%%%%%%%%%%%%%%%
%%%%%%%%%%%%%%%%%%%%%%%%%%%%%
%%%%%%%%%%%%%%%%%%%%%%%%%%%%%
%%%%%%%%%%%%%%%%%%%%%%%%%%%%%
%%%%%%%%%%%%%%%%%%%%%%%%%%%%%

\section{Type C power sum polynomials}

\subsection{Elementary symmetric and power sum polynomials in Type $A$} 

We recall here the basic definitions of elementary symmetric polynomials, as well as the power elementary polynomials described in \cite{BH24}.

Suppose that $X=\{x_1,...,x_{2n}\}$ is a set of $2n$ commuting variables. Let $X^{(N)}=\{x_1^N, \dots, x_{2n}^N\}$ denote the set of $N\text{-th}$ powers of the variables of $X.$ For each $k=1,...,2n$ we define the elementary symmetric polynomial $E_{k,n}$ in the variables of $X$ to be the sum of all distinct $k\text{-fold}$ products of variables of $X:$

\begin{equation}
E_{k,n}(X)=\sum_{1\leq i_1<i_2< \cdots < i_k \leq 2n} x_{i_1}x_{i_2} \cdots x_{i_k}.
\end{equation}

Similarly, we define

\begin{equation}
E_{k,n}^{(N)}(X):=E_{k,n}(X^{(N)})=\sum _{1\leq i_1<i_2< \cdots < i_k \leq 2n} x_{i_1}^Nx_{i_2}^N \cdots x_{i_k}^N.
\end{equation}

Further, we use the convention that $E_{0,n}(X)=1$ and $E_{i,n}(X)=0$ whenever $i< 0.$ We have been using $n$ instead of $2n$ in the subscript of $E_{k,n}$ to align our notation with our upcoming discussion of the $C_n$ case.

For each $k=1,...,2n,$ the power elementary polynomial $P_{2n}^{(N,k)}\in \mathbb{Z}[e_1, e_2, \dots, e_{2n}]$ from \cite{BH24} is the unique polynomial satisfying

\begin{equation}\label{Type A power sum}
P_{2n}^{(N,k)}(E_{1,n}(X),E_{2,n}(X), \dots,E_{2n,n}(X))=E_{k,n}^{(N)}(X).
\end{equation}

If we view $x_1,...,x_{2n}$ as eigenvalues of a matrix $A$ in $GL_{2n}(\mathbb{C})$, then $E_{1,n}(X)=\text{tr}(A)$ and $E_{2n,n}(X)=\det(A)$ while $E_{1,n}(X^{(N)})=\text{tr}(A^N).$

Consider the ring given by the image of the quotient
\begin{align*}
\widehat{}: \mathbb{Z}[x_1,\dots x_{2n}] &\rightarrow \mathbb{Z}[x_1,\dots,x_{2n}]/(x_1x_2 \cdots x_{2n}=1)\\
x_i & \mapsto \widehat{x}_i.
\end{align*}
 Let $\widehat{X}=\{\widehat{x}_1,\dots,\widehat{x}_{2n}\}.$ Since $\widehat{x}_{1} \widehat{x}{_2} \cdots \widehat{x}_{2n}=1,$ we can view the variables $\widehat{x}_{i}$ as eigenvalues of a matrix $A$ in $SL_{2n}(\mathbb{C}).$ In this case, $E_{2n,n}(\widehat{X})=1$ and we define the power elementary polynomial $\hat{P}_{2n}^{(N,k)} \in \mathbb{Z}[e_1,e_2,\dots,e_{2n-1}]$ to be

\begin{equation}
\hat{P}_{2n}^{(N,k)}(e_{1},\cdots,e_{2n-1})=P_{2n}^{(N,k)}(e_{1},\cdots,e_{2n-1},1),
\end{equation}

which satisfies

\begin{equation}
\hat{P}_{2n}^{(N,k)}(E_{1,n}(\widehat{X}), E_{2,n}(\widehat{X}), \dots E_{2n-1,n}(\widehat{X}))=E_{k,n}^{(N)}(\widehat{X})
\end{equation}

for each $k=1,\dots,2n-1.$

The elementary symmetric polynomials $E_{i,n}(\widehat{X})$ may be viewed as the character polynomials for $SL_{2n}(\mathbb{C})$ as they record the weight space decompositions of the fundamental representations given by the $i\text{-th}$ exterior powers of the vector representation $\mathbb{C}^{2n}.$

\subsection{Character and power fundamental polynomials in type $C$}

We will describe how to replace the polynomials $E_{i,n}$ and $\hat{P}_{2n}^{(N,k)}$ from the type $A$ case with polynomials $F_{i,n}$ and $Q_{2n}^{(N,k)}$ suitable for type $C.$ The algebraic group $Sp_{2n}(\mathbb{C})$ is a rank $n$ algebraic subgroup of $SL_{2n}(\mathbb{C})$. For a matrix $A$ in $Sp_{2n}(\mathbb{C}) \leq SL_{2n}(\mathbb{C}),$ we can view its eigenvalues as being modeled by variables in the set $\Lambda=\{\lambda_1^{\pm 1}, \dots, \lambda_n^{\pm 1}\}.$ We will consider the ring $\mathbb{Z}[\Lambda]:=\mathbb{Z}[\lambda_1^{\pm 1}, \dots, \lambda_n^{\pm 1}].$

The role that the exterior powers of $\mathbb{C}^{2n}$ played for $SL_{2n}(\mathbb{C})$ are now played by the fundamental representations of $Sp_{2n}(\mathbb{C}).$ If $V_{\varpi_i}$ is the $i\text{-th}$ fundamental representation of $Sp_{2n}(\mathbb{C}),$ then we have the following isomorphism of $Sp_{2n}$ representations: $\wedge^i(\mathbb{C}^{2n})=V_{\varpi_i}\oplus \wedge^{i-2}(\mathbb{C}^{2n}).$ Consequently, if $F_{i,n}\in \mathbb{Z}[\Lambda]$ represents the character polynomial of $V_{\varpi_i},$ we are motivated to obtain it from the elementary symmetric polynomials according to the following definition.

\begin{definition}\label{F_k,n}
For $1\leq i\leq n$ the polynomial $F_{k,n}\in \mathbb{Z}[\Lambda]$ is given by

\begin{equation}\label{F_k,n equation}
F_{k,n}(\lambda_1^{\pm 1},\dots,\lambda_n^{\pm 1})=E_{k,n}(\lambda_1^{\pm 1},\dots,\lambda_n^{\pm 1})-E_{k-2,n}(\lambda_1^{\pm 1},\dots,\lambda_n^{\pm 1}).     
\end{equation}
\end{definition}

For the rest of the paper we will use the shorthand $E_{i,n}:=E_{i,n}(\lambda_1^{\pm 1},...,\lambda_n^{\pm 1})$ and $F_{i,n}:=F_{i,n}(\lambda_1^{\pm 1},...,\lambda_n^{\pm 1}).$ Note the convention that $E_{0,n}=1$ and $E_{i,n}=0$ for all $i <0$ gives the convention that $F_{0,n}=1$ and $F_{i,n}=0$ for all $i<0.$

Note that Definition \ref{F_k,n} implies that $F_{1,n}=E_{1,n}.$ One can then use the relation $E_{k,n}=F_{k,n}+E_{k-2,n}$ to solve inductively for the polynomials $E_{i,n}$ in terms of only the polynomials $F_{i,n}$ to obtain

\begin{equation}\label{E in terms of F}
E_{i,n}=F_{i,n}+F_{i-2,n}+F_{i-4,n}+\cdots+F_{i-2 \lfloor\frac{i}{2} \rfloor,n }.
\end{equation}

Since $\prod_{\lambda \in \Lambda} \lambda=1,$ we can use the existence of the polynomials $\hat{P}_{2n}^{(N,k)}$ and the relationship between the polynomials $F_{i,n}$ and $E_{j,n}$ to construct Type $C$ analogues of power sum polynomials $Q_{2n}^{(N,k)} \in \mathbb{Z}[f_1,...,f_n].$

\begin{definition}\label{Type C polynomial}
For $k=1,\dots, n,$ the Type $C$ power fundamental polynomial $Q_{2n}^{(N,k)}(f_1,\dots,f_n)\in \mathbb{Z}[f_1,\dots,f_n]$ is defined by
\begin{equation*}
Q_{2n}^{(N,k)}(f_1,\dots,f_n)=\hat{P}_{2n}^{(N,k)}(\tilde{f}_1, \dots,\tilde{f}_{n-1},\tilde{f}_n,\tilde{f}_{n-1},\dots\tilde{f}_1)-\hat{P}_{2n}^{(N,k-2)}(\tilde{f}_1, \dots,\tilde{f}_{n-1},\tilde{f}_n,\tilde{f}_{n-1},\dots\tilde{f}_1),
\end{equation*}
where $\tilde{f}_i:=f_i+f_{i-2}+f_{i-4}+\cdots+ f_{i-2\lfloor\frac{i}{2}\rfloor}.$

\end{definition}

We next verify that the polynomial $Q_{2n}^{(N,k)}$ satisfies a property analogous to the main property of the polynomial $\hat{P}_{2n}^{(N,k)}.$

\begin{proposition}\label{Property of Type C polynomial}
The Type $C$ power fundamental polynomial $Q_{2n}^{(N,k)}(f_1,...,f_n)$ is the unique polynomial in $\mathbb{Z}[f_1,...,f_n]$ satisfying the property that

\begin{equation}\label{power sum}
Q_{2n}^{(N,k)}(F_{1,n},...,F_{n,n})=F_{k,n}^{(N)}(\Lambda),
\end{equation}    
\end{proposition}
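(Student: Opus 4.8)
The plan is to substitute $f_j=F_{j,n}$ into the formula for $Q_{2n}^{(N,k)}$ given in Definition~\ref{Type C polynomial} and to unwind the result, the decisive point being that the elementary symmetric polynomials of the inversion-closed set $\Lambda=\{\lambda_1^{\pm1},\dots,\lambda_n^{\pm1}\}$ are palindromic. Before starting I would fix the conventions forced by $F_{0,n}=1$ and $F_{i,n}=0$ for $i<0$: set $f_0:=1$ and $f_j:=0$ for $j<0$ (so that each $\tilde f_i$ lies in $\mathbb Z[f_1,\dots,f_n]$), set $\hat P_{2n}^{(N,0)}:=1$ and $\hat P_{2n}^{(N,j)}:=0$ for $j<0$ with $E_{0,n}^{(N)}:=1$ and $E_{j,n}^{(N)}:=0$ for $j<0$, and put $F_{k,n}^{(N)}(\Lambda):=F_{k,n}(\Lambda^{(N)})=E_{k,n}^{(N)}(\Lambda)-E_{k-2,n}^{(N)}(\Lambda)$, the last equality being Definition~\ref{F_k,n} applied to the set $\Lambda^{(N)}$.

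With these conventions the verification of~(\ref{power sum}) proceeds in three steps. First, substituting $f_j=F_{j,n}$ into the linear form $\tilde f_i=f_i+f_{i-2}+\cdots+f_{i-2\lfloor i/2\rfloor}$ produces exactly $E_{i,n}$, by the inversion identity~(\ref{E in terms of F}). Second, since $\Lambda$ is stable under $\lambda\leftrightarrow\lambda^{-1}$ and $\prod_{\mu\in\Lambda}\mu=1$, the generating polynomial $\prod_{i=1}^{n}(1+\lambda_it)(1+\lambda_i^{-1}t)=\sum_{j=0}^{2n}E_{j,n}\,t^{j}$ is palindromic, so $E_{2n-j,n}=E_{j,n}$ for every $j$; hence the mirrored $(2n{-}1)$-tuple $(\tilde f_1,\dots,\tilde f_{n-1},\tilde f_n,\tilde f_{n-1},\dots,\tilde f_1)$ becomes, after the substitution, precisely $(E_{1,n}(\Lambda),E_{2,n}(\Lambda),\dots,E_{2n-1,n}(\Lambda))$. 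Third, because $\prod_{\mu\in\Lambda}\mu=1$ we may specialize $\widehat X=\Lambda$ in the defining property of $\hat P_{2n}^{(N,j)}$, which gives $\hat P_{2n}^{(N,k)}(E_{1,n}(\Lambda),\dots,E_{2n-1,n}(\Lambda))=E_{k,n}^{(N)}(\Lambda)$ and likewise with $k$ replaced by $k-2$; subtracting these two identities and invoking the convention above yields $Q_{2n}^{(N,k)}(F_{1,n},\dots,F_{n,n})=E_{k,n}^{(N)}(\Lambda)-E_{k-2,n}^{(N)}(\Lambda)=F_{k,n}^{(N)}(\Lambda)$, which is~(\ref{power sum}). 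This also establishes existence; it remains to prove uniqueness.

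For uniqueness it suffices to show that the evaluation homomorphism $\mathbb Z[f_1,\dots,f_n]\to\mathbb Z[\Lambda]$, $f_i\mapsto F_{i,n}$, is injective, i.e.\ that $F_{1,n},\dots,F_{n,n}$ are algebraically independent over $\mathbb Q$. By the triangular relations~(\ref{E in terms of F}) one has $\mathbb Q[F_{1,n},\dots,F_{n,n}]=\mathbb Q[E_{1,n},\dots,E_{n,n}]$, and writing $\mu_i=\lambda_i+\lambda_i^{-1}$ gives $\prod_{i=1}^n(1+\lambda_it)(1+\lambda_i^{-1}t)=\prod_{i=1}^n(1+\mu_it+t^2)$, from which each $E_{j,n}$ with $1\le j\le n$ equals $e_j(\mu_1,\dots,\mu_n)$ plus a polynomial in $e_1(\mu),\dots,e_{j-1}(\mu)$; since $\mathbb Q(\mu_1,\dots,\mu_n)\subseteq\mathbb Q(\lambda_1,\dots,\lambda_n)$ is a finite field extension, the $\mu_i$ are algebraically independent over $\mathbb Q$, hence so are the $e_j(\mu)$, hence so are the $E_{j,n}$ and therefore the $F_{i,n}$. (Equivalently, one may note that $F_{i,n}$ is the character of the $i$-th fundamental representation of $Sp_{2n}(\mathbb C)$ and that the representation ring of $Sp_{2n}(\mathbb C)$ is the polynomial ring on these fundamental characters.)

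Steps one and three are routine; the actual content is the palindromy $E_{2n-j,n}=E_{j,n}$ of step two, which is exactly what makes the mirrored argument list in Definition~\ref{Type C polynomial} the right choice. The only genuinely delicate point is the bookkeeping in the cases $k\le2$, where $\hat P_{2n}^{(N,k-2)}$ is evaluated at an index $\le0$; this is absorbed by the conventions fixed at the outset.
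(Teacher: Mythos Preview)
Your proof is correct and follows essentially the same route as the paper's: substitute $f_j=F_{j,n}$, use~(\ref{E in terms of F}) to turn $\tilde f_i$ into $E_{i,n}$, invoke the palindromy $E_{i,n}=E_{2n-i,n}$, apply the defining property of $\hat P_{2n}^{(N,\cdot)}$, and subtract; uniqueness comes from the algebraic independence of the $F_{i,n}$ via that of the $E_{i,n}$. You have simply filled in more detail than the paper does---the generating-function proof of palindromy and the $\mu_i=\lambda_i+\lambda_i^{-1}$ argument for algebraic independence are left implicit there.
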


\begin{proof}
The uniqueness follows from the algebraic independence of the $F_{i,n},$ which in turns follows from the algebraic independence of the $E_{i,n}.$ The fact that $Q_{2n}^{(N,k)}$ has integer coefficients follows from the facts that $\hat{P}_{2n}^{(N,k)}\in \mathbb{Z}[e_1,\dots, e_{2n-1}]$ has integer coefficients and that for each $i,$ $\tilde{f_i} \in \mathbb{Z}[f_1,\dots,f_n]$. 

We then check that our construction of $Q_{2n}^{(N,k)}$ in Definition \ref{Type C polynomial} satisfies the desired property (\ref{power sum}). By definition, we have that

\begin{equation*}
Q_{2n}^{(N,k)}(f_1,\dots, f_n)= \hat{P}_{2n}^{(N,k)}(\tilde{f}_1, \dots, \tilde{f}_n, \tilde{f}_{n-1}, \dots, \tilde{f}_1)- \hat{P}_{2n}^{(N,k-2)}(\tilde{f}_1, \dots, \tilde{f}_n, \tilde{f}_{n-1}, \dots, \hat{f}_1).\\
\end{equation*}

Evaluating at $(f_1,\dots,f_n)=(F_{1,n}, \dots, F_{n,n})$ yields

\begin{align*}
Q_{2n}^{(N,k)}(F_{1,n},\dots, F_{n,n})&\overset{(\ref{E in terms of F})}{=}\hat{P}_{2n}^{(N,k)}(E_{1,n},\dots ,E_{n,n}, E_{n-1,n}, \dots, E_{1,n})-\hat{P}_{2n}^{(N,k-2)}(E_{1,n}, \dots, E_{n,n}, E_{n-1,n}, \dots E_{1,n})\\
&=\hat{P}_{2n}^{(N,k)}(E_{1,n}, \dots, E_{2n-1,n})-\hat{P}_{2n}^{(N,k-2)}(E_{1,n}, \dots, E_{2n-1,n})\\
&\overset{(\ref{Type A power sum})}{=}E_{k,n}^{(N)}-E_{k-2,n}^{(N)}\\
&\overset{(\ref{F_k,n equation})}{=}F_{k,n}^{(N)},
\end{align*}

as required. In the second equality we used the fact that $E_{i,n}=E_{2n-i,n}$ for all $0 \leq i \leq n,$ since $\prod_{\lambda \in \Lambda} \lambda=1.$
\end{proof}

\subsection{Relations between $C_n$ and $C_{n-1}$ character polynomials}

Our overall goal is to use the polynomials $Q_{2n}^{(N,k)}$ to construct central elements in $C_n$ skein algebras. Our next step toward this goal is observing some identities among the polynomials $F_{i,n}$ which will later manifest themselves in corresponding skein identities. We first record the following observation, which will be useful to us in the subsequent Lemma \ref{evalue relations}. 

\begin{lemma}\label{fn,n-1}
For a set of $2n-2$ commuting formal variables of the form $\Lambda_{n-1}=\{\lambda_1,...,\lambda_{n-1},\lambda_1^{-1},...,\lambda_{n-1}^{-1}\}$, we have \begin{equation*}E_{n,n-1}(\Lambda_{n-1})=E_{n-2,n-1}(\Lambda_{n-1}).\end{equation*}
Consequently, \begin{equation*} F_{n,n-1}(\lambda_1^{\pm 1},...,\lambda_{n-1}^{\pm 1})=0. \end{equation*}
\end{lemma}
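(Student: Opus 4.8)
The plan is to prove the identity $E_{n,n-1}(\Lambda_{n-1}) = E_{n-2,n-1}(\Lambda_{n-1})$ by a direct combinatorial argument, and then deduce the statement about $F_{n,n-1}$ as an immediate consequence of Definition \ref{F_k,n}. First I would observe that $\Lambda_{n-1}$ consists of $2n-2$ variables, namely the $n-1$ variables $\lambda_i$ together with their $n-1$ inverses $\lambda_i^{-1}$. The elementary symmetric polynomial $E_{k,n-1}(\Lambda_{n-1})$ is the sum of all $k$-fold squarefree products of these $2n-2$ variables. The key structural fact is that $\prod_{\lambda \in \Lambda_{n-1}} \lambda = \prod_{i=1}^{n-1} \lambda_i \lambda_i^{-1} = 1$, so the top elementary symmetric polynomial $E_{2n-2,n-1}(\Lambda_{n-1}) = 1$, and more generally $E_{k,n-1}(\Lambda_{n-1}) = E_{2n-2-k,n-1}(\Lambda_{n-1})$ via the involution sending a $k$-subset $S \subseteq \Lambda_{n-1}$ to its complement, whose monomial is $\left(\prod_{\lambda \in S}\lambda\right)^{-1}$; combined with the further symmetry $\lambda_i \leftrightarrow \lambda_i^{-1}$ this gives $E_{k,n-1}(\Lambda_{n-1}) = E_{2n-2-k,n-1}(\Lambda_{n-1})$.

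Then I would simply specialize this palindromic symmetry at $k = n-2$: this yields $E_{n-2,n-1}(\Lambda_{n-1}) = E_{2n-2-(n-2),n-1}(\Lambda_{n-1}) = E_{n,n-1}(\Lambda_{n-1})$, which is exactly the claimed identity. The involution on subsets must be combined with the sign-free fact that $E_{k}$ of a set equals $E_k$ of the set of reciprocals when that set is closed under reciprocals (which $\Lambda_{n-1}$ is); concretely, complementing a $k$-subset $S$ of $\Lambda_{n-1}$ gives a $(2n-2-k)$-subset whose product is $\left(\prod_{\lambda\in S}\lambda\right)^{-1}$, and summing over all $S$ and then applying the reciprocal symmetry identifies $E_{k,n-1}(\Lambda_{n-1})$ with $E_{2n-2-k,n-1}(\Lambda_{n-1})$.

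Finally, for the consequence about $F_{n,n-1}$, I would invoke Definition \ref{F_k,n} with the index pair $(k,n) = (n, n-1)$, giving
\[
F_{n,n-1}(\lambda_1^{\pm 1}, \dots, \lambda_{n-1}^{\pm 1}) = E_{n,n-1}(\Lambda_{n-1}) - E_{n-2,n-1}(\Lambda_{n-1}) = 0
\]
by the identity just established. One small point of care: Definition \ref{F_k,n} is stated for $1 \le i \le n$ in the variable set with $n$ pairs, so here it is being applied in the $(n-1)$-pair setting with top index $i = n$, which is one above the usual range; this is harmless since $E_{k,m}$ is defined for all $k$, but I would note explicitly that we are using the defining formula \eqref{F_k,n equation} as the definition of $F_{n,n-1}$ rather than an instance covered by the original statement. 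The main (and only) obstacle is getting the palindromic symmetry of $E_{k}$ on a reciprocal-closed variable set stated cleanly; everything else is bookkeeping.
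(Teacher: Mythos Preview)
Your proof is correct and follows essentially the same approach as the paper: both use that the product of all variables in $\Lambda_{n-1}$ is $1$, so complementing a subset inverts its monomial, and then invoke the reciprocal-closure of $\Lambda_{n-1}$ to identify the resulting sum of inverses with the original sum. The only cosmetic difference is that you state the general palindromic identity $E_{k,n-1}(\Lambda_{n-1}) = E_{2n-2-k,n-1}(\Lambda_{n-1})$ and then specialize to $k=n-2$, whereas the paper carries out that special case directly.
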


\begin{proof}

The fact that the first statement implies that $F_{n,n-1}(\lambda_1^{\pm 1},...,\lambda_{n-1}^{\pm 1})=0$ is an immediate consequence of Equation (\ref{F_k,n equation}) defining $F_{n,n-1}.$ We now focus on showing the first statement.

Rename the variables of $\Lambda_{n-1}$ in the following manner:

\begin{equation*}x_i=\begin{cases}
\lambda_i & i\leq n-1\\
\lambda_{i-n+1}^{-1} & n \leq i \leq 2n-2.
\end{cases}
\end{equation*}

Let $I=\{1,2,\dots,2n-2\}$ and let $J=\{j_1,j_2, \dots, j_k\} \subset I$ be any subset of some cardinality $k$. Denote by $x_J$ the monomial given by $x_J=x_{j_1}x_{j_2}\cdots x_{j_k}.$ 

By definition of the elementary symmetric polynomial $E_{n-2,n-1},$ we have that

\begin{equation*}
E_{n-2,n-1}(\Lambda_{n-1})=\sum_{J \subset I, \vert J \vert=n-2} x_J.
\end{equation*}

Since for each $J \subset I$ with $\vert J \vert =n-2$ we have $\vert I \setminus J \vert =n,$ we see that $E_{n,n-1}$ can be written

\begin{equation*}
E_{n,n-1}(\Lambda_{n-1})=\sum_{J \subset I, \vert J \vert =n-2} x_{I \setminus J}.
\end{equation*}

Since $x_1 x_2 \cdots x_{2n-2}=1,$ we have that $x_{I\setminus J}=(x_J)^{-1}$ for each $J \subset I.$ Since the operation of taking inverses of variables is a bijection on $\Lambda_{n-1}$, we deduce that

\begin{equation*}
\sum_{J \subset I, \vert J \vert =n-2} (x_J)^{-1}=\sum_{J \subset I, \vert J \vert =n-2} x_J.
\end{equation*}

Thus,

\begin{align*}
E_{n,n-1}(\Lambda_{n-1}) &=\sum_{J \subset I, \vert J \vert =n-2} x_{I \setminus J}\\
&=\sum_{J \subset I, \vert J \vert =n-2} (x_J)^{-1}\\
&=\sum_{J \subset I, \vert J \vert =n-2} x_J\\
&=E_{n-2,n-1}(\Lambda_{n-1}),
\end{align*}
as claimed.
\end{proof}

For the following, for $l \geq 1$ let $F_{k,l}:=F_{k,l}(\lambda_1^{\pm 1}, \dots, \lambda_l^{\pm 1})$. We prove the following key lemma, which establishes recursive relations among the $F_{i,j}$ in equations which will later be shown to have diagrammatic analogues in our key Theorem \ref{MainRelations}.

\begin{lemma}\label{evalue relations}
For all $n \geq 1$ we have

\begin{align*}
F_{1,n}&=\lambda_n+\lambda_n^{-1}+F_{1,n-1},\\
F_{k,n}&=(\lambda_n+\lambda_n^{-1})F_{k-1,n-1}+F_{k-2,n-1}+F_{k,n-1},\text{ for } 2 \leq k \leq n-1,\\
F_{n,n}&=(\lambda_n+\lambda_n^{-1})F_{n-1,n-1}+F_{n-2,n-1}.
\end{align*}
\end{lemma}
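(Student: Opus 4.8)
The plan is to reduce everything to elementary symmetric polynomials and use the basic splitting identity for adding one new pair of variables $\lambda_n^{\pm 1}$ to the set $\Lambda_{n-1}$. Write $\Lambda = \Lambda_{n-1} \cup \{\lambda_n,\lambda_n^{-1}\}$ and recall the standard generating-function factorization: if $E_{k,m}(Y)$ denotes the $k$-th elementary symmetric polynomial in a set $Y$, then adjoining two variables $a,b$ gives
\[
E_{k,n}=E_{k,n-1}+(a+b)E_{k-1,n-1}+ab\,E_{k-2,n-1},
\]
where on the right $E_{j,n-1}=E_{j,n-1}(\Lambda_{n-1})$. With $a=\lambda_n$, $b=\lambda_n^{-1}$ we have $ab=1$, so this becomes
\[
E_{k,n}=E_{k,n-1}+(\lambda_n+\lambda_n^{-1})E_{k-1,n-1}+E_{k-2,n-1}.
\]

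Next I would substitute the definition $F_{k,n}=E_{k,n}-E_{k-2,n}$ (Equation~\eqref{F_k,n equation}) and apply the boxed identity to both $E_{k,n}$ and $E_{k-2,n}$:
\[
F_{k,n}=\bigl(E_{k,n-1}-E_{k-2,n-1}\bigr)+(\lambda_n+\lambda_n^{-1})\bigl(E_{k-1,n-1}-E_{k-3,n-1}\bigr)+\bigl(E_{k-2,n-1}-E_{k-4,n-1}\bigr),
\]
and then read off $F_{k,n-1}=E_{k,n-1}-E_{k-2,n-1}$, $F_{k-1,n-1}=E_{k-1,n-1}-E_{k-3,n-1}$, $F_{k-2,n-1}=E_{k-2,n-1}-E_{k-4,n-1}$. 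This immediately yields
\[
F_{k,n}=F_{k,n-1}+(\lambda_n+\lambda_n^{-1})F_{k-1,n-1}+F_{k-2,n-1},
\]
which is exactly the general recursion. The three cases in the statement are then just this single identity specialized with the right boundary conventions: for $k=1$ we use $F_{0,n-1}=1$ and $F_{-1,n-1}=0$ to get $F_{1,n}=\lambda_n+\lambda_n^{-1}+F_{1,n-1}$; for $2\le k\le n-1$ it is the identity verbatim; and for $k=n$ we additionally invoke Lemma~\ref{fn,n-1}, which gives $F_{n,n-1}=0$, so the $F_{k,n-1}$ term drops out and we obtain $F_{n,n}=(\lambda_n+\lambda_n^{-1})F_{n-1,n-1}+F_{n-2,n-1}$.

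I do not expect a serious obstacle here; the only things to be careful about are bookkeeping with the conventions $F_{i,m}=0$ for $i<0$ and $F_{0,m}=1$ (so that the $k=1$ case comes out with the constant $\lambda_n+\lambda_n^{-1}$ rather than an extra $F_{-1}$-type term), and making sure that the appeal to Lemma~\ref{fn,n-1} is legitimate, i.e. that $F_{n,n-1}$ is genuinely the polynomial in the $2(n-1)$-variable set $\Lambda_{n-1}$ and hence vanishes identically. One could alternatively prove the general recursion directly from Definition~\ref{F_k,n} combined with Equation~\eqref{E in terms of F}, but routing through the elementary symmetric splitting identity is cleaner and makes the role of the constraint $\prod_{\lambda\in\Lambda}\lambda=1$ (via $\lambda_n\cdot\lambda_n^{-1}=1$) transparent. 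Finally, if desired, one can give a quick sanity check via the representation-theoretic reading: the recursion is the character-level shadow of the $Sp_{2n}\downarrow Sp_{2n-2}$ branching of $V_{\varpi_k}$, but no such input is needed for the proof.
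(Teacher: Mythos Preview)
Your proof is correct and follows essentially the same approach as the paper: both derive the elementary symmetric splitting identity $E_{k,n}=E_{k,n-1}+(\lambda_n+\lambda_n^{-1})E_{k-1,n-1}+E_{k-2,n-1}$ (the paper via a direct case split on whether a monomial contains $\lambda_n$, $\lambda_n^{-1}$, both, or neither; you via the generating-function factorization), subtract the $k$ and $k-2$ instances to obtain the general $F$-recursion, and then handle the boundary cases $k=1$ and $k=n$ using the conventions $F_{0,n-1}=1$, $F_{-1,n-1}=0$ and Lemma~\ref{fn,n-1} respectively.
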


\begin{proof}
By definition, for each $k,l$ we have $F_{k,l}=E_{k,l}-E_{k-2,l}.$ By separating the terms of $E_{k,n}$ into those containing exactly one of $\lambda_n$ or $\lambda_n^{-1},$ those containing both of $\lambda_n$ and $\lambda_n^{-1}$, or those containing neither $\lambda_n$ nor $\lambda_n^{-1},$ we obtain the following identity

\begin{equation*}
E_{k,n}=(\lambda_n+\lambda_n^{-1})E_{k-1,n-1}+E_{k-2,n-1}+E_{k,n-1}.
\end{equation*}

Similarly
\begin{equation*}
E_{k-2,n}=(\lambda_n+\lambda_n^{-1})E_{k-3,n-1}+E_{k-4,n-1}+E_{k-2,n-1}.
\end{equation*}

We then have that

\begin{align*}
F_{k,n}&=E_{k,n}-E_{k-2,n}\\
&=(\lambda_n+\lambda_n^{-1})(E_{k-1,n-1}-E_{k-3,n-1})+E_{k-2,n-1}-E_{k-4,n-1}+E_{k,n-1}-E_{k-2,n-1}\\
&=(\lambda_n+\lambda_n^{-1})F_{k-1,n-1}+F_{k-2,n-1}+F_{k,n-1}.
\end{align*}

This proves the claim for $2 \leq k \leq n-1.$ Now, for the special case of $k=1,$ we observe that $F_{0,n-1}=1$ and $F_{-1,n-1}=0$ by convention and we obtain the identity as claimed. Lastly, for the special case of $k=n$ we have that $F_{n,n-1}=0$ by Lemma \ref{fn,n-1}, and we obtain the claimed identity.
\end{proof}

\section{Annuli and transparent elements}\label{transparent section}

\subsection{Graphical calculations in the annulus}

Consider the annulus $\mathbb{A}=S^1\times [0,1]$ and a fixed point $p \in S^1$. Let $p_0=(p,0)$ and $p_1=(p,1)$ be two marked points on $\partial \mathbb{A}.$ We call $(\mathbb{A},\{p_0,p_1\})$ the marked annulus. We soon consider webs with boundary equal to $\{p_0,p_1\}.$

\begin{definition}
Define $\mathcal{A}$ to be the $\mathcal{R}\text{-algebra}$ spanned by closed $C_n$ webs in $\mathbb{A}$. Let $s_1:S^1 \times [0,1] \rightarrow S^1 \times [1/2,1]$ be given by $s_1(x,t)=(x,(t+1)/2)$ and let $s_2: S^1 \times [0,1] \rightarrow S^1 \times [0,1/2]$ be given by $s_2(x,t)=(x,t/2).$ If $W_1,W_2 \in \mathcal{A}$ are two webs, we define their product, $W_1W_2$ to be the web obtained by taking $s_1(W_1) \cup s_2(W_2)$. This product agrees with that of $\mathcal{S}_q^{C_n}(\mathbb{A})$.
\end{definition}

\begin{definition}
Define $\mathcal{A}_{1,1}$ to be the skein module $\mathcal{S}_q^{C_n}(\mathbb{A},\{p_0,p_1\})$ of the marked annulus spanned by webs with a label $1$ endpoint at $p_0$ on the interior boundary of the annulus and one label $1$ endpoint at $p_1$ on the exterior boundary of the annulus. The algebra structure for $\mathcal{A}_{1,1}$ is given by concatenating annuli outward, as described for $\mathcal{A}.$ Note that this product is not the same as a skein algebra product in this case.
\end{definition}

\begin{definition}\label{star homomorphisms}
We define an algebra homomorphism denoted by $(-)^{\star}: \mathcal{A} \rightarrow \mathcal{A}_{1,1}$ such that if $D \in \mathcal{A}$ is a diagram, the element $D^{\star} \in \mathcal{A}_{1,1}$ is the diagram of $D$ placed over the identity strand $\text{id} \in \mathcal{A}_{1,1}.$ It can be checked diagrammatically that this is a well-defined algebra homomorphism.

Similarly, we define $(-)_\star: \mathcal{A} \rightarrow \mathcal{A}_{1,1}$ to be the algebra homomorphism such that if $D \in \mathcal{A}$ is a diagram, the element $D_{\star} \in \mathcal{A}_{1,1}$ is the diagram of $D$ placed below the identity strand $\text{id} \in \mathcal{A}_{1,1}.$
\end{definition}

\def\KLoop{
\begin{tric}
   \draw [thick,black,fill=yellow!30] (0,0) circle (3cm);
   \draw [thick,black,fill=white] (0,0) circle (0.8cm);
   \draw (0,0) circle (2cm);

   \draw (2,0)node[right,black,scale=0.7]{$k$}; 
\end{tric}
}

\def\KOverLoop{
\begin{tric}
   \draw [thick,black,fill=yellow!30] (0,0) circle (3cm);
   \draw [thick,black,fill=white] (0,0) circle (0.8cm);
   \draw (0,0) circle (2cm);
   \draw (0,0.8)--(0,1.75) (0,2.25)--(0,3); 

   \draw (0,1.2)node[left,black,scale=0.7]{$1$}
         (2,0)node[right,black,scale=0.7]{$k$}; 
\end{tric}
}

\def\KUnderLoop{
\begin{tric}
   \draw [thick,black,fill=yellow!30] (0,0) circle (3cm);
   \draw [thick,black,fill=white] (0,0) circle (0.8cm);
   \draw (100:2cm) arc (100:440:2cm);
   \draw (0,0.8)--(0,3); 

   \draw (0,1.2)node[left,black,scale=0.7]{$1$}
         (2,0)node[right,black,scale=0.7]{$k$}; 
\end{tric}
}

For example, consider $l_k=\KLoop  \in \mathcal{A} $, then $(l_k)^*=\KOverLoop \in \mathcal{A}_{1,1}$ and $(l_k)_*=\KUnderLoop \in \mathcal{A}_{1,1}$.

\def\VertexDef{
\begin{tric}
   \draw [thick,black,fill=yellow!30] (0,0) circle (3cm);
   \draw [thick,black,fill=white] (0,0) circle (0.7cm);
   \draw (0,0.7)--(0,3);
   \draw (0,0) circle (1.85cm);
   \draw[fill=green,draw=darkgreen] (0,1.85)  circle (0.35cm); 

   \draw (-0.1,2.6)node[right,black,scale=0.7]{$1$}
 (-0.1,1.1)node[right,black,scale=0.7]{$1$}
  (0,1.85)node[black,scale=0.7]{$k$};
\end{tric}
}

\def\VertexDefL{
\begin{tric}
   \draw [thick,black,fill=yellow!30] (0,0) circle (3cm);
   \draw [thick,black,fill=white] (0,0) circle (0.55cm);
   \draw (0,0.55)--(0,3);
   \draw (0,0) circle (1.775cm);
   \draw[fill=green,draw=darkgreen] (0,1.775)  circle (0.63cm); 

   \draw (-0.1,2.7)node[right,black,scale=0.7]{$1$}
 (-0.1,0.8)node[right,black,scale=0.7]{$1$}
  (0,1.775)node[black,scale=0.7]{$k-1$};
\end{tric}
}

\def\OneVertexDef{
\begin{tric}
   \draw [thick,black,fill=yellow!30] (0,0) circle (3cm);
   \draw [thick,black,fill=white] (0,0) circle (0.7cm);
   \draw (0,0.7)--(0,3);
   \draw (0,0) circle (1.85cm);
   \draw[fill=green,draw=darkgreen] (0,1.85)  circle (0.35cm); 

   \draw (-0.1,2.6)node[right,black,scale=0.7]{$1$}
 (-0.1,1.1)node[right,black,scale=0.7]{$1$}
  (0,1.85)node[black,scale=0.7]{$1$};
\end{tric}
}

\def\LowVertexDef{
\begin{tric}
   \draw [thick,black,fill=yellow!30] (0,0) circle (3cm);
   \draw [thick,black,fill=white] (0,0) circle (0.55cm);
   \draw (0,0.55)--(0,3);
   \draw (0,0) circle (1.775cm);
   \draw[fill=green,draw=darkgreen] (0,1.775)  circle (0.63cm); 

   \draw (-0.1,2.7)node[right,black,scale=0.7]{$1$}
 (-0.1,0.8)node[right,black,scale=0.7]{$1$}
  (0,1.775)node[black,scale=0.7]{$k-2$};
\end{tric}
}

\def\NLowVertexDef{
\begin{tric}
   \draw [thick,black,fill=yellow!30] (0,0) circle (3cm);
   \draw [thick,black,fill=white] (0,0) circle (0.55cm);
   \draw (0,0.55)--(0,3);
   \draw (0,0) circle (1.775cm);
   \draw[fill=green,draw=darkgreen] (0,1.775)  circle (0.63cm); 

   \draw (-0.1,2.7)node[right,black,scale=0.7]{$1$}
 (-0.1,0.8)node[right,black,scale=0.7]{$1$}
  (0,1.775)node[black,scale=0.7]{$n-2$};
\end{tric}
}

\def\VertexDefa{
\begin{tric}
   \draw [thick,black,fill=yellow!30] (0,0) circle (3cm);
   \draw (-0.5,2)--(0.7,1.5) 
         (0.7,1.5)--(0,0.8) 
         (0.28,1)node[right,black,scale=0.7]{$1$}
         (-0.5,2)--(0,3) node[left,midway,black,scale=0.7]{$1$};
   \draw (-0.2,2.05)node[right,black,scale=0.7]{$k+1$};  
   
   \draw (-0.5,2)..controls(-2.5,2)and(-3,-2)..(0,-2)
         (0.7,1.5)..controls(2.5,1.5)and(2.5,-2)..(0,-2);
   \draw(1.95,0)node[right,black,scale=0.7]{$k$}; 
    
   \draw [thick,black,fill=white] (0,0) circle (0.8cm);
\end{tric}
}

\def\VertexDefaIso{
\begin{tric}
   \draw [thick,black,fill=yellow!30] (0,0) circle (3.5cm);
   \draw (-0.5,2)--(0.7,1.5) 
         (0.7,1.5)--(0,0.8) 
         (0.28,1)node[right,black,scale=0.7]{$1$};
   \draw (-0.2,2.05)node[right,black,scale=0.7]{$k$};  
   
   \draw (-0.5,2)..controls(-2.5,2)and(-3,-2)..(0,-2)
         (0.7,1.5)..controls(2.5,1.5)and(2.5,-2)..(0,-2);
   \draw(0,-1.95)node[above,black,scale=0.7]{$k-1$}; 
    
   \draw [thick,black,fill=white] (0,0) circle (0.8cm);

   \draw (-0.5,2)..controls(0,3)and(-3,3)..(-3,0);
   \draw (-3,0)..controls(-3,-4)and(3,-4)..(3,0);
   \draw (3,0)..controls(3,2)and(0,2)..(0,3.5) node[above,midway,black,scale=0.7]{$1$}; 
\end{tric}
}

\def\VertexDefaBr{
\begin{tric}
   \draw [thick,black,fill=yellow!30] (0,0) circle (3cm);
   \draw (-0.5,2)--(0.7,1.5) 
         (0.7,1.5)--(0,0.8) 
         (0.28,1)node[right,black,scale=0.7]{$1$}
         (-0.5,2)--(0,3) node[left,midway,black,scale=0.7]{$1$};
   \draw (-0.2,2.05)node[right,black,scale=0.7]{$k-1$};  
   
   \draw (-0.5,2)..controls(-2.5,2)and(-3,-2)..(0,-2)
         (0.7,1.5)..controls(2.5,1.5)and(2.5,-2)..(0,-2);
   \draw(1.95,0)node[right,black,scale=0.7]{$k$}; 
    
   \draw [thick,black,fill=white] (0,0) circle (0.8cm);
\end{tric}
}

\def\NVertexDefaBr{
\begin{tric}
   \draw [thick,black,fill=yellow!30] (0,0) circle (3cm);
   \draw (-0.5,2)--(0.7,1.5) 
         (0.7,1.5)--(0,0.8) 
         (0.28,1)node[right,black,scale=0.7]{$1$}
         (-0.5,2)--(0,3) node[left,midway,black,scale=0.7]{$1$};
   \draw (-0.2,2.05)node[right,black,scale=0.7]{$n-1$};  
   
   \draw (-0.5,2)..controls(-2.5,2)and(-3,-2)..(0,-2)
         (0.7,1.5)..controls(2.5,1.5)and(2.5,-2)..(0,-2);
   \draw(1.95,0)node[right,black,scale=0.7]{$n$}; 
    
   \draw [thick,black,fill=white] (0,0) circle (0.8cm);
\end{tric}
}

\def\VertexDefaH{
\begin{tric}
   \draw [thick,black,fill=yellow!30] (0,0) circle (3cm);
   \draw (-0.5,2)--(0.7,1.5) 
         (0.7,1.5)--(0,0.8) 
         (0.28,1)node[right,black,scale=0.7]{$1$}
         (-0.5,2)--(0,3) node[left,midway,black,scale=0.7]{$1$};
   \draw (-0.2,2.05)node[right,black,scale=0.7]{$k+2$};  
   
   \draw (-0.5,2)..controls(-2.5,2)and(-3,-2)..(0,-2)
         (0.7,1.5)..controls(2.5,1.5)and(2.5,-2)..(0,-2);
   \draw(0,-1.95)node[below,black,scale=0.7]{$k+1$}; 
    
   \draw [thick,black,fill=white] (0,0) circle (0.8cm);
\end{tric}
}

\def\VertexDefaHL{
\begin{tric}
   \draw [thick,black,fill=yellow!30] (0,0) circle (3cm);
   \draw (-0.5,2)--(0.7,1.5) 
         (0.7,1.5)--(0,0.8) 
         (0.28,1)node[right,black,scale=0.7]{$1$}
         (-0.5,2)--(0,3) node[left,midway,black,scale=0.7]{$1$};
   \draw (-0.2,2.05)node[right,black,scale=0.7]{$k$};  
   
   \draw (-0.5,2)..controls(-2.5,2)and(-3,-2)..(0,-2)
         (0.7,1.5)..controls(2.5,1.5)and(2.5,-2)..(0,-2);
   \draw(0,-1.95)node[below,black,scale=0.7]{$k+1$}; 
    
   \draw [thick,black,fill=white] (0,0) circle (0.8cm);
\end{tric}
}

\def\OneVertexDefa{
\begin{tric}
   \draw [thick,black,fill=yellow!30] (0,0) circle (3cm);
   \draw (-0.5,2)--(0.7,1.5) 
         (0.7,1.5)--(0,0.8) 
         (0.28,1)node[right,black,scale=0.7]{$1$}
         (-0.5,2)--(0,3) node[left,midway,black,scale=0.7]{$1$};
   \draw (-0.2,2.05)node[right,black,scale=0.7]{$2$};  
   
   \draw (-0.5,2)..controls(-2.5,2)and(-3,-2)..(0,-2)
         (0.7,1.5)..controls(2.5,1.5)and(2.5,-2)..(0,-2);
   \draw(1.95,0)node[right,black,scale=0.7]{$1$}; 
    
   \draw [thick,black,fill=white] (0,0) circle (0.8cm);
\end{tric}
}

\def\VertexDefb{
\begin{tric}
   \draw [thick,black,fill=yellow!30] (0,0) circle (3cm);
   \draw [thick,black,fill=white] (0,0) circle (0.55cm);
   \draw (0,0) circle (1.7cm);
   \draw (0,0.55)--(0,1.7)
          (0.1,0.8) node[left,black,scale=0.7]{$1$};
   
   \draw (0,1.9)..controls(0,3.5)and(2.3,2)..(2.3,0);
   \draw (2.3,0)..controls(2.3,-3)and(-2.3,-3)..(-2.3,0);
   \draw (-2.3,0)..controls(-2.3,2)and(-0.5,2)..(0,3) node[above,midway,black,scale=0.7]{$1$}; 

   \draw[fill=green,draw=darkgreen] (0,1.7)  circle (0.63cm);
 \draw (0,1.7)node[black,scale=0.7]{$k-1$};
\end{tric}
}

\def\VertexDefbH{
\begin{tric}
   \draw [thick,black,fill=yellow!30] (0,0) circle (3cm);
   \draw [thick,black,fill=white] (0,0) circle (0.65cm);
   \draw (0,0) circle (1.7cm);
   \draw (0,0.65)--(0,1.7)
        (-0.1,1)node[right,black,scale=0.7]{$1$};
   
   \draw (0,1.7)..controls(0,3)and(2.3,2)..(2.3,0);
   \draw (2.3,0)..controls(2.3,-3)and(-2.3,-3)..(-2.3,0);
   \draw (-2.3,0)..controls(-2.3,2)and(0,2)..(0,3) node[above,midway,black,scale=0.7]{$1$}; 

    \draw[fill=green,draw=darkgreen] (0,1.7)  circle (0.35cm);
    \draw(0,1.7) node[black,scale=0.7]{$k$};
\end{tric}
}

\def\OneVertexDefb{
\begin{tric}
   \draw [thick,black,fill=yellow!30] (0,0) circle (3cm);
   \draw [thick,black,fill=white] (0,0) circle (0.8cm);
   \draw (0,0.8)--(0,1.7)
          (0.1,1.5) node[left,black,scale=0.7]{$1$};
   
   \draw (0,1.7)..controls(0,3)and(2.3,2)..(2.3,0);
   \draw (2.3,0)..controls(2.3,-3)and(-2.3,-3)..(-2.3,0);
   \draw (-2.3,0)..controls(-2.3,2)and(0,2)..(0,3) ;
\end{tric}
}

\def\NVertexDefb{
\begin{tric}
   \draw [thick,black,fill=yellow!30] (0,0) circle (3cm);
   \draw [thick,black,fill=white] (0,0) circle (0.55cm);
   \draw (0,0) circle (1.7cm);
   \draw (0,0.55)--(0,1.7)
          (0.1,0.8) node[left,black,scale=0.7]{$1$};
   
   \draw (0,1.9)..controls(0,3.5)and(2.3,2)..(2.3,0);
   \draw (2.3,0)..controls(2.3,-3)and(-2.3,-3)..(-2.3,0);
   \draw (-2.3,0)..controls(-2.3,2)and(-0.5,2)..(0,3) node[above,midway,black,scale=0.7]{$1$}; 

   \draw[fill=green,draw=darkgreen] (0,1.7)  circle (0.63cm);
 \draw (0,1.7)node[black,scale=0.7]{$n-1$};
\end{tric}
}

\def\RVertexDefa{
\begin{tric}
   \draw [thick,black,fill=yellow!30] (0,0) circle (3cm);
   \draw (0.5,2)--(-0.7,1.5) 
         (-0.7,1.5)--(0,0.8) 
         (-0.28,1)node[left,black,scale=0.7]{$1$}
         (0.5,2)--(0,3) node[right,midway,black,scale=0.7]{$1$};
   \draw (0.2,2.05)node[left,black,scale=0.7]{$k+1$};  
   
   \draw (0.5,2)..controls(2.5,2)and(3,-2)..(0,-2)
         (-0.7,1.5)..controls(-2.5,1.5)and(-2.5,-2)..(0,-2);
   \draw(2.1,0)node[right,black,scale=0.7]{$k$}; 
    
   \draw [thick,black,fill=white] (0,0) circle (0.8cm);
\end{tric}
}

\def\RVertexDefaIso{
\begin{tric}
   \draw [thick,black,fill=yellow!30] (0,0) circle (3.5cm);
   \draw (0.5,2)--(-0.7,1.5) 
         (-0.7,1.5)--(0,0.8) 
         (-0.28,1)node[left,black,scale=0.7]{$1$};
         
   \draw (0.2,2.05)node[left,black,scale=0.7]{$k$};  
   
   \draw (0.5,2)..controls(2.5,2)and(3,-2)..(0,-2)
         (-0.7,1.5)..controls(-2.5,1.5)and(-2.5,-2)..(0,-2);
   \draw(0,-1.95)node[above,black,scale=0.7]{$k-1$}; 
    
   \draw [thick,black,fill=white] (0,0) circle (0.8cm);
   
   \draw (0.5,2)..controls(0,3)and(3,3)..(3,0);
   \draw (3,0)..controls(3,-4)and(-3,-4)..(-3,0);
   \draw (-3,0)..controls(-3,2)and(0,2)..(0,3.5) node[above,midway,black,scale=0.7]{$1$}; 
\end{tric}
}

\def\RVertexDefaBr{
\begin{tric}
   \draw [thick,black,fill=yellow!30] (0,0) circle (3cm);
   \draw (0.5,2)--(-0.7,1.5) 
         (-0.7,1.5)--(0,0.8) 
         (-0.28,1)node[left,black,scale=0.7]{$1$}
         (0.5,2)--(0,3) node[right,midway,black,scale=0.7]{$1$};
   \draw (0.2,2.05)node[left,black,scale=0.7]{$k-1$};  
   
   \draw (0.5,2)..controls(2.5,2)and(3,-2)..(0,-2)
         (-0.7,1.5)..controls(-2.5,1.5)and(-2.5,-2)..(0,-2);
   \draw(2.1,0)node[right,black,scale=0.7]{$k$}; 
    
   \draw [thick,black,fill=white] (0,0) circle (0.8cm);
\end{tric}
}

\def\NRVertexDefaBr{
\begin{tric}
   \draw [thick,black,fill=yellow!30] (0,0) circle (3cm);
   \draw (0.5,2)--(-0.7,1.5) 
         (-0.7,1.5)--(0,0.8) 
         (-0.28,1)node[left,black,scale=0.7]{$1$}
         (0.5,2)--(0,3) node[right,midway,black,scale=0.7]{$1$};
   \draw (0.2,2.05)node[left,black,scale=0.7]{$n-1$};  
   
   \draw (0.5,2)..controls(2.5,2)and(3,-2)..(0,-2)
         (-0.7,1.5)..controls(-2.5,1.5)and(-2.5,-2)..(0,-2);
   \draw(2.1,0)node[right,black,scale=0.7]{$n$}; 
    
   \draw [thick,black,fill=white] (0,0) circle (0.8cm);
\end{tric}
}

\def\RVertexDefaH{
\begin{tric}
   \draw [thick,black,fill=yellow!30] (0,0) circle (3cm);
   \draw (0.5,2)--(-0.7,1.5) 
         (-0.7,1.5)--(0,0.8) 
         (-0.28,1)node[left,black,scale=0.7]{$1$}
         (0.5,2)--(0,3) node[right,midway,black,scale=0.7]{$1$};
   \draw (0.2,2.05)node[left,black,scale=0.7]{$k+2$};  
   
   \draw (0.5,2)..controls(2.5,2)and(3,-2)..(0,-2)
         (-0.7,1.5)..controls(-2.5,1.5)and(-2.5,-2)..(0,-2);
   \draw(0,-2.1)node[below,black,scale=0.7]{$k+1$}; 
    
   \draw [thick,black,fill=white] (0,0) circle (0.8cm);
\end{tric}
}

\def\RVertexDefaHL{
\begin{tric}
   \draw [thick,black,fill=yellow!30] (0,0) circle (3cm);
   \draw (0.5,2)--(-0.7,1.5) 
         (-0.7,1.5)--(0,0.8) 
         (-0.28,1)node[left,black,scale=0.7]{$1$}
         (0.5,2)--(0,3) node[right,midway,black,scale=0.7]{$1$};
   \draw (0.2,2.05)node[left,black,scale=0.7]{$k$};  
   
   \draw (0.5,2)..controls(2.5,2)and(3,-2)..(0,-2)
         (-0.7,1.5)..controls(-2.5,1.5)and(-2.5,-2)..(0,-2);
   \draw(0,-2.1)node[below,black,scale=0.7]{$k+1$}; 
    
   \draw [thick,black,fill=white] (0,0) circle (0.8cm);
\end{tric}
}

\def\RVertexDefb{
\begin{tric}
   \draw [thick,black,fill=yellow!30] (0,0) circle (3cm);
   \draw [thick,black,fill=white] (0,0) circle (0.55cm);
   \draw (0,0) circle (1.7cm);
   \draw (0,0.55)--(0,1.7)
          (0.1,0.8) node[left,black,scale=0.7]{$1$};
   
   \draw (0,1.9)..controls(0,3.5)and(-2.3,2)..(-2.3,0);
   \draw (-2.3,0)..controls(-2.3,-3)and(2.3,-3)..(2.3,0);
   \draw (2.3,0)..controls(2.3,2)and(0.5,2)..(0,3) node[above,midway,black,scale=0.7]{$1$}; 

\draw[fill=green,draw=darkgreen] (0,1.7)  circle (0.63cm);
 \draw (0,1.7)node[black,scale=0.7]{$k-1$};
\end{tric}
}

\def\RVertexDefbH{
\begin{tric}
   \draw [thick,black,fill=yellow!30] (0,0) circle (3cm);
   \draw [thick,black,fill=white] (0,0) circle (0.65cm);
   \draw (0,0) circle (1.7cm);
   \draw (0,0.65)--(0,1.7)
          (0.1,1) node[left,black,scale=0.7]{$1$};
   
   \draw (0,1.7)..controls(0,3)and(-2.3,2)..(-2.3,0);
   \draw (-2.3,0)..controls(-2.3,-3)and(2.3,-3)..(2.3,0);
   \draw (2.3,0)..controls(2.3,2)and(0,2)..(0,3) node[above,midway,black,scale=0.7]{$1$}; 

     \draw[fill=green,draw=darkgreen] (0,1.7)  circle (0.35cm);
    \draw(0,1.7) node[black,scale=0.7]{$k$};
\end{tric}
}

\def\OneRVertexDefb{
\begin{tric}
   \draw [thick,black,fill=yellow!30] (0,0) circle (3cm);
   \draw [thick,black,fill=white] (0,0) circle (0.8cm);
   \draw (0,0.8)--(0,1.7)
          (0.1,1.5) node[left,black,scale=0.7]{$1$};
   
   \draw (0,1.7)..controls(0,3)and(-2.3,2)..(-2.3,0);
   \draw (-2.3,0)..controls(-2.3,-3)and(2.3,-3)..(2.3,0);
   \draw (2.3,0)..controls(2.3,2)and(0,2)..(0,3) ; 
\end{tric}
}

\def\NRVertexDefb{
\begin{tric}
   \draw [thick,black,fill=yellow!30] (0,0) circle (3cm);
   \draw [thick,black,fill=white] (0,0) circle (0.55cm);
   \draw (0,0) circle (1.7cm);
   \draw (0,0.55)--(0,1.7)
          (0.1,0.8) node[left,black,scale=0.7]{$1$};
   
   \draw (0,1.9)..controls(0,3.5)and(-2.3,2)..(-2.3,0);
   \draw (-2.3,0)..controls(-2.3,-3)and(2.3,-3)..(2.3,0);
   \draw (2.3,0)..controls(2.3,2)and(0.5,2)..(0,3) node[above,midway,black,scale=0.7]{$1$}; 

\draw[fill=green,draw=darkgreen] (0,1.7)  circle (0.63cm);
 \draw (0,1.7)node[black,scale=0.7]{$n-1$};
\end{tric}
}

\def\AnnBraid{
\begin{tric}
   \draw [thick,black,fill=yellow!30] (0,0) circle (3cm);
   \draw [thick,black,fill=white] (0,0) circle (0.8cm);
   \draw (0,0) circle (2cm);
   \draw (0,0.8)--(0,1.75) (0,2.25)--(0,3); 

   \draw (0,1.2)node[left,black,scale=0.7]{$1$}
         (2,0)node[right,black,scale=0.7]{$k$}; 
\end{tric}
}

\def\OneAnnBraid{
\begin{tric}
   \draw [thick,black,fill=yellow!30] (0,0) circle (3cm);
   \draw [thick,black,fill=white] (0,0) circle (0.8cm);
   \draw (0,0) circle (2cm);
   \draw (0,0.8)--(0,1.75) (0,2.25)--(0,3); 

   \draw (0,1.2)node[left,black,scale=0.7]{$1$}
         (2,0)node[right,black,scale=0.7]{$1$}; 
\end{tric}
}

\def\NAnnBraid{
\begin{tric}
   \draw [thick,black,fill=yellow!30] (0,0) circle (3cm);
   \draw [thick,black,fill=white] (0,0) circle (0.8cm);
   \draw (0,0) circle (2cm);
   \draw (0,0.8)--(0,1.75) (0,2.25)--(0,3); 

   \draw (0,1.2)node[left,black,scale=0.7]{$1$}
         (2,0)node[right,black,scale=0.7]{$n$}; 
\end{tric}
}

\begin{notation}\label{notation} We collect the following special elements of $\mathcal{A}_{1,1}$ for future reference.
   \begin{equation*} T= \OneVertexDefb, T^{-1} =\OneRVertexDefb, X_k=\VertexDef, \text{and } l_k^*= \AnnBraid. 
   \end{equation*}

By convention, we let $X_0=\text{id} \in \mathcal{A}_{1,1}.$ The definition of $X_k$ for $1 \leq k \leq n-1$ is given next.
\end{notation}

\begin{definition}\label{VertDef}
We define the elements $X_k \in \mathcal{A}_{1,1}$ inductively by the following equations.
\begin{equation*}
 X_1=\ \OneVertexDef= - \OneVertexDefa - \frac{[n-1]}{[n]} \OneVertexDefb. 
\end{equation*}

For $2\le k \le n-1$,
\begin{equation}
 X_k=\ \VertexDef= (-1)^k \VertexDefa - \frac{[n-k]}{[n-k+1]} \VertexDefb . \label{VertDefInd}
\end{equation}

Here, what is drawn as an unlabeled strand in the diagram for $X_k$ is not actually a strand. Rather, it represents a placeholder for a linear combination of webs.
\end{definition}

We next prove a property of these elements $X_k$ that we will use in the proof of Theorem \ref{MainRelations}.

\begin{lemma}\label{4vertexsym}
The elements $X_k$ from Definition \ref{VertDef} are preserved under the mirror image operation on the annulus given by reversing the orientation of $S^1.$ In other words, for $k\ge 1 $, 
      \begin{align}
      X_k= (-1)^k \VertexDefa - \frac{[n-k]}{[n-k+1]} \VertexDefb \notag\\ 
        = (-1)^k \RVertexDefa - \frac{[n-k]}{[n-k+1]} \RVertexDefb.  \label{Reflect}
      \end{align} 
\end{lemma}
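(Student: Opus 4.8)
The plan is an induction on $k \ge 1$ along the inductive definition \eqref{VertDefInd}. Write $\rho$ for the mirror operation of the statement; it acts on annular web diagrams by reflection, interchanging the two orientations of $S^1$ and hence the ``left-handed'' and ``right-handed'' local pictures. The first, formal, step is to observe that applying $\rho$ to \eqref{VertDefInd} already reduces the lemma to the identity \eqref{Reflect}: reflection carries the left-handed diagram $\VertexDefa$ to the right-handed $\RVertexDefa$, and carries $\VertexDefb$ to $\RVertexDefb$ but with its green placeholder $X_{k-1}$ replaced by $\rho(X_{k-1})$, which equals $X_{k-1}$ by the inductive hypothesis (the base of the induction being $X_0=\mathrm{id}=\rho(X_0)$). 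Hence $\rho(X_k)=(-1)^k\,\RVertexDefa-\frac{[n-k]}{[n-k+1]}\,\RVertexDefb$, and since the left side of \eqref{VertDefInd} is $X_k$, the assertion $\rho(X_k)=X_k$ is precisely the equality \eqref{Reflect} of the two linear combinations. So it is \eqref{Reflect} that must be proved.

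The substance of the proof is therefore a skein computation in the annulus establishing \eqref{Reflect}. The left- and right-handed diagrams $\VertexDefa$ and $\RVertexDefa$ are related by dragging the $k$-labeled strand around the core of the annulus, the only obstruction being a local ``$1$ meets $k$'' configuration (with intermediate label $k+1$) near the basepoint; the deduced relation \eqref{deducedskein} (equivalently the defining relation (\ref{defskein}e)) --- whose coefficient $\frac{[n-k]}{[n-k+1]}$ already matches the coefficient in \eqref{VertDefInd} --- is exactly what lets the strand past this obstruction. Applying it rewrites $\VertexDefa$ in terms of $\RVertexDefa$ and correction terms carrying the complementary label $k-1$; these correction terms assemble, after one more use of the inductive hypothesis to identify the two chiralities of the placeholder $X_{k-1}$, into the $\VertexDefb$ and $\RVertexDefb$ diagrams appearing in \eqref{Reflect}, and performing the same rewriting on the $\VertexDefb$ summand of \eqref{VertDefInd} then exhibits the two sides of \eqref{Reflect} as equal. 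The bigon relation (\ref{defskein}c), the associativity relation (\ref{defskein}d), and relation (\ref{defskein}b) are used along the way to bring the local pictures into the form required by \eqref{deducedskein} and to discard the cap--cup type terms that (\ref{defskein}e) produces.

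The hard part will be this last computation: correctly locating the disk inside the annular picture to which \eqref{deducedskein} applies, tracking every strand label faithfully through the reflection, and verifying that all the correction terms --- in particular the transient $(k+1)$-labeled and cap--cup terms --- either vanish in the annulus or recombine, via (\ref{defskein}b)--(\ref{defskein}d) and the recursive form of $X_{k-1}$, with no residue. By contrast, the formal reduction of the first paragraph and the base case $k=1$, where $\VertexDefb$ degenerates into the element $T$ of Notation \ref{notation} and the computation specializes directly, should be straightforward.
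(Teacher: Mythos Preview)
Your proposal is essentially the paper's own argument: induction on $k$, with the base case $k=1$ reducing to a direct application of \eqref{deducedskein}, and the inductive step using \eqref{deducedskein} at the top of the annular diagram together with the hypothesis $\rho(X_{k-1})=X_{k-1}$ to pass between the two chiralities. The paper organizes the inductive step into three intermediate identities --- one applying \eqref{deducedskein} to the $(k{+}1)$-labeled I-diagram, and two expanding $TX_{k-1}$ and $T^{-1}X_{k-1}$ using, respectively, the mirrored and the defining expression for $X_{k-1}$ --- but this is exactly the computation you sketch. One small correction: since \eqref{deducedskein} (rather than (\ref{defskein}e)) is the relation actually applied, no cap--cup terms arise and relations (\ref{defskein}b)--(\ref{defskein}d) are not needed; the computation is cleaner than your last paragraph anticipates.
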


\begin{proof}
  We will proceed by induction. When $k=1$, Equation \eqref{Reflect} follows immediately from Equation \eqref{deducedskein}. Denote the second expression in \eqref{Reflect} by $X_k'.$ For our inductive hypothesis, suppose that $X_k=X_k'.$ We will show that $X_{k+1}=X_{k+1}'.$ We first record a few identities. First, by Equation \eqref{deducedskein} with $k$ replaced by $k+1$ we have,  
  \begin{align}\label{ref1}
    \VertexDefaH
      =\RVertexDefaH +  \frac{[n-k-1]}{[n-k]}\RVertexDefaHL -  \frac{[n-k-1]}{[n-k]}\VertexDefaHL.
  \end{align}  
  
Secondly, by expanding $TX_k$ by using the expression for $X_k'$ in Equation (\ref{Reflect}) and isotopies we obtain
  \begin{align}\label{ref2}
   \VertexDefbH = (-1)^k \VertexDefaHL - \frac{[n-k]}{[n-k+1]} \VertexDefL.
  \end{align}

Thirdly, by expanding $T^{-1}X_k$ by using the definition of $X_k$ from Equation (\ref{VertDefInd}) and applying isotopies we obtain
 \begin{align}\label{ref3}
     \RVertexDefbH = (-1)^k \RVertexDefaHL - \frac{[n-k]}{[n-k+1]} \VertexDefL.
 \end{align} 

Now, we will compute that $X_{k+1}=X_{k+1}'.$ We begin with the first expression of \eqref{Reflect} with $k$ replaced by $k+1$ and apply both \eqref{ref1} and \eqref{ref2} to give
\begin{align*}
& X_{k+1} =  (-1)^{k+1}\VertexDefaH -   \frac{[n-k-1]}{[n-k]} \VertexDefbH\\
  =  & (-1)^{k+1}\left( \RVertexDefaH +  \frac{[n-k-1]}{[n-k]}\RVertexDefaHL \right) + \frac{[n-k-1]}{[n-k]}  \frac{[n-k]}{[n-k+1]} \VertexDefL\\
\end{align*}

We then apply \eqref{ref3} to obtain $X_{k+1}'$ and hence have proven the lemma by induction.

\end{proof}

Next, we use our elements $X_k$ to formulate the following theorem, which we will use to relate elements of $\mathcal{A}_{1,1}$ with the polynomials $F_{k,n}.$

\begin{thm} \label{MainRelations} For $2 \le k \le n-1$,
\begin{equation*}
    \AnnBraid=\VertexDef +q \ \VertexDefb + q^{-1}\ \RVertexDefb+ \LowVertexDef.
\end{equation*}    
 For the special cases of $k\in \{1,n\}$ we have
\begin{equation*}
    \OneAnnBraid=\OneVertexDef + q \ \OneVertexDefb + q^{-1} \ \OneRVertexDefb , 
\end{equation*}   

\begin{equation*}
    \NAnnBraid=q\ \NVertexDefb+q^{-1}\ \NRVertexDefb+\NLowVertexDef.
\end{equation*}   

Using the notation \ref{notation}, the equations can be written as
\begin{align}
l_1^*&=qT+q^{-1}T^{-1}+X_1, \label{InitialGTHM} \\
l_k^*&=qTX_{k-1}+q^{-1}T^{-1}X_{k-1}+X_{k}+X_{k-2}, \text{ for } 2 \le k \le n-1, \label{MidGTHM} \\
l_n^*&=qTX_{n-1}+q^{-1}T^{-1}X_{n-1}+X_{n-2}\label{EndGTHM}
\end{align}

\end{thm}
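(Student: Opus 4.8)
The plan is to prove the three stated identities simultaneously by induction on $k$, using the braiding formula of Theorem~\ref{Braiding theorem} to expand the annular crossing $l_k^*$ and then identifying each resulting term with one of the elements $T X_{k-1}$, $T^{-1} X_{k-1}$, $X_k$, $X_{k-2}$. Concretely, $l_k^*$ is obtained by placing the $1$-labeled strand of the annular closure of the $k$-labeled loop over the identity strand, so applying \eqref{1kbrading} (with the sign $(-1)^k$ moved to the other side) expresses $l_k^*$ as a linear combination of three annular diagrams: the ``$k{+}1$ fusion'' term $\Braida$-type diagram, the $\Braidb$-type diagram with a $k{-}1$-labeled rung, and the $\Braidc$-type diagram with a $k{-}1$-labeled rung, with coefficients $1$, $-q^{-1}$, $-q^{n-k+1}/[n-k+1]$ respectively, all read inside the annulus over the identity strand.

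The first main step is to recognize the ``$k{+}1$ fusion'' annular diagram. Fusing the outgoing $1$ with the $k$-loop along a $k{+}1$-labeled edge and then resolving, one finds this diagram is precisely $\VertexDef$-plus-corrections: more precisely I expect it to unfold, via the associativity relation \eqref{defskein}d and the square-switch relation \eqref{defskein}e (or its annular consequence \eqref{deducedskein}), into $X_k$ together with a multiple of the ``$T$ over a vertex'' diagram $\VertexDefbH$ and its mirror. Here the definition \eqref{VertDefInd} of $X_k$ as $(-1)^k$ times the ``$(k{+}1)$-over-identity'' annular web minus $\tfrac{[n-k]}{[n-k+1]}$ times $TX_{k-1}$ is exactly engineered so that this identification works, and Lemma~\ref{4vertexsym} (the mirror symmetry of $X_k$) is what lets me match the two sides of the annulus symmetrically. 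The second step is routine once the first is done: the $\Braidb$-type term with the $k{-}1$-rung is, after an isotopy sliding it around the annulus, visibly $q^{-1}T^{-1}X_{k-1}$ up to a scalar, and the $\Braidc$-type term with the $k{-}1$-rung becomes (using \eqref{defskein}c to evaluate a bigon and \eqref{deducedskein} to rebalance) a combination of $qTX_{k-1}$ and $X_{k-2}$. Collecting the coefficients and using the quantum-integer identity relating $[n-k]/[n-k+1]$, $q^{n-k+1}/[n-k+1]$ and $q^{\pm1}$ should produce exactly $qTX_{k-1}+q^{-1}T^{-1}X_{k-1}+X_k+X_{k-2}$, which is \eqref{MidGTHM}.

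For the boundary cases, $k=1$ uses Definition~\ref{VertDef}'s base case for $X_1$ in place of the generic step (here the $\Braidb$ and $\Braidc$ terms have a $0$-labeled rung, so one strand is erased, collapsing the $X_{k-2}=X_{-1}$ contribution and turning the $X_{k-1}=X_0=\mathrm{id}$ terms into the pure $qT$ and $q^{-1}T^{-1}$ summands); and $k=n$ uses the vanishing $F_{n,n-1}=0$ at the diagrammatic level, i.e. the fact that the ``$(n{+}1)$-labeled'' fusion edge is zero because labels exceeding $n$ kill the web, so the $X_n$ term disappears and only $qTX_{n-1}+q^{-1}T^{-1}X_{n-1}+X_{n-2}$ survives, giving \eqref{EndGTHM}. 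The main obstacle I anticipate is the bookkeeping in the first step: carefully resolving the $(k{+}1)$-fusion annular diagram into the $X_k$ normal form requires several applications of \eqref{defskein}d, \eqref{defskein}e and \eqref{deducedskein} in the annulus, keeping track of which side of the identity strand each sub-web lies on, and verifying that no ``wrap-around'' terms (webs that genuinely use the annular topology beyond a single $T^{\pm1}$) are generated — this is exactly the kind of place where the $X_k$ for $k\ge 2$, being defined only in the annulus, require care that has no closed-disk analogue.
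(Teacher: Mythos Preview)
Your approach is essentially the paper's: expand $l_k^*$ via the braiding formula \eqref{1kbrading}, then rewrite each of the three resulting annular diagrams using Definition~\ref{VertDef} for $X_k$ and $X_{k-1}$ together with the mirror-symmetry Lemma~\ref{4vertexsym}, and finally collect coefficients via a quantum-integer identity. The boundary cases $k=1$ and $k=n$ are handled exactly as you describe.

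Your attribution of which diagram produces which term is slightly off, however, and this is worth correcting before you attempt the computation. The $(k{+}1)$-fusion annular diagram rewrites \emph{directly} from the defining equation \eqref{VertDefInd} (no \eqref{defskein}d, \eqref{defskein}e, or \eqref{deducedskein} needed at this step) as $(-1)^k\bigl(X_k + \tfrac{[n-k]}{[n-k+1]}\, TX_{k-1}\bigr)$: a single $TX_{k-1}$ correction, not both $T$ and $T^{-1}$. The two $(k{-}1)$-rung annular diagrams, after an isotopy that rotates a vertex around the core, match the right-hand side of \eqref{VertDefInd} (respectively of its mirror \eqref{Reflect}) written for $X_{k-1}$; solving each for the annular web yields $(-1)^{k-1}\bigl(T^{\mp 1}X_{k-1} + \tfrac{[n-k+1]}{[n-k+2]}\,X_{k-2}\bigr)$. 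In particular \emph{both} $(k{-}1)$-rung terms contribute to $X_{k-2}$, not just the $\Braidc$-type one, and neither requires a further application of \eqref{deducedskein}. The coefficient collection then reduces to the identities $\tfrac{[n-k]}{[n-k+1]}+\tfrac{q^{n-k+1}}{[n-k+1]}=q$ and $\bigl(\tfrac{q^{n-k+1}}{[n-k+1]}+q^{-1}\bigr)\tfrac{[n-k+1]}{[n-k+2]}=1$, so the ``wrap-around'' worry you raise does not materialize.
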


\begin{proof}
Equation \eqref{InitialGTHM} follows from Equation \eqref{1kbrading} when $k=1$.  \\
For the proof of Equation \eqref{MidGTHM}, first by Equation \eqref{1kbrading},
\begin{align*}
(-1)^k \AnnBraid =  \VertexDefa - \frac{q^{n-k+1}}{[n-k+1]} \VertexDefaBr -q^{-1} \RVertexDefaBr .
\end{align*}

Then by using the definition of $X_k$ from \eqref{VertDefInd},

\begin{align*}
 \VertexDefa = (-1)^k \left(\VertexDef +\frac{[n-k]}{[n-k+1]} \VertexDefb \right).
 \end{align*}

 By the definition of $X_{k-1}$ from Equation \eqref{VertDefInd}, 
 \begin{align*}
     \RVertexDefaBr= \VertexDefaIso = 
(-1)^{k-1} \left(\RVertexDefb + \frac{[n-k+1]}{[n-k+2]} \LowVertexDef \right).
 \end{align*}

By the expression for $X_{k-1}$ from Equation \eqref{Reflect}, 
\begin{align*}
    \VertexDefaBr=\RVertexDefaIso= (-1)^{k-1} \left(\VertexDefb + \frac{[n-k+1]}{[n-k+2]} \LowVertexDef \right).
\end{align*}

So \begin{align*}
\AnnBraid =  \quad & \VertexDef 
+\left(  \frac{[n-k]}{[n-k+1]} + \frac{q^{n-k+1}}{[n-k+1]}  \right) \VertexDefb \\ 
\quad & + q^{-1}\RVertexDefb  + \left( \frac{q^{n-k+1}}{[n-k+1]}+q^{-1} \right) \frac{[n-k+1]}{[n-k+2]}  \LowVertexDef, \\
\end{align*}

which is the desired expression since

\begin{align*}
\  \frac{[n-k]}{[n-k+1]}  + & \frac{q^{n-k+1}}{[n-k+1]} =q \ \text{ and } \ 
\left( \frac{q^{n-k+1}}{[n-k+1]}+q^{-1} \right) \frac{[n-k+1]}{[n-k+2]}=1. 
\end{align*}

For the proof of Equation \eqref{EndGTHM}, first recall that $\NBraida=0$ by convention. So by Equation \eqref{1kbrading} when $k=n$, 
\begin{align*}
(-1)^n \NAnnBraid =  - q \NVertexDefaBr -q^{-1} \NRVertexDefaBr .
\end{align*}

Then by Equations \eqref{VertDefInd} and \eqref{Reflect} for $X_{n-1}$, 
 \begin{align*}
    (-1)^n \NAnnBraid =
&(-1)^{n} q \left(\NRVertexDefb + \frac{1}{[2]} \NLowVertexDef \right) \\
&+(-1)^{n} q^{-1} \left(\NVertexDefb + \frac{1}{[2]} \NLowVertexDef \right).
\end{align*}

Thes gives us Equation \eqref{EndGTHM} as desired since $[2]= q+q^{-1}$.

\end{proof}

\subsection{The map $\varphi^*$}
We will construct an algebra map from a subring of $\mathbb{Z}[\Lambda]$ to $\mathcal{A}_{1,1}.$ The first step is to show certain elements of $\mathcal{A}_{1,1}$ commute with each other. Recall the elements from Notation \ref{notation}.

\begin{lemma}\label{commute}
The elements $T,T^{-1},$ $l_j^*$ for $1 \leq j \leq n,$ and $X_k$ for $1 \leq k \leq n-1$ are in the center of the algebra $\mathcal{A}_{1,1}$. 
\end{lemma}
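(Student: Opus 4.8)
The plan is to prove centrality of each of the listed elements in $\mathcal{A}_{1,1}$ by showing that each commutes with an arbitrary element, using the fact that $\mathcal{A}_{1,1}$ is spanned by webs in the marked annulus and the product is concatenation of annuli outward. A key structural observation is that $\mathcal{A}_{1,1}$ is generated as an algebra by the elements $T^{\pm 1}$ together with the subalgebra $\mathcal{A}^\star$ (the image of $(-)^\star$), or equivalently that any web in the marked annulus can be isotoped so that its $1$-labeled ``through-strand'' meeting the two boundary components is a single radial arc, with the rest of the web a closed web sitting to one side; this lets me reduce centrality questions to webs of the special form $D^\star$ and $D_\star$ for $D \in \mathcal{A}$ and to the basic generators $T, T^{-1}, l_j^\star, X_k$.

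\medskip

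\noindent\textbf{Step 1: Reduce to commuting with $D^\star$.} First I would show that for any closed web $D \in \mathcal{A}$ and any of the listed elements $Y$, the element $Y$ commutes with $D^\star$. Since the through-strand of $Y$ (whether $T$, $T^{-1}$, $l_j^\star$, or $X_k$) is a $1$-labeled radial arc possibly decorated near the outer boundary, and $D^\star$ is $D$ stacked outside the identity through-strand, one can slide $D$ radially past the decoration of $Y$. Concretely, $D^\star Y$ has $D$ outermost; isotoping $D$ inward past the annular feature of $Y$ (pulling the closed web $D$ through the annulus, which is allowed since $D$ has no boundary and the through-strand is transparent to such an isotopy, or more carefully using that $\mathcal A$ is already commutative so $D$ can be moved to the innermost position and then back out the other side of $Y$) yields $YD^\star$. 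Equivalently, note $D^\star = D_\star$ would not hold in general, but $D^\star Y = Y D^\star$ does because $Y$'s through-strand together with its decoration occupies an annular neighborhood that $D$ can be isotoped across.

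\medskip

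\noindent\textbf{Step 2: Handle the generic web.} Next I would argue that a general element of $\mathcal{A}_{1,1}$ is a linear combination of webs $W$ each of which, after isotopy, factors as $W = D_1^\star \cdot G \cdot D_2^\star$ where $G$ is one of the ``atomic'' pieces ($T$, $T^{-1}$, or a single vertex-type decoration), or more simply that $\mathcal{A}_{1,1}$ is generated by $\{T, T^{-1}\} \cup \{D^\star : D \in \mathcal{A}\}$. Granting this, by Step 1 it suffices to check that each listed element $Y$ commutes with $T$ and with $T^{-1}$. For $Y = T^{\pm 1}$ this is trivial. For $Y = l_j^\star$, $X_k$: commuting with $T$ and $T^{-1}$ amounts to sliding the single $1$-labeled through-strand of $T$ radially past the annular feature defining $Y$ — for $l_j^\star$ this is the over/under-crossing resolution of a $j$-loop, and for $X_k$ this is (inductively, via Definition \ref{VertDef}) built from vertices and $T$-crossings — so the relevant moves are instances of the Reidemeister-type relations \eqref{ReideTwo}, \eqref{ReideThree}, \eqref{ReidemPar} and the naturality of the braiding, applied in the annulus.

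\medskip

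\noindent\textbf{Step 3: Assemble.} Combining, every listed element commutes with all generators of $\mathcal{A}_{1,1}$, hence lies in the center. I expect the main obstacle to be Step 2 — precisely justifying that $\mathcal{A}_{1,1}$ is generated by $\{T^{\pm 1}\} \cup \mathcal{A}^\star$, i.e. that an arbitrary web in the marked annulus can be disentangled into radial through-strand pieces and closed-web pieces, and carefully tracking the isotopies/skein moves that let one push a closed web or a through-strand across the annular decorations of $T$ and $X_k$. The case of $X_k$ is the subtlest since $X_k$ is only defined as a linear combination of webs (Definition \ref{VertDef}); one must verify the slide move term-by-term, using Lemma \ref{4vertexsym} (mirror symmetry of $X_k$) to handle sliding from either side symmetrically.
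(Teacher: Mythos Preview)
Your instinct that $T$, $T^{-1}$, and $l_j^\star$ are central by an isotopy argument is correct, and the paper says exactly this in one line: each of these elements can be slid radially past any other web in the thickened marked annulus.

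The gap is in your treatment of $X_k$. You try to prove centrality of $X_k$ by first asserting that $\mathcal{A}_{1,1}$ is generated by $\{T^{\pm1}\}\cup\{D^\star:D\in\mathcal{A}\}$ and then checking $X_k$ commutes with those generators term-by-term from Definition~\ref{VertDef}. You yourself flag the generating-set claim as the main obstacle, and rightly so: a generic web in the marked annulus has its through-strand meeting internal trivalent vertices, and it is not at all obvious how to factor such a web as a product of wraps $T^{\pm1}$ and closed-webs-over-identity without already knowing a great deal about $\mathcal{A}_{1,1}$. Even granting that claim, verifying that every diagram in the recursive expansion of $X_k$ slides past $T$ and past an arbitrary $D^\star$ is exactly the kind of casework the lemma is designed to avoid.

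The paper bypasses all of this with a short algebraic induction using Theorem~\ref{MainRelations}. From \eqref{InitialGTHM} one has $X_1=l_1^\star-qT-q^{-1}T^{-1}$, and the right-hand side is central by the isotopy observation, so $X_1$ is central. Inductively, assuming $X_{k-1}$ and $X_{k-2}$ are already known to be central, \eqref{MidGTHM} gives
\[
X_k \;=\; l_k^\star - qTX_{k-1} - q^{-1}T^{-1}X_{k-1} - X_{k-2},
\]
and every term on the right is central. No description of a generating set for $\mathcal{A}_{1,1}$ is ever needed, and the centrality of $X_k$ falls out in two lines.
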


\begin{proof}
We will prove this inductively. First, by equation (\ref{InitialGTHM}) we have

\begin{equation*}
l_1^*=qT+q^{-1}T^{-1}+X_1.
\end{equation*}

Since the elements $l_1^*, T, T^{-1}$ are seen to be in the center of $\mathcal{A}_{1,1}$ by considering simple isotopies, we deduce that $X_1$ is in the center as well. Since $X_0=\text{id}$ by convention, it is in the center as well.

For the inductive step, assume that $2 \leq k \leq n-1$ and that we have shown $X_l$ is in the center for $l< k.$ Then from equation (\ref{MidGTHM}) we have

\begin{equation*}
l_k^*=qTX_{k-1}+q^{-1}T^{-1}X_{k-1}+X_{k}+X_{k-2}.
\end{equation*}

By consideration of isotopy, $l_k^*$ is in the center of $\mathcal{A}_{1,1}.$ Our inductive assumption implies that $X_{k-1}$ and $X_{k-2}$ are in the center as well. We then deduce that $X_k$ is in the center for all $k \leq n-1.$

\end{proof}

\begin{thm}\label{phi star}

There exists a unique algebra homomorphism $\varphi^*: \mathbb{Z}[\lambda_n^{\pm 1},F_{1,n-1}, \dots, F_{n-1,n-1}] \rightarrow \mathcal{A}_{1,1}$ defined on generators in the following way

\begin{align*}
   \varphi^* ( F_{k,n-1} ) = \VertexDef =X_k, \quad 1\le k \le n-1, \\ 
      \varphi^*( \lambda_n  ) = q \ \OneVertexDefb =q T, \quad
   \varphi^*( \lambda_n^{-1} ) = q^{-1} \ \OneRVertexDefb =q^{-1} T^{-1} .
\end{align*}

Moreover, $\varphi^*$ satisfies the following identities

\begin{equation*}
\varphi^* ( F_{k,n}) = \AnnBraid =l_k^*, \quad 1\le k \le n.
\end{equation*}

Similarly, there exists a map $\varphi_*: \mathbb{Z}[\lambda_n^{\pm 1},F_{1,n-1}, \dots, F_{n-1,n-1}] \rightarrow \mathcal{A}_{1,1}$ obtained from the definition of $\varphi^*$ on generators by switching $q$ to $q^{-1}.$

\end{thm}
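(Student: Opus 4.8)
The plan is to realize $\varphi^*$ via the universal property of the source ring, with Lemma \ref{commute} supplying well-definedness, and then to read off the identities $\varphi^*(F_{k,n}) = l_k^*$ by applying $\varphi^*$ to the recursions of Lemma \ref{evalue relations} and matching them against Theorem \ref{MainRelations}.

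First I would pin down the structure of the source ring. Each $F_{i,n-1}$ is a polynomial in $\lambda_1^{\pm 1}, \dots, \lambda_{n-1}^{\pm 1}$ alone, so $\mathbb{Z}[\lambda_n^{\pm 1}, F_{1,n-1}, \dots, F_{n-1,n-1}] = \mathbb{Z}[\lambda_n^{\pm 1}] \otimes_{\mathbb{Z}} \mathbb{Z}[F_{1,n-1}, \dots, F_{n-1,n-1}]$, and the algebraic independence of the $F_{i,n-1}$ (which follows from that of the $E_{i,n-1}$, as recalled in the proof of Proposition \ref{Property of Type C polynomial}) makes the second tensor factor a genuine polynomial ring. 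Thus the source is the free commutative unital $\mathbb{Z}$-algebra on the $n+1$ symbols $\lambda_n, \lambda_n^{-1}, F_{1,n-1}, \dots, F_{n-1,n-1}$ modulo the single relation $\lambda_n \lambda_n^{-1} = 1$. To produce a homomorphism from it to $\mathcal{A}_{1,1}$ taking the prescribed values on generators, it therefore suffices to check that the proposed images $qT,\ q^{-1}T^{-1},\ X_1, \dots, X_{n-1}$ pairwise commute and that $(qT)(q^{-1}T^{-1}) = \text{id}$. Pairwise commutativity is precisely Lemma \ref{commute} (all of $T, T^{-1}, X_1, \dots, X_{n-1}$ are central), and $(qT)(q^{-1}T^{-1}) = TT^{-1} = \text{id}$ is a one-line isotopy: gluing the two annular $1$-labeled strands that each wind once around $\mathbb{A}$ in opposite senses produces the identity strand. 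This gives existence; uniqueness is immediate because the listed elements generate the source as a $\mathbb{Z}$-algebra.

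For the asserted identities I would feed the three recursions of Lemma \ref{evalue relations} through $\varphi^*$, using that $\varphi^*$ is a ring map, that $\varphi^*(\lambda_n + \lambda_n^{-1}) = qT + q^{-1}T^{-1}$, that $\varphi^*(F_{j,n-1}) = X_j$ for $1 \le j \le n-1$, the convention $\varphi^*(F_{0,n-1}) = \varphi^*(1) = \text{id} = X_0$, and the vanishing $F_{n,n-1} = 0$ from Lemma \ref{fn,n-1} (so that term simply disappears). Because the right-hand sides in Lemma \ref{evalue relations} involve only $\lambda_n^{\pm 1}$ and the $F_{j,n-1}$, no induction is needed: for $k = 1$ one gets $\varphi^*(F_{1,n}) = qT + q^{-1}T^{-1} + X_1$; for $2 \le k \le n-1$ one gets $\varphi^*(F_{k,n}) = (qT + q^{-1}T^{-1})X_{k-1} + X_{k-2} + X_k$; and for $k = n$ one gets $\varphi^*(F_{n,n}) = (qT + q^{-1}T^{-1})X_{n-1} + X_{n-2}$. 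These are exactly the right-hand sides of \eqref{InitialGTHM}, \eqref{MidGTHM}, and \eqref{EndGTHM}, so $\varphi^*(F_{k,n}) = l_k^*$ in all cases.

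Finally, $\varphi_*$ is handled by the same argument run with the homomorphism $(-)_\star$ replacing $(-)^\star$, the inverse braiding replacing the braiding, and $q$ replaced by $q^{-1}$ throughout; the analogues of Lemma \ref{commute} and Theorem \ref{MainRelations} needed there follow from the $q \leftrightarrow q^{-1}$ symmetry of the braiding noted just after Theorem \ref{Braiding theorem}, so nothing new arises. The one point in the whole argument that genuinely needs attention is the well-definedness of $\varphi^*$: since $\mathcal{A}_{1,1}$ is noncommutative, one truly must invoke the centrality statement of Lemma \ref{commute} — which is why that lemma is established first — together with the algebraic-independence input that identifies the source with a localized polynomial ring; everything else is bookkeeping that transports the polynomial identities of Section \ref{transparent section} into $\mathcal{A}_{1,1}$.
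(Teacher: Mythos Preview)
Your proof is correct and follows essentially the same route as the paper's: well-definedness via algebraic independence of the generators together with Lemma \ref{commute}, then verification of $\varphi^*(F_{k,n})=l_k^*$ by matching the recursions of Lemma \ref{evalue relations} against equations \eqref{InitialGTHM}--\eqref{EndGTHM} of Theorem \ref{MainRelations}. You spell out the universal-property and $\varphi_*$ details a bit more carefully than the paper does, but the argument is the same.
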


\begin{proof}

Note that the proposed definition of $\varphi^*$ satisfies $\varphi^*(\lambda_n^{-1})=(\varphi^*(\lambda_n))^{-1}.$ We deduce that $\varphi^*$ is a well-defined algebra homomorphism because the elements $\lambda_n^{\pm 1}, F_{1,n-1}, \dots, F_{n-1,n-1}$ are algebraically independent and  Lemma \ref{commute} implies their images commute.

We then check that $\varphi^*(F_{k,n})=l_k^*$ for each $1 \leq k \leq n.$

For $k=1,$ we use equation (\ref{InitialGTHM}), and then the fact that $\varphi^*(F_{1,n-1})=X_1$ by definition, followed by Lemma \ref{evalue relations} to see that

\begin{align*}
l_1^*&=qT+q^{-1}T^{-1}+X_1\\
&=\varphi^*(\lambda_n+\lambda_n^{-1}+F_{1,n-1})\\
&=\varphi^*(F_{1,n}),
\end{align*}
as claimed.

For $2 \leq k \leq n-1,$ we use equation (\ref{MidGTHM}), the definition of $\varphi^*,$ and Lemma \ref{evalue relations} to see that

\begin{align*}
l_k^*&=qTX_{k-1}+q^{-1}T^{-1}X_{k-1}+X_k+X_{k-2}\\
&= \varphi^*(\lambda_nF_{k-1,n-1} +\lambda_n^{-1}F_{k-1,n-1}+F_{k-2,n-1}+F_{k,n-1})\\
&=\varphi^*(F_{k,n}),
\end{align*} as claimed.

Finally, for $k=n$, we have that by equation (\ref{EndGTHM}), the definition of $\varphi^*,$ and Lemma \ref{evalue relations}

\begin{align*}
l_n^*&=qTX_{n-1}+q^{-1}T^{-1}X_{n-1}+X_{n-2}\\
&=\varphi^*(\lambda_nF_{n-1,n-1}+\lambda_n^{-1}F_{n-1,n-1}+F_{n-2,n-1})\\
&=\varphi^*(F_{n,n}),
\end{align*} as claimed.

\end{proof}

\def\kEdge{
\begin{tric}
\draw (0,-2)--(0,2) node[pos=0.65,left,scale=0.7,black]{$\klabel$};
\end{tric}
}

\def\SumTwistd{
\begin{tric}
\draw (-1,1)--(-1,2) 
       node[midway,left,scale=0.7,black]{$\klabel$}
       (-1,2)..controls(-2,2)and(-2,2.5)..(-2,3) 
        node[midway,left,scale=0.7,black]{$\klabel-1$}
       (-1,2)..controls(0,3)..(0,7);
\filldraw[black] (-2.25,3.25) circle (1pt) 
                 (-2.5,3.5) circle (1pt) (-2.75,3.75) circle (1pt); 
\draw 
(-3,4)..controls(-3.5,4)and(-3.5,4.5)..(-3.5,5)node[midway,left,scale=0.7,black]{$3$}
(-3.5,5)..controls(-4.5,5)and(-4.5,5.5)..(-4.5,6)node[midway,left,scale=0.7,black]{$2$}
(-4.5,6)..controls(-5.5,6)and(-5.5,6.5)..(-5.5,7)
(-3.5,5)..controls(-2.5,5.5)..(-2.5,7) 
(-4.5,6)..controls(-3.5,6)and(-3.5,6.5)..(-3.5,7);
\begin{scope}[yshift= 14cm]
\draw  (-1,-1)--(-1,-2) 
       node[midway,left,scale=0.7,black]{$\klabel$}
       (-1,-2)..controls(-2,-2)and(-2,-2.5)..(-2,-3) 
        node[midway,left,scale=0.7,black]{$\klabel-1$}
       (-1,-2)..controls(0,-3)..(0,-7)node[pos=0.3,right,scale=0.7,black]{$1$};
\filldraw[black] (-2.25,-3.25) circle (1pt) 
                 (-2.5,-3.5) circle (1pt) (-2.75,-3.75) circle (1pt)
      (-0.9,-6.5) circle (1pt) (-1.25,-6.5) circle (1pt) (-1.6,-6.5) circle (1pt); 
\draw 
(-3,-4)..controls(-3.5,-4)and(-3.5,-4.5)..(-3.5,-5)node[midway,left,scale=0.7,black]{$3$}
(-3.5,-5)..controls(-4.5,-5)and(-4.5,-5.5)..(-4.5,-6)node[midway,left,scale=0.7,black]{$2$}

(-4.5,-6)..controls(-5.5,-6)and(-5.5,-6.5)..(-5.5,-7)
(-5.4,-6.3)node[above,scale=0.7,black]{$1$}
(-3.5,-5)..controls(-2.5,-5.5)..(-2.5,-7) node[pos=0.35,right,scale=0.7,black]{$1$}
(-4.5,-6)..controls(-3.5,-6)and(-3.5,-6.5)..(-3.5,-7)
(-3.6,-6.3)node[above,scale=0.7,black]{$1$};
\end{scope}

\end{tric}
}

\def\kEdgeOverP{
\begin{tric}
\draw (-1.5,0)--(3.5,0)node[pos=0.75,above,scale=0.7,black]{$\PowerSum$};
\draw [double=darkblue,ultra thick,white,line width=3pt](0,-2)--(0,2) node[pos=0.75,left,scale=0.7,black]{$\klabel$};
\end{tric}
}

\def\kEdgeUnderP{
\begin{tric}
\draw (0,-2)--(0,2) node[pos=0.75,left,scale=0.7,black]{$\klabel$};
\draw [double=darkblue,ultra thick,white,line width=3pt](-1.5,0)--(3.5,0)node[pos=0.75,above,scale=0.7,black]{$\PowerSum$};
\end{tric}
}

\def\SumTwistdOverP{
\begin{tric}
\draw (-6.5,7)--(2.5,7)node[pos=0.9,above,scale=0.7,black]{$\PowerSum$};

\draw [double=darkblue,ultra thick,white,line width=3pt]
       (-1,2)..controls(-2,2)and(-2,2.5)..(-2,3) 
        node[midway,left,scale=0.7,black]{$\klabel-1$};
\filldraw[black] (-2.25,3.25) circle (1pt) 
                 (-2.5,3.5) circle (1pt) (-2.75,3.75) circle (1pt); 
\begin{scope}[yshift= 14cm]
\draw  [double=darkblue,ultra thick,white,line width=3pt](-1,-1)--(-1,-2) 
       node[midway,left,scale=0.7,black]{$\klabel$}
       (-1,-2)..controls(-2,-2)and(-2,-2.5)..(-2,-3) 
        node[midway,left,scale=0.7,black]{$\klabel-1$}
       (-1,-2)..controls(0,-3)..(0,-7)..controls(0,-11)..(-1,-12)
       (-0.05,-3.1)node[above,scale=0.7,black]{$1$};
\filldraw[black] (-2.25,-3.25) circle (1pt) 
                 (-2.5,-3.5) circle (1pt) (-2.75,-3.75) circle (1pt)
      (-0.9,-6.5) circle (1pt) (-1.25,-6.5) circle (1pt) (-1.6,-6.5) circle (1pt); 
      
\draw [double=darkblue,ultra thick,white,line width=3pt]
(-3,-4)..controls(-3.5,-4)and(-3.5,-4.5)..(-3.5,-5)node[midway,left,scale=0.7,black]{$3$}
(-3.5,-5)..controls(-4.5,-5)and(-4.5,-5.5)..(-4.5,-6)node[midway,left,scale=0.7,black]{$2$}

(-4.5,-6)..controls(-5.5,-6)and(-5.5,-6.5)..(-5.5,-7)..controls(-5.5,-7.5)and(-5.5,-8)..(-4.5,-8)
(-5.4,-6.3)node[above,scale=0.7,black]{$1$}
(-3.5,-5)..controls(-2.5,-5.5)..(-2.5,-7)..controls(-2.5,-8.5)..(-3.5,-9)
(-2.85,-5.4)node[right,scale=0.7,black]{$1$}
(-4.5,-6)..controls(-3.5,-6)and(-3.5,-6.5)..(-3.5,-7)..controls(-3.5,-7.5)and(-3.5,-8)..(-4.5,-8)
(-3.6,-6.3)node[above,scale=0.7,black]{$1$};

\draw 
(-3,-10)..controls(-3.5,-10)and(-3.5,-9.5)..(-3.5,-9)node[midway,left,scale=0.7,black]{$3$}
(-3.5,-9)..controls(-4.5,-9)and(-4.5,-8.5)..(-4.5,-8)node[midway,left,scale=0.7,black]{$2$};

\end{scope}

\draw(-1,1)--(-1,2) 
       node[midway,left,scale=0.7,black]{$\klabel$};
\end{tric}
}

\def\SumTwistdUnderP{
\begin{tric}
\draw (-1,1)--(-1,2) 
       node[midway,left,scale=0.7,black]{$\klabel$}
       (-1,2)..controls(-2,2)and(-2,2.5)..(-2,3) 
        node[midway,left,scale=0.7,black]{$\klabel-1$}
       (-1,2)..controls(0,3)..(0,7);
\filldraw[black] (-2.25,3.25) circle (1pt) 
                 (-2.5,3.5) circle (1pt) (-2.75,3.75) circle (1pt); 
\draw 
(-3,4)..controls(-3.5,4)and(-3.5,4.5)..(-3.5,5)node[midway,left,scale=0.7,black]{$3$}
(-3.5,5)..controls(-4.5,5)and(-4.5,5.5)..(-4.5,6)node[midway,left,scale=0.7,black]{$2$}
(-4.5,6)..controls(-5.5,6)and(-5.5,6.5)..(-5.5,7)
(-3.5,5)..controls(-2.5,5.5)..(-2.5,7) 
(-4.5,6)..controls(-3.5,6)and(-3.5,6.5)..(-3.5,7);
\begin{scope}[yshift= 14cm]
\draw  (-1,-1)--(-1,-2) 
       node[midway,left,scale=0.7,black]{$\klabel$}
       (-1,-2)..controls(-2,-2)and(-2,-2.5)..(-2,-3) 
        node[midway,left,scale=0.7,black]{$\klabel-1$}
       (-1,-2)..controls(0,-3)..(0,-7)node[pos=0.3,right,scale=0.7,black]{$1$};
\filldraw[black] (-2.25,-3.25) circle (1pt) 
                 (-2.5,-3.5) circle (1pt) (-2.75,-3.75) circle (1pt)
      (-0.9,-6.5) circle (1pt) (-1.25,-6.5) circle (1pt) (-1.6,-6.5) circle (1pt); 
\draw 
(-3,-4)..controls(-3.5,-4)and(-3.5,-4.5)..(-3.5,-5)node[midway,left,scale=0.7,black]{$3$}
(-3.5,-5)..controls(-4.5,-5)and(-4.5,-5.5)..(-4.5,-6)node[midway,left,scale=0.7,black]{$2$}

(-4.5,-6)..controls(-5.5,-6)and(-5.5,-6.5)..(-5.5,-7)
(-5.4,-6.3)node[above,scale=0.7,black]{$1$}
(-3.5,-5)..controls(-2.5,-5.5)..(-2.5,-7) node[pos=0.35,right,scale=0.7,black]{$1$}
(-4.5,-6)..controls(-3.5,-6)and(-3.5,-6.5)..(-3.5,-7)
(-3.6,-6.3)node[above,scale=0.7,black]{$1$};
\end{scope}

\draw [double=darkblue,ultra thick,white,line width=3pt](-6.5,7)--(2.5,7)node[pos=0.9,above,scale=0.7,black]{$\PowerSum$};

\end{tric}
}

\subsection{Constructing transparent elements}
In this section we prove the main result of this paper, which is that the polynomials $Q_{2n}^{(N,k)}$ can be used to construct transparent elements in $C_n$ skein modules and central elements in $C_n$ skein algebras of surfaces.

\begin{definition}
Given a framed knot $K$ in an oriented $3\text{-manifold } M$ and a polynomial
\begin{equation*}
P=\sum a_{i_1\cdots i_n}f_1^{i_1}\cdots f_n^{i_n} \in \mathcal{R}[f_1,\dots,f_n],    
\end{equation*} 
the result of threading the polynomial $P$ along the knot $K$ is denoted by $K^{[P]} \in \mathcal{S}_q^{C_n}(M)$ and is given by

\begin{equation*}
K^{[P]}=\sum a_{i_1\cdots i_n}K^{[f_1^{i_1}\cdots f_n^{i_n}]},
\end{equation*}
where $K^{[f_1^{i_1}\cdots f_n^{i_n}]}$ is given by taking $i_1+\cdots+i_n$ parallel copies of the knot $K$ in the framing direction and giving $i_k$ of these knots the label $k$, for each $k=1, \dots n.$ The resulting element does not depend on which collection is chosen for each label, since the copies are parallel and can be rearranged using Reidemeister 2 moves.
\end{definition}

\begin{definition}
We say that a polynomial $P \in \mathcal{R}[f_1,\dots,f_n]$ is transparent if the following equality holds for any oriented $3\text{-manifold } M$, for any web $W \in \mathcal{S}_q^{C_n}(M)$ and for any two framed knots $K_1,K_2$ disjoint from $W$ which are isotopic by an isotopy which is allowed to pass through $W$: $[K_1^{[P]} \cup W]=[K_2^{[P]}\cup W] \in \mathcal{S}_q^{C_n}(M).$
\end{definition}

Evidently, if $P$ is transparent, then if $\Sigma$ is a surface and $K$ is a framed knot in $\Sigma \times (0,1),$ the element $K^{[P]}$ will be in the center of $\mathcal{S}_q^{C_n}(\Sigma).$ We aim to show that $Q_{2n}^{(N,k)}$ is transparent when $q$ is an appropriate root of unity. To show that a knot $K^{[Q_{2n}^{(N,k)}]}$ can pass through a web $W$, it suffices to show that the element can pass through any edge of a web $W.$ The following lemma shows further that it suffices to show that $K^{[Q_{2n}^{(N,k)}]}$ can pass through an edge labeled by $1$, which relies on the fact that we have assumed our ground ring contains $[2]^{-1} ,\dots ,[n]^{-1}.$ 

\begin{lemma}\label{transparency1}
  Assume that $K^{[Q_{2n}^{(N,k)}]}$ can pass through an edge carrying the label 1. Then it can pass through an edge carrying any label $k \in \{1, \dots n\}.$
\end{lemma}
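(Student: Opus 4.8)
The plan is to induct on the label $k$. The base case $k=1$ is exactly the hypothesis. For $2 \le k \le n$, assume the statement holds for all labels strictly smaller than $k$. The key maneuver is to resolve a $k$-labeled edge, using the defining relation (\ref{defskein}c), into a bigon whose two strands carry the labels $1$ and $k-1$, and then to push the threaded knot through these two strands one at a time.

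Concretely, I would argue as follows. Let $W$ be a web in an oriented $3$-manifold $M$, let $e$ be an edge of $W$ labeled by $k$, and let $K_1,K_2$ be framed knots disjoint from $W$ which are isotopic by an isotopy whose only interaction with $W$ is to cross the interior of $e$ transversally at one point. Inside a small ball $B$ meeting $W$ in a single $k$-labeled sub-arc of $e$, replace that sub-arc by $\tfrac{1}{[k]}$ times the bigon of relation (\ref{defskein}c), in which the $k$-strand splits off a $1$-strand and a $(k-1)$-strand and then recombines. This replacement is legitimate since $[k]^{-1}\in\mathcal{R}$, it is supported in $B$, and it respects the ribbon structure; calling the new web $W'$ we have $[K_i^{[Q_{2n}^{(N,k)}]}\cup W]=\tfrac{1}{[k]}[K_i^{[Q_{2n}^{(N,k)}]}\cup W']$ in $\mathcal{S}_q^{C_n}(M)$ for $i=1,2$.

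Now I would factor the relevant part of the isotopy from $K_1$ to $K_2$: since the $1$-labeled and $(k-1)$-labeled strands of the bigon are disjoint embedded arcs in $B$ lying away from the two trivalent vertices, the single pass-through of $e$ can be replaced by a pass-through of the $1$-strand followed by a pass-through of the $(k-1)$-strand, at the cost of introducing an intermediate knot $K'$. By the hypothesis of the lemma applied to the $1$-labeled edge of $W'$, the first pass does not change the class in $\mathcal{S}_q^{C_n}(M)$; by the inductive hypothesis applied to the $(k-1)$-labeled edge of $W'$, neither does the second. Hence $[K_1^{[Q_{2n}^{(N,k)}]}\cup W']=[K_2^{[Q_{2n}^{(N,k)}]}\cup W']$, and multiplying by $\tfrac{1}{[k]}$ yields $[K_1^{[Q_{2n}^{(N,k)}]}\cup W]=[K_2^{[Q_{2n}^{(N,k)}]}\cup W]$, completing the induction.

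The one point requiring care is the geometric bookkeeping in the last step: an arbitrary isotopy crossing $W$ should first be subdivided into elementary isotopies each of which crosses a single edge, and then such an elementary crossing of the resolved $k$-edge must be realized as a crossing of the $1$-strand of the bigon followed by a crossing of the $(k-1)$-strand, all inside the ball $B$. Once this is set up, the only algebraic ingredients are relation (\ref{defskein}c) and the invertibility of $[1],\dots,[n]$ in $\mathcal{R}$, both already available.
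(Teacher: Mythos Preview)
Your proof is correct and follows essentially the same approach as the paper: both arguments use relation (\ref{defskein}c) and the invertibility of the quantum integers to replace a $k$-labeled strand by lower-labeled strands, then pass the threaded knot through those. The only cosmetic difference is that the paper fully unrolls the $k$-strand into $k$ parallel $1$-strands in one step (incurring a factor of $\tfrac{1}{[k]!}$), whereas you peel off one $1$-strand at a time and appeal to induction; these are equivalent presentations of the same idea.
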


\begin{proof}
We first repeatedly apply relation (\ref{defskein}c) to the edge labeled by $k$ so as to create parallel edges all labeled by $1$.
  \[ \kEdge= \frac{1}{[\klabel]!}\SumTwistd \]
  We can then pass the element through each edge labeled by $1$ in the following way, where the horizontal strand depicts a piece of the knot threaded by $Q_{2n}^{(N,k)}$.
  \[\kEdgeOverP
  =\frac{1}{[\klabel]!}\SumTwistdOverP
  =\frac{1}{[\klabel]!}\SumTwistdUnderP
  =\kEdgeUnderP\]
\end{proof}

Recall that throughout the paper we assume that our specialization of $q$ satisfies $[2]^{-1},\dots,[n]^{-1} \in \mathcal{R}.$

\begin{thm}\label{main theorem}
When $q^{2N}=1,$ the polynomial $Q_{2n}^{(N,k)}$ is transparent for each $k=1, \dots n.$ Consequently, for a surface $\Sigma$, if $K$ is an element of  $\mathcal{S}_q^{C_n}(\Sigma)$ represented by a framed knot $K$, then for each $k=1,\dots, n$, the element obtained by threading the power sum polynomial $Q_{2n}^{(N,k)}(f_1,...,f_n)$ along $K$ is a central element of the skein algebra.
\end{thm}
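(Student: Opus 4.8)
The plan is to peel off the two reductions already set up in the paper and then to run everything through the homomorphism $\varphi^*$. By the remark preceding Lemma~\ref{transparency1} together with Lemma~\ref{transparency1} itself, it suffices to show that $K^{[\PowerSum]}$ can be pushed through a single edge labelled $1$. Working in a solid-torus neighbourhood of $K$ whose core circle is $K$, the part of $K^{[\PowerSum]}$ there is exactly $\PowerSum(l_1,\dots,l_n)\in\mathcal{A}$ (the $i_j$ parallel copies labelled $j$ become $l_j^{i_j}$), and a $1$-labelled edge $e$ passing through this neighbourhood plays the role of the identity strand $\text{id}\in\mathcal{A}_{1,1}$; pushing $K^{[\PowerSum]}$ from one side of $e$ to the other is the comparison of the two images of $\PowerSum(l_1,\dots,l_n)$ under the algebra homomorphisms $(-)^\star$ and $(-)_\star$ of Definition~\ref{star homomorphisms}. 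Since those maps are algebra homomorphisms, transparency through a $1$-labelled edge is \emph{equivalent} to the identity
\[
\PowerSum(l_1^*,\dots,l_n^*)=\PowerSum(l_{1,*},\dots,l_{n,*})\qquad\text{in }\mathcal{A}_{1,1},
\]
where $l_j^*=(l_j)^\star$ and $l_{j,*}=(l_j)_\star$.

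Next I would turn both sides into values of the maps $\varphi^*$ and $\varphi_*$ of Theorem~\ref{phi star}. That theorem gives $\varphi^*(F_{j,n})=l_j^*$ for $1\le j\le n$, so the left-hand side equals $\varphi^*\bigl(\PowerSum(F_{1,n},\dots,F_{n,n})\bigr)$, and Proposition~\ref{Property of Type C polynomial} identifies its argument with $F_{k,n}^{(N)}$; thus the left-hand side is $\varphi^*(F_{k,n}^{(N)})$. The element $l_{j,*}$ differs from $l_j^*$ only by reversing the single crossing of the $j$-labelled loop with the core strand, which by the braiding formulas \eqref{InitialBraid}--\eqref{klbraiding} is the substitution $q\leftrightarrow q^{-1}$; running the proof of Theorem~\ref{phi star} with this substitution gives $\varphi_*(F_{j,n})=l_{j,*}$, so the right-hand side is $\varphi_*(F_{k,n}^{(N)})$. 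Hence the whole theorem reduces to the algebraic identity
\[
\varphi^*(F_{k,n}^{(N)})=\varphi_*(F_{k,n}^{(N)})\qquad\text{whenever }q^{2N}=1 .
\]

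For this last identity I would locate $F_{k,n}^{(N)}$ in a subring on which $\varphi^*$ and $\varphi_*$ visibly coincide. Iterating the recursions of Lemma~\ref{evalue relations} applied to the $N$-th powers of the variables (and using Lemma~\ref{fn,n-1}, which remains valid after replacing each $\lambda_i$ by $\lambda_i^{N}$, for the top recursion) expresses $F_{k,n}^{(N)}$ as a polynomial with integer coefficients in $\lambda_n^{N}+\lambda_n^{-N}$ and $F_{1,n-1}^{(N)},\dots,F_{n-1,n-1}^{(N)}$; by the rank-$(n-1)$ case of Proposition~\ref{Property of Type C polynomial} each $F_{j,n-1}^{(N)}$ lies in $\mathbb{Z}[F_{1,n-1},\dots,F_{n-1,n-1}]$. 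Therefore $F_{k,n}^{(N)}$ belongs to the subring of $\mathbb{Z}[\lambda_n^{\pm 1},F_{1,n-1},\dots,F_{n-1,n-1}]$ generated by $\lambda_n^{N}+\lambda_n^{-N}$ and $F_{1,n-1},\dots,F_{n-1,n-1}$. On each $F_{j,n-1}$ both maps take the value $X_j$, since every coefficient in the inductive definition of $X_j$ in Definition~\ref{VertDef} is a rational function of $q$ invariant under $q\leftrightarrow q^{-1}$; and
\[
\varphi^*(\lambda_n^{N}+\lambda_n^{-N})=q^{N}T^{N}+q^{-N}T^{-N}=q^{-N}T^{N}+q^{N}T^{-N}=\varphi_*(\lambda_n^{N}+\lambda_n^{-N}),
\]
the middle equality holding because $q^{2N}=1$ forces $q^{-N}=q^{N}$. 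Thus $\varphi^*$ and $\varphi_*$ agree on the whole subring, in particular on $F_{k,n}^{(N)}$, which establishes transparency through a $1$-labelled edge; Lemma~\ref{transparency1} then upgrades this to transparency through an edge of any label, so $\PowerSum$ is transparent. The statement about surfaces is immediate from the definition of transparency: for $M=\Sigma\times(0,1)$ a threaded knot may be isotoped past any web, so $K^{[\PowerSum]}$ commutes with every element of $\mathcal{S}_q^{C_n}(\Sigma)$ and is therefore central.

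I expect the main obstacle to be the bookkeeping in the third step: verifying cleanly that $F_{k,n}^{(N)}$ really lies in the subring on which $\varphi^*$ and $\varphi_*$ coincide. This is the one place where the recursions of Lemma~\ref{evalue relations} must be combined with the rank-$(n-1)$ instance of Proposition~\ref{Property of Type C polynomial}, and one must handle the base of the induction and the top recursion (where $F_{n,n-1}^{(N)}=0$ by Lemma~\ref{fn,n-1}) carefully. A secondary point requiring care is making the topological reduction of the first step --- ``pushing $K^{[\PowerSum]}$ through a $1$-labelled edge'' versus the $\mathcal{A}_{1,1}$-identity --- fully precise. The hypothesis $q^{2N}=1$ enters only through the single equality $q^{N}=q^{-N}$.
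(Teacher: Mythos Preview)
Your proposal is correct and follows essentially the same route as the paper: reduce to a $1$-labelled edge via Lemma~\ref{transparency1}, translate the problem into $\mathcal{A}_{1,1}$ as the identity $\varphi^*(F_{k,n}^{(N)})=\varphi_*(F_{k,n}^{(N)})$, expand $F_{k,n}^{(N)}$ using Lemma~\ref{evalue relations} (applied to the $N$-th-power variables) together with the rank-$(n-1)$ case of Proposition~\ref{Property of Type C polynomial}, and then use that the $X_j$ are $q\leftrightarrow q^{-1}$ invariant while $q^N T^N+q^{-N}T^{-N}$ is symmetric in $q^{\pm N}$ once $q^{2N}=1$. The only cosmetic difference is that the paper writes out both $\varphi^*(F_{k,n}^{(N)})$ and $\varphi_*(F_{k,n}^{(N)})$ explicitly and subtracts, whereas you phrase it as locating $F_{k,n}^{(N)}$ in a subring on which the two maps agree; also note that a single application of Lemma~\ref{evalue relations} (not an iteration) already lands you in that subring.
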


\begin{proof}
By Lemma \ref{transparency1}, it suffices to show that the element can pass through an edge labeled by $1.$ By considering a tubular neighborhood of $K$ which is a thickened annulus, this amounts to showing the following equality in $\mathcal{A}_{1,1}$: $Q_{2n}^{(N,k)}(l_1,\dots, l_n)^*=Q_{2n}^{(N,k)}(l_1,\dots, l_n)_* \in \mathcal{A}_{1,1}$ when $q^{2N}=1.$ We will do this by using the algebra homomorphisms $(-)^*$ and $(-)_*$ from Definition \ref{star homomorphisms} and $\varphi^*,\varphi_*$ from Theorem \ref{phi star}.

We have that

\begin{align*}
Q_{2n}^{(N,k)}(l_1,\dots, l_n)^*&=Q_{2n}^{(N,k)}(l_1^*,\dots,l_n^*)\\
&=Q_{2n}^{(N,k)}(\varphi^*(F_{1,n}),\dots,\varphi^*(F_{n,n}))\\
&=\varphi^*(Q_{2n}^{(N,k)}(F_{1,n},\dots,F_{n,n}))\\
&=\varphi^*(F_{k,n}^{(N)}).
\end{align*}

Similarly, we have that

\begin{equation*}
Q_{2n}^{(N,k)}(l_1,\dots, l_n)_*=\varphi_*(F_{k,n}^{(N)}).
\end{equation*}

Recall from Lemma \ref{evalue relations} we have for any $1 \leq k \leq n$,

\begin{equation*}
F_{k,n}=(\lambda_n+\lambda_n^{-1})F_{k-1,n-1}+F_{k-2,n-1}+F_{k,n-1}.
\end{equation*}

Recall that $F_{k,n}^{(N)}$ is obtained from $F_{k,n}$ by replacing each variable $\lambda_{i}^{\pm 1}$ with $\lambda_{i}^{\pm N}.$ Consequently, we use Proposition \ref{Property of Type C polynomial} to show that

\begin{align*}
F_{k,n}^{(N)}=&(\lambda_n^N+\lambda_n^{-N})F_{k-1,n-1}^{(N)}+F_{k-2,n-1}^{(N)}+F_{k,n-1}^{(N)}\\
=&(\lambda_n^N+\lambda_n^{-N})Q_{2(n-1)}^{(N,k-1)}(F_{1,n-1},\dots,F_{n-1,n-1})+Q_{2(n-1)}^{(N,k-2)}(F_{1,n-1},\dots,F_{n-1,n-1})\\
&+Q_{2(n-1)}^{(N,k)}(F_{1,n-1},\dots,F_{n-1,n-1}).
\end{align*}

We then compute that

\begin{align*}
\varphi^*(F_{k,n}^{(N)})=&(\varphi^*(\lambda_n^N)+\varphi^*(\lambda_n^{-N}))Q_{2(n-1)}^{(N,k-1)}(\varphi^*(F_{1,n-1}),\dots,\varphi^*(F_{n-1,n-1}))+Q_{2(n-1)}^{(N,k-2)}(\varphi^*(F_{1,n-1}),\dots,\varphi^*(F_{n-1,n-1}))\\
&+Q_{2(n-1)}^{(N,k)}(\varphi^*(F_{1,n-1}),\dots,\varphi^*(F_{n-1,n-1}))\\
=&(q^NT^N-q^{-N}T^{-N})Q_{2(n-1)}^{(N,k-1)}(X_1,\dots,X_{n-1}) +Q_{2(n-1)}^{(N,k-2)}(X_1,\dots,X_{n-1})\\
&+Q_{2(n-1)}^{(N,k)}(X_1,\dots,X_{n-1}).
\end{align*}

We then use the fact that the definition of $\varphi_*$ is given by replacing $q$ by $q^{-1}$ in the definition of $\varphi^*$, along with the fact that the elements $X_i$ from Definition \ref{VertDef} are unchanged by the replacement $q\mapsto q^{-1}$ to compute that

\begin{align*}
\varphi_*(F_{k,n}^{(N)})=&(q^{-N}T^N-q^{N}T^{-N})Q_{2(n-1)}^{(N,k-1)}(X_1,\dots,X_{n-1}) +Q_{2(n-1)}^{(N,k-2)}(X_1,\dots,X_{n-1})\\
&+Q_{2(n-1)}^{(N,k)}(X_1,\dots,X_{n-1}).
\end{align*}

Finally, we deduce that

\begin{equation*}
\varphi^*(F_{k,n}^{(N)})-\varphi_*(F_{k,n}^{(N)})=[(q^{N}-q^{-N})T^N+(q^{-N}-q^{N})T^{-N}]Q_{2(n-1)}^{(N,k-1)}(X_{1},\dots,X_{n-1}).
\end{equation*}

Therefore, whenever $q$ satisfies $q^{2N}=1$ we have that $\varphi^*(F_{k,n}^{(N)})=\varphi_*(F_{k,n}^{(N)}),$ as required.

\end{proof}

\appendix

\section{Proof of the braiding formula} \label{Braidproof}

We explain the details of the proof for Equation \eqref{1kbrading} in this section. We need the following definition and proposition for the graphical calculations in the proof.

\def\FlowVert
{\begin{tric}
\draw [scale=2] (0,0)--(90:1) (0,0)--(210:1) (0,0)--(330:1);
\draw [scale=2] 
     (90:1)node[black,anchor=south,scale=0.7]{$k+l$}
      (210:1)node[black,anchor=north,scale=0.7]{$k$}
      (330:1)node[black,anchor=north,scale=0.7]{$l$}; 
\end{tric}
}

\def\FlowVertA
{\begin{tric} [scale=.3,anchorbase]
	\draw[very thick] (-1,0) to node[below=-1pt,scale=0.7]{$k{-}1$} (1,0);
	\draw[very thick] (-1,0) to node[left,scale=0.7]{$1$} (0,1.732);
	\draw[very thick] (1,0) to node[right,scale=0.7]{$k{+}l{-}1$} (0,1.732);
	\draw[very thick] (0,1.732) to (0,3.232) node[above,scale=0.7]{$k{+}l$};
	\draw[very thick] (-2.3,-.75) node[below,scale=0.7]{$k$} to (-1,0);
	\draw[very thick] (2.3,-.75) node[below,scale=0.7]{$l$} to (1,0);
\end{tric}
}

\def\FlowVertB
{\begin{tric} [scale=.3,anchorbase]
	\draw[very thick] (-1,0) to node[below=-1pt,scale=0.7]{$l{-}1$} (1,0);
	\draw[very thick] (-1,0) to node[left,scale=0.7]{$k{+}l{-}1$} (0,1.732);
	\draw[very thick] (1,0) to node[right,scale=0.7]{$1$} (0,1.732);
	\draw[very thick] (0,1.732) to (0,3.232) node[above,scale=0.7]{$k{+}l$};
	\draw[very thick] (-2.3,-.75) node[below,scale=0.7]{$k$} to (-1,0);
	\draw[very thick] (2.3,-.75) node[below,scale=0.7]{$l$} to (1,0);
\end{tric}
}

\begin{definition}[{\cite[Definition 3.2]{BERT21}}]
    The flow vertex in ${\text{Hom}}_{\textbf{Web}_q(\mathfrak{sp}_{2n})}(k \otimes l , k+l)$
    is defined inductively by the following relation.  
    \begin{equation}
        \FlowVert:=\frac{1}{[k]}\FlowVertA=\frac{1}{[l]}\FlowVertB. \label{FlowVert}
    \end{equation} 
\end{definition}

\begin{remark}
    Notice that when $k=1$ or $l=1$, Equation $\eqref{FlowVert}$ follows from $(\ref{defskein}\text{d})$ and $(\ref{defskein}\text{c})$. 
\end{remark}

\def\Flowassocia
{\begin{tric}
\draw[scale=0.8] (0,0)..controls(0,0.5)and(0.2,0.7)..(0.5,1)
      (1,0)..controls(1,0.5)and(0.8,0.7)..(0.5,1)
      (0.5,1)..controls(0.5,1.5)and(0.3,1.7)..(0,2) 
              node[right,black,midway, scale=0.7]{$l+m$}
      (-1,0)..controls(-1,1)and(-0.6,1.5)..(0,2)
      (0,2)--(0,3)
      (0,0)node[below,black,scale=0.7]{$l$}
      (1,0)node[below,black,scale=0.7]{$m$}
      (-1,0)node[below,black,scale=0.7]{$k$}
      (0,3)node[above,black,scale=0.7]{$k+l+m$};
\end{tric}
}

\def\Flowassoci
{\begin{tric}
\draw[scale=0.8] (0,0)..controls(0,0.5)and(-0.2,0.7)..(-0.5,1)
      (-1,0)..controls(-1,0.5)and(-0.8,0.7)..(-0.5,1)
      (-0.5,1)..controls(-0.5,1.5)and(-0.3,1.7)..(0,2) 
              node[left,black,midway, scale=0.7]{$k+l$}
      (1,0)..controls(1,1)and(0.6,1.5)..(0,2)
      (0,2)--(0,3)
      (0,0)node[below,black,scale=0.7]{$l$}
      (-1,0)node[below,black,scale=0.7]{$k$}
      (1,0)node[below,black,scale=0.7]{$m$}
      (0,3)node[above,black,scale=0.7]{$k+l+m$};
\end{tric}
}

\begin{proposition}[{\cite[Theorem 3.6]{BERT21}}]
    \begin{align}
        \Flowassoci=\Flowassocia \label{FlowAssoci}\\
        \FlowSkeinbigon = 0 \label{VanishBigon}
    \end{align}
\end{proposition}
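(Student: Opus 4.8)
Both identities are \cite[Theorem 3.6]{BERT21}, so the first option is simply to cite that reference; if a self-contained argument is wanted, the plan is to prove \eqref{VanishBigon} first, since it feeds into \eqref{FlowAssoci}.

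First I would note that each of the two internal vertices of $\FlowSkeinbigon$ has a $1$-labelled leg, so both are elementary vertices (the bottom one bent), and hence $\FlowSkeinbigon$ represents a morphism $k-1 \to k+1$ in $\Web(\spn)$. The quickest proof that it vanishes is via the functor $\Phi$ of \cite[Theorem 1.4]{BERT21}: over $\C(q)$ its image lies in $\Hom_{U_q(\spn)}(V_{\varpi_{k-1}}, V_{\varpi_{k+1}}) = 0$, as these are non-isomorphic simple modules; and since the identity is a consequence of relations \eqref{defskein} in which every scalar that occurs is one of the quantum integers $[1], \dots, [n]$ (all invertible in $\mathcal R$), it descends to an arbitrary ground ring. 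Alternatively one can argue purely diagrammatically by induction on $k$: for $k=1$ the bottom label is $0$ and may be erased, leaving the ``lollipop'' of relation (\ref{defskein}b); for the inductive step one splits a $1$-labelled strand off the $k$-labelled arc using (\ref{defskein}c) and reorganizes via (\ref{defskein}d) and (\ref{defskein}e) until a copy of the $(k-1)$-case of $\FlowSkeinbigon$ is exposed and killed by the inductive hypothesis.

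For \eqref{FlowAssoci} I would induct on $k+l+m$ (both sides being $0$ whenever a sum of labels exceeds $n$). When one of the three labels equals $1$, the corresponding flow vertex in $\Flowassoci$ (and in $\Flowassocia$) is already elementary; unfolding the remaining flow vertices by the defining recursion \eqref{FlowVert} then reduces the claim to repeated use of the elementary associativity (\ref{defskein}d) together with the inductive hypothesis. For the generic step, I would apply \eqref{FlowVert} to peel a single $1$-labelled strand off one of the outer legs on each side, rebracket the interior using (\ref{defskein}d) and the inductive hypothesis, and collect terms; every unwanted term produced by the rebracketing contains a sub-web of the form $\FlowSkeinbigon$ and therefore vanishes by \eqref{VanishBigon}.

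The main obstacle is not the shape of the induction but the bookkeeping of the normalizing factors $1/[k]$ carried by \eqref{FlowVert}: one must check that after all reductions the quantum-integer coefficients of the surviving terms on the two sides of \eqref{FlowAssoci} coincide. This matching is exactly what the factors $1/[k]$ are designed to arrange, and carrying it through the induction is where the bulk of the computation lies.
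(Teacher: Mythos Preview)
The paper provides no proof of this proposition: it is stated with the attribution \cite[Theorem 3.6]{BERT21} and used as a black box. Your first sentence---``simply cite that reference''---is therefore exactly what the paper does, and nothing more is needed.

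Your additional self-contained sketches go beyond the paper. The argument for \eqref{VanishBigon} via the functor $\Phi$ and Schur's lemma is clean and correct over $\C(q)$; your remark about descent to general $\mathcal{R}$ is a bit loose (one should really say that the relation holds in the integral form of the web category and hence after any base change, or else carry out the diagrammatic induction you outline). The inductive plan for \eqref{FlowAssoci} is the standard one and is essentially how \cite{BERT21} proceeds; your caveat about tracking the $1/[k]$ normalizations is well placed, since that is indeed where the work lies.
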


\begin{proof}[Proof of Theorem \ref{Braiding theorem}]
Now we prove Equation \eqref{1kbrading} by induction. First, by \eqref{InitialBraid}, we know that \eqref{1kbrading} holds when $k=1$. 
Now suppose that Equation \eqref{1kbrading} holds for k, we prove that \eqref{1kbrading} holds for $k+1$ by the following calculations.

\def\APPBraidaa
{\begin{tric}
\draw  (1.5,1.5)--(-0.7,0.8)--(-1.5,1.5)
(-0.7,0.8)--(-0.7,0)node[midway,right,black,scale=0.7]{$k$}
(-1.4,-1)--(-1.5,-1.7)
(0,-1)--(1.5,-1.7);

\draw (-0.7,0)--(0,-1) node[pos=0.4,right,black,scale=0.7]{$1$}
      (0,-1)--(-1.4,-1) node[midway,below,black,scale=0.7]{$k$}
      (-1.4,-1)--(-0.7,0) node[pos=0.6,left,black,scale=0.7]{$k+1$}; 

\draw (-1.5,-1.7) node[below,black,scale=0.7]{$1$};
\draw (1.5,1.5) node[above,black,scale=0.7]{$1$};
\draw (1.5,-1.7) node[below,black,scale=0.7]{$k+1$};
\draw (-1.5,1.5) node[above,black,scale=0.7]{$k+1$};
\end{tric}}

\def\APPBraidab
{\begin{tric}
\draw  (1.5,1.5)--(-0.7,0.8)--(-1.5,1.5)
(-0.7,0.8)--(-0.7,0)node[midway,right,black,scale=0.7]{$k$}
(-1.4,-1)--(-1.5,-1.7)
(0,-1)--(1.5,-1.7);

\draw (-0.7,0)--(0,-1) node[pos=0.4,right,black,scale=0.7]{$1$}
      (0,-1)--(-1.4,-1) node[midway,below,black,scale=0.7]{$k$}
      (-1.4,-1)--(-0.7,0) node[pos=0.6,left,black,scale=0.7]{$k-1$}; 

\draw (-1.5,-1.7) node[below,black,scale=0.7]{$1$};
\draw (1.5,1.5) node[above,black,scale=0.7]{$1$};
\draw (1.5,-1.7) node[below,black,scale=0.7]{$k+1$};
\draw (-1.5,1.5) node[above,black,scale=0.7]{$k+1$};
\end{tric}}

\def\APPBraidac
{\begin{tric}
\draw  (0.7,-0.7)--(1.5,-1.5) (0.7,-0.7)--(1.5,1.5) (-0.7,0.7)--(-1.5,1.5);

\draw(-0.7,0.7)..controls(-1.2,0.2)..(-0.7,-0.3) node[midway,left,black,scale=0.7]{$k$}
(-0.7,0.7)..controls(-0.2,0.2)..(-0.7,-0.3)
node[midway,right,black,scale=0.7]{$1$};

\draw (-0.7,-0.3)--(-0.7,-1)node[midway,left,black,scale=0.7]{$k+1$}
      (-0.7,-1)--(-1.5,-1.5)
      (-0.7,-1)--(0.7,-0.7) node[midway,below,black,scale=0.7]{$k$};

\draw (-1.5,-1.5) node[below,black,scale=0.7]{$1$};
\draw (1.5,1.5) node[above,black,scale=0.7]{$1$};
\draw (1.5,-1.5) node[below,black,scale=0.7]{$k+1$};
\draw (-1.5,1.5) node[above,black,scale=0.7]{$k+1$};
\end{tric}}

\def\APPBraidad
{\begin{tric}
\draw  (1,-0.7)--(1.5,-1.5) (1,-0.7)--(1.5,1.5) (-0.7,0.7)--(-1.5,1.5);

\draw(-0.7,0.7)--(-1.2,-0.5) node[pos=0.4,left,black,scale=0.7]{$k$}
(-0.7,0.7)--(0.2,-0.5)
node[pos=0.35,right,black,scale=0.7]{$1$};

\draw (-1.2,-0.5)--(0.2,-0.5) node[midway,below,black,scale=0.7]{$k-1$}
      (-1.2,-0.5)--(-1.5,-1.5)
      (0.2,-0.5)--(1,-0.7) node[midway,above,black,scale=0.7]{$k$};

\draw (-1.5,-1.5) node[below,black,scale=0.7]{$1$};
\draw (1.5,1.5) node[above,black,scale=0.7]{$1$};
\draw (1.5,-1.5) node[below,black,scale=0.7]{$k+1$};
\draw (-1.5,1.5) node[above,black,scale=0.7]{$k+1$};
\end{tric}}

\def\APPBraidae
{\begin{tric}
\draw (-1.5,-1.5)--(-0.3,-1) (1.5,-1.5)--(0.7,-1) (-1.5,1.5)--(-0.7,1) (1.5,1.5)--(0.3,1);

\draw(-0.7,1)--(-0.3,-0.2)node[midway,left,black,scale=0.7]{$k$}
(-0.3,-0.2)--(-0.3,-1)node[midway,left,black,scale=0.7]{$k+1$};

\draw(0.7,-1)--(0.3,0.2)node[midway,right,black,scale=0.7]{$1$}
(0.3,0.2)--(0.3,1)node[midway,right,black,scale=0.7]{$2$};

\draw(0.3,0.2)--(-0.3,-0.2)node[pos=0.4,below,black,scale=0.7]{$1$}
     (-0.3,-1)--(0.7,-1)node[midway,below,black,scale=0.7]{$k$}
     (0.3,1)--(-0.7,1)node[midway,above,black,scale=0.7]{$1$}; 
     
\draw (-1.5,-1.5) node[below,black,scale=0.7]{$1$};
\draw (1.5,1.5) node[above,black,scale=0.7]{$1$};
\draw (1.5,-1.5) node[below,black,scale=0.7]{$k+1$};
\draw (-1.5,1.5) node[above,black,scale=0.7]{$k+1$};
\end{tric}}

\def\APPBraidaeCalcB
{\begin{tric}
\draw (-2,-1.5)--(-1.5,-0.7) (1.5,-1.5)--(0.7,-1) (-1.5,1.5)--(-0.7,1) (1.5,1.5)--(0.3,1);

\draw(-0.7,1)--(-1.5,-0.7)node[midway,left,black,scale=0.7]{$k$}
;

\draw(-0.2,-0.7)--(-1.5,-0.7)node[midway,below,black,scale=0.7]{$k-1$}
;

\draw(0.7,-1)--(0.3,0.2)node[midway,right,black,scale=0.7]{$1$}
(0.3,0.2)--(0.3,1)node[midway,right,black,scale=0.7]{$2$};

\draw(0.3,0.2)--(-0.2,-0.7)node[pos=0.4,left,black,scale=0.7]{$1$}
     (-0.2,-0.7)--(0.7,-1)node[midway,below,black,scale=0.7]{$k$}
     (0.3,1)--(-0.7,1)node[midway,above,black,scale=0.7]{$1$}; 
     
\draw (-2,-1.5) node[below,black,scale=0.7]{$1$};
\draw (1.5,1.5) node[above,black,scale=0.7]{$1$};
\draw (1.5,-1.5) node[below,black,scale=0.7]{$k+1$};
\draw (-1.5,1.5) node[above,black,scale=0.7]{$k+1$};
\end{tric}}

\def\APPBraidaeCalcA
{\begin{tric}
\draw (-2,-1.5)--(-1.5,-0.7) (1.5,-1.5)--(0.7,-1) (-1.5,1.5)--(-0.7,1) (1.5,1.5)--(0.3,1);

\draw(-0.7,1)--(-1.5,-0.7)node[midway,left,black,scale=0.7]{$k$}
;

\draw(-0.2,-0.7)--(-1.5,-0.7)node[midway,below,black,scale=0.7]{$k+1$}
;

\draw(0.7,-1)--(0.3,0.2)node[midway,right,black,scale=0.7]{$1$}
(0.3,0.2)--(0.3,1)node[midway,right,black,scale=0.7]{$2$};

\draw(0.3,0.2)--(-0.2,-0.7)node[pos=0.4,left,black,scale=0.7]{$1$}
     (-0.2,-0.7)--(0.7,-1)node[midway,below,black,scale=0.7]{$k$}
     (0.3,1)--(-0.7,1)node[midway,above,black,scale=0.7]{$1$}; 
     
\draw (-2,-1.5) node[below,black,scale=0.7]{$1$};
\draw (1.5,1.5) node[above,black,scale=0.7]{$1$};
\draw (1.5,-1.5) node[below,black,scale=0.7]{$k+1$};
\draw (-1.5,1.5) node[above,black,scale=0.7]{$k+1$};
\end{tric}}

\def\APPBraidaeCalcAA
{\begin{tric}
\draw (-2,-1.5)--(-1.5,-0.7) (1.8,-1.5)--(1.2,-1) (-1.5,1.5)--(-0.7,1) (1.5,1.5)--(1.2,-1);

\draw(-0.7,1)--(-1.5,-0.7)node[midway,left,black,scale=0.7]{$k$}
;

\draw(-0.2,-0.7)--(-1.5,-0.7)node[midway,below,black,scale=0.7]{$k+1$}; 

\draw(-0.7,1)--(-0.2,-0.7)
      (-0.3,0)node[black,left,scale=0.7]{$1$};

\draw (-0.2,-0.7)--(1.2,-1)node[midway,above,black,scale=0.7]{$k+2$}; 
     
\draw (-2,-1.5) node[below,black,scale=0.7]{$1$};
\draw (1.5,1.5) node[above,black,scale=0.7]{$1$};
\draw (1.8,-1.5) node[below,black,scale=0.7]{$k+1$};
\draw (-1.5,1.5) node[above,black,scale=0.7]{$k+1$};
\end{tric}}

\def\APPBraidaeCalcAB
{\begin{tric}
\draw (-2,-1.5)--(-1.5,-0.7) (1.8,-1.5)--(1.2,-1) (-1.5,1.5)--(-0.7,1) (1.5,1.5)--(1.2,-1);

\draw(-0.7,1)--(-1.5,-0.7)node[midway,left,black,scale=0.7]{$k$}
;

\draw(-0.2,-0.7)--(-1.5,-0.7)node[midway,below,black,scale=0.7]{$k+1$}; 

\draw(-0.7,1)--(-0.2,-0.7)
      (-0.3,0)node[black,left,scale=0.7]{$1$};

\draw (-0.2,-0.7)--(1.2,-1)node[midway,above,black,scale=0.7]{$k$}; 
     
\draw (-2,-1.5) node[below,black,scale=0.7]{$1$};
\draw (1.5,1.5) node[above,black,scale=0.7]{$1$};
\draw (1.8,-1.5) node[below,black,scale=0.7]{$k+1$};
\draw (-1.5,1.5) node[above,black,scale=0.7]{$k+1$};
\end{tric}}

\def\APPBraidaf
{\begin{tric}
\draw (-1.5,-1.5)--(-0.3,-1) (1.5,-1.5)--(0.7,-1) (-1.5,1.5)--(-0.7,1) (1.5,1.5)--(0.3,1);

\draw(-0.7,1)--(-0.3,-0.2)node[midway,left,black,scale=0.7]{$k$}
(-0.3,-0.2)--(-0.3,-1)node[midway,left,black,scale=0.7]{$k-1$};

\draw(0.7,-1)--(0.3,0.2)node[midway,right,black,scale=0.7]{$1$}
(0.3,0.2)--(0.3,1)node[midway,right,black,scale=0.7]{$2$};

\draw(0.3,0.2)--(-0.3,-0.2)node[pos=0.4,below,black,scale=0.7]{$1$}
     (-0.3,-1)--(0.7,-1)node[midway,below,black,scale=0.7]{$k$}
     (0.3,1)--(-0.7,1)node[midway,above,black,scale=0.7]{$1$}; 
     
\draw (-1.5,-1.5) node[below,black,scale=0.7]{$1$};
\draw (1.5,1.5) node[above,black,scale=0.7]{$1$};
\draw (1.5,-1.5) node[below,black,scale=0.7]{$k+1$};
\draw (-1.5,1.5) node[above,black,scale=0.7]{$k+1$};
\end{tric}}

\def\APPBraidafA
{\begin{tric}
\draw (-1.5,-1.5)--(-0.3,-1) (1.5,-1.5)--(0.7,-1) (-1.5,1.5)--(-0.7,1) (1.8,1.5)--(1,0.5);

\draw(-0.7,1)--(-0.3,-0.2)node[midway,left,black,scale=0.7]{$k$}
(-0.3,-0.2)--(-0.3,-1)node[midway,left,black,scale=0.7]{$k-1$};

\draw(0.7,-1)--(1,0.5)node[midway,right,black,scale=0.7]{$1$};

\draw(0.2,0.7)--(-0.3,-0.2)node[pos=0.6,right,black,scale=0.7]{$1$}
     (-0.3,-1)--(0.7,-1)node[midway,below,black,scale=0.7]{$k$}
     (0.2,0.7)--(-0.7,1)node[midway,above,black,scale=0.7]{$1$}; 

\draw     (0.2,0.7)--(1,0.5) node[pos=0.6,above,black,scale=0.7]{$2$};
     
\draw (-1.5,-1.5) node[below,black,scale=0.7]{$1$};
\draw (1.8,1.5) node[above,black,scale=0.7]{$1$};
\draw (1.5,-1.5) node[below,black,scale=0.7]{$k+1$};
\draw (-1.5,1.5) node[above,black,scale=0.7]{$k+1$};
\end{tric}}

\def\APPBraidafB
{\begin{tric}
\draw (-1.5,-1.5)--(-0.3,-1) (1.5,-1.5)--(0.7,-1) (-1.5,1.5)--(-0.7,1) (1.5,1.5)--(-0.7,1);

\draw(-0.7,1)--(-0.3,-0.2)node[midway,left,black,scale=0.7]{$k$}
(-0.3,-0.2)--(-0.3,-1)node[midway,left,black,scale=0.7]{$k-1$};

\draw(-0.3,-0.2)--(0.7,-1) node[pos=0.5,above,black,scale=0.7]{$1$}
     (-0.3,-1)--(0.7,-1)node[midway,below,black,scale=0.7]{$k$}; 
     
\draw (-1.5,-1.5) node[below,black,scale=0.7]{$1$};
\draw (1.5,1.5) node[above,black,scale=0.7]{$1$};
\draw (1.5,-1.5) node[below,black,scale=0.7]{$k+1$};
\draw (-1.5,1.5) node[above,black,scale=0.7]{$k+1$};
\end{tric}}

\def\APPBraidafC
{\begin{tric}
\draw (-1.5,-1.5)--(-0.3,-1) (1.5,-1.5)--(0.7,-1) (-1.5,1.5)--(-0.5,0.8) (1.8,1.5)--(1,0.5);

\draw
(-0.5,0.8)--(-0.3,-1)node[midway,left,black,scale=0.7]{$k-1$};

\draw(0.7,-1)--(1,0.5)node[midway,right,black,scale=0.7]{$1$};

\draw
     (-0.3,-1)--(0.7,-1)node[midway,below,black,scale=0.7]{$k$}; 

\draw     (-0.5,0.8)--(1,0.5) node[pos=0.6,above,black,scale=0.7]{$2$};
     
\draw (-1.5,-1.5) node[below,black,scale=0.7]{$1$};
\draw (1.8,1.5) node[above,black,scale=0.7]{$1$};
\draw (1.5,-1.5) node[below,black,scale=0.7]{$k+1$};
\draw (-1.5,1.5) node[above,black,scale=0.7]{$k+1$};
\end{tric}}

\def\APPBraidafD
{\begin{tric}
\draw (-1.5,-1.5)--(-0.3,-1) (1.5,-1.5)--(0.7,-1) (-1.5,1.5)--(-0.5,0.8) (1.8,1.5)--(1,0.5);

\draw
(-0.5,0.8)--(-0.3,-1)node[midway,left,black,scale=0.7]{$k-1$};

\draw(0.7,-1)--(1,0.5)node[midway,right,black,scale=0.7]{$1$};

\draw
     (-0.3,-1)--(0.7,-1)node[midway,below,black,scale=0.7]{$k$}; 

\draw     (-0.5,0.8)--(1,0.5) node[pos=0.6,above,black,scale=0.7]{$2$};
     
\draw (-1.5,-1.5) node[below,black,scale=0.7]{$1$};
\draw (1.8,1.5) node[above,black,scale=0.7]{$1$};
\draw (1.5,-1.5) node[below,black,scale=0.7]{$k+1$};
\draw (-1.5,1.5) node[above,black,scale=0.7]{$k+1$};
\end{tric}}

\begin{align} \label{PreSimplify}
   &  (-1)^k [k+1] \BraidP \stackrel{\eqref{klbraiding}}{=}  (-1)^k  \BraidPb \\
     \stackrel{ \substack{\eqref{InitialBraid} \\ \eqref{1kbrading} \\ \eqref{VanishBigon} } }{=} 
     &\frac{q^n}{[n]} \ \APPBraidaa - \frac{q^n}{[n]} \cdot \frac{q^{n-k+1}}{[n-k+1]} \ \APPBraidab  + q^{-1} \APPBraidac -q^{-2} \APPBraidad \notag \\
    &  + q^{-1} \APPBraidaeCalcB  +\frac{q^{n-k+1}}{[n-k+1]}\APPBraidaf - \APPBraidae  \notag.
\end{align}
We reduce each of the 7 diagrams appearing on the right side of \eqref{PreSimplify} to a linear combination of webs in 
\[ \left\{  \IndBraida,\IndBraidb,\IndBraidc\right\}.\] 
The first 5 diagrams reduce as follows
\begin{align*}
&\APPBraidaa  \stackrel{ \substack{\eqref{deducedskein} \\(\ref{defskein}\text{c}) \\ \eqref{FlowVert} \\ \eqref{VanishBigon} }}{=} 
\frac{[n+1]}{[n-k+1]}\IndBraidc, \quad   
\APPBraidab \stackrel{\eqref{FlowVert}}{=}
[k] \IndBraidc,  \\
&\APPBraidac \stackrel{(\ref{defskein}\text{c})}{=}[k+1] \IndBraidb,  \quad 
 \APPBraidad \stackrel{\eqref{FlowVert}}{=}[k]  \IndBraidb, \\
& \text{and} \quad \APPBraidaeCalcB
 \stackrel{ \substack{ \eqref{FlowVert} \\ \eqref{FlowAssoci} } }{=}  [2][k] \IndBraidb.\\
\end{align*}

The sixth diagram reduces as follows.

\begin{align} \label{SixthDiagram}
&\APPBraidaf
\stackrel{ \substack{ (\ref{defskein}\text{e}) \\ \eqref{VanishBigon} } }{=}  
\APPBraidafA -\frac{[n-1]}{[n]} \APPBraidafB 
\stackrel{  \eqref{FlowVert} }{=}
[2]\APPBraidafC -\frac{[n-1]}{[n]}[k]\IndBraidc  \\
 &\stackrel{ \eqref{FlowAssoci}  }{=}  
 [2]\APPBraidafB -\frac{[n-1]}{[n]}[k]\IndBraidc  \stackrel{  \eqref{FlowVert} }{=}
 \frac{[n+1]}{[n]}[k]\IndBraidc. \notag
\end{align}

To reduce the final diagram in \eqref{PreSimplify}, we begin by writing

\begin{equation}\label{final diagram}
\APPBraidae  \stackrel{ \eqref{deducedskein} }{=}
\APPBraidaeCalcA -\frac{[n-k]}{[n-k+1]} \APPBraidaf +\frac{[n-k]}{[n-k+1]} \APPBraidaeCalcB.
\end{equation}

We have already shown that the second and third diagrams in \eqref{final diagram} reduce in the following way.

\begin{align*}
&\APPBraidaf \stackrel{ \eqref{SixthDiagram} }{=} 
 \frac{[n+1]}{[n]}[k]\IndBraidc \text{ and } \APPBraidaeCalcB
 \stackrel{ \substack{ \eqref{FlowVert} \\ \eqref{FlowAssoci} } }{=}  [2][k] \IndBraidb.
\end{align*}

We reduce the first term in \eqref{final diagram} in the following way.

\begin{align*}
&\APPBraidaeCalcA \stackrel{ (\ref{defskein}\text{e}) }{=} 
\APPBraidaeCalcAA + \frac{[n-k-1]}{[n-k]}\APPBraidaeCalcAB - \frac{[n-k-1]}{[n]}\IndBraidc \\
&\stackrel{ \substack{ \eqref{FlowVert} \\ \eqref{deducedskein} \\(\ref{defskein}\text{c}) \\ \eqref{VanishBigon} } }{=} 
[k+1]\IndBraidbH+ \frac{[n-k-1]}{[n-k]}\frac{[n+1]}{[n-k+1]} \IndBraidb - \frac{[n-k-1]}{[n]}\IndBraidc \\
&\stackrel{\eqref{deducedskein}}{=} 
 [k+1] \IndBraida + \frac{[k+1][n-k-1]}{[n-k]} \IndBraidc 
     - \frac{[k+1][n-k-1]}{[n-k]} \IndBraidb \\
& \qquad  + \frac{[n-k-1]}{[n-k]}\frac{[n+1]}{[n-k+1]} \IndBraidb  - \frac{[n-k-1]}{[n]} \IndBraidc.
\end{align*}

We then replace each diagram in \eqref{PreSimplify} with the corresponding reduced form above to obtain

\begin{align*}    
    & (-1)^k [k+1] \BraidP \\
    = & \frac{q^n[n+1]}{[n][n-k+1]} \IndBraidc 
    -\frac{q^{2n-k+1}[k]}{[n][n-k+1]}\IndBraidc 
    + q^{-1}[k+1] \IndBraidb\\
     &  - q^{-2}[k]  \IndBraidb 
     + q^{-1}[2][k] \IndBraidb
     + \frac{q^{n-k+1}}{[n-k+1]}\frac{[n+1]}{[n]}[k]\IndBraidc\\
      &- [k+1] \IndBraida - \frac{[k+1][n-k-1]}{[n-k]} \IndBraidc 
     + \frac{[k+1][n-k-1]}{[n-k]} \IndBraidb \\
     & - \frac{[n-k-1]}{[n-k]}\frac{[n+1]}{[n-k+1]} \IndBraidb 
     + \frac{[n-k-1]}{[n]} \IndBraidc  \\ &
     + \frac{[n-k]}{[n-k+1]}\frac{[n+1]}{[n]}[k]\IndBraidc
      - \frac{[n-k]}{[n-k+1]}[2][k] \IndBraidb. 
\end{align*}
Finally, we collect coefficients and then verify the following identitis by Sage and hence finish the proof by induction.

\begin{align*}
    \frac{q^n[n+1]}{[n][n-k+1]}
        -\frac{q^{2n-k+1}[k]}{[n][n-k+1]}  
        + \frac{q^{n-k+1}}{[n-k+1]}\frac{[n+1]}{[n]}[k]
    - \frac{[k+1][n-k-1]}{[n-k]} 
     & \\
     + \frac{[n-k-1]}{[n]} + \frac{[n-k]}{[n-k+1]}\frac{[n+1]}{[n]}[k]   
   & = -[k+1]\left(-\frac{q^{n-k}}{[n-k]}\right) ,\\
q^{-1}[k+1] 
   - q^{-2}[k]   + q^{-1}[2][k]
   + \frac{[k+1][n-k-1]}{[n-k]} 
 & \\
    - \frac{[n-k-1]}{[n-k]}\frac{[n+1]}{[n-k+1]}
    - \frac{[n-k]}{[n-k+1]}[2][k]  
    & = -[k+1](-q^{-1}). 
\end{align*}

\end{proof}

\bibliographystyle{amsalpha}
\bibliography{references}
\end{document}